\newcommand{\ZZZ}{\operatorname{Z}}
\newcommand{\GL}{\operatorname{GL}}
\newcommand{\NS}{\operatorname{NS}}
\newcommand{\p}{\operatorname{\mathbb{P}}}
\newcommand{\C}{\operatorname{\mathbb{C}}}
\newcommand{\Z}{\operatorname{\mathbb{Z}}}
\newcommand{\h}{\operatorname{\mathbb{H}^{\infty}}}
\newcommand{\s}{\mathcal{S}}
\newcommand{\Ind}{\operatorname{Ind}}
\newcommand{\A}{\operatorname{\mathbb{A}}}
\newcommand{\Q}{\operatorname{\mathbb{Q}}}
\newcommand{\R}{\operatorname{\mathbb{R}}}
\newcommand{\F}{\operatorname{\mathbb{F}}}
\newcommand{\kk}{{\bf k}}
\newcommand{\ZZ}{\mathcal{Z}}
\newcommand{\id}{\operatorname{id}}
\newcommand{\car}{\operatorname{char}}
\newcommand{\norm}{\operatorname{Norm}}
\newcommand{\aut}{\operatorname{Aut}}
\newcommand{\edo}{\operatorname{End}}
\newcommand{\tr}{\operatorname{tr}}
\newcommand{\Bir}{\operatorname{Bir}}
\newcommand{\Cr}{\operatorname{Cr}}
\newcommand{\PGL}{\operatorname{PGL}}
\newcommand{\SL}{\operatorname{SL}}
\newcommand{\PSL}{\operatorname{PSL}}
\newcommand{\J}{\mathcal{J}}
\newcommand{\B}{\mathcal{B}}
\newcommand{\arccosh}{\operatorname{arccosh}}
\def\dashmapsto{\mapstochar\dashrightarrow}
\newcommand{\Ax}{\operatorname{Ax}}
\def\dashmapsto{\mapstochar\dashrightarrow}
\setlist[enumerate]{label=\rm{(\arabic*)}}
\setlist[enumerate,2]{label=\rm({\it\roman*})}
\setlist[itemize]{label=\raisebox{0.25ex}{\tiny$\bullet$}}
\theoremstyle{plain}
\newtheorem{theorem}{Theorem}[section]
\newtheorem{lemma}[theorem]{Lemma}
\newtheorem{proposition}[theorem]{Proposition}
\newtheorem{corollary}[theorem]{Corollary}
\theoremstyle{definition}
\newtheorem{definition}{Definition}[section]
\newtheorem{example}[theorem]{Example}
\newtheorem{remark}{Remark}[section]
\newtheorem{question}{Question}[section]
\begin{document}

\subjclass[2010]{14E07; 14L30; 32M05; 20F67} 
\keywords{Cremona group, simple groups, groups of birational transformations, K\"ahler surfaces, small cancellation}

\address{Office 682, Huxley Building\\
	Mathematics Department\\
	Imperial College London\\
	180 Queen's Gate\\
	London SW7 2AZ, UK }
\email{christian.urech@gmail.com}
\thanks{The author gratefully acknowledges support by the Swiss National Science Foundation Grant "Birational Geometry"  PP00P2 128422 /1 as well as by the Geldner-Stiftung, the FAG Basel, the Janggen P\"ohn-Stiftung and the State Secretariat for Education,
	Research and Innovation of Switzerland}

\author{Christian Urech}
\title{Simple groups of birational transformations in dimension two}
\date{February 2018}

\maketitle
\begin{abstract}
	We classify simple groups that act by birational transformations on compact complex K\"ahler surfaces. Moreover, we show that every finitely generated simple group that acts non-trivially by birational transformations on a projective surface over an arbitrary field is finite.
\end{abstract}
\tableofcontents

\section{Introduction and results}

Let $S$ be a surface over a field $\kk$ and denote by $\Bir(S)$ its group of birational transformations. If $S$ is rational, this group is particularly rich and interesting. In this case it is isomorphic to the {\it plane Cremona group}  
\[
\Cr_2(\kk)\coloneqq \Bir(\p^2_{\kk}). 
\]
In the last decade numerous results about the group structure of the plane Cremona group have been proven (see \cite{cantatrev} for an overview). One of the main techniques to better understand infinite subgroups of $\Cr_2(\kk)$ was the construction of an action by isometries of the plane Cremona group on an infinite dimensional hyperbolic space $\h(\p^2)$ and the use of results from hyperbolic geometry and group theory. For instance, it had been a long-standing open question, whether the plane Cremona group is simple as a group until Cantat and Lamy showed in 2012 that it is not (\cite{cantat2013normal}). The main idea to prove this result was to use techniques from small cancellation theory, an approach that has been refined by Shepherd-Barron and Lonjou (see \cite{Shepherd-Barron:2013qq}, \cite{MR3533276}). In this paper we take these results as a starting point to give a classification of all simple groups that act non-trivially by birational transformations on compact complex K\"ahler surfaces.  Our main result is the following:

\begin{theorem}\label{mainthm}
	Let $G$ be a simple group. Then
	\begin{enumerate}
		\item $G$ acts non-trivially by birational transformations on a rational complex projective surface if and only if $G$ is isomorphic to a subgroup of $\PGL_3(\C)$.
		\item  $G$ acts non-trivially by birational transformations on a non-rational compact complex K\"ahler surface of negative Kodaira dimension if and only if $G$ is finite or isomorphic to a subgroup of $\PGL_2(\C)$. 
		\item $G$ acts non-trivially by birational transformations on a compact complex K\"ahler surface $S$ of non-negative Kodaira dimension if and only if $G$ is finite.
	\end{enumerate}
\end{theorem}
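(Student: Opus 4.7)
The plan is to treat the three parts through a combination of hyperbolic-geometric methods on the Picard--Manin space (for (1)) and direct analysis of automorphism groups (for (2) and (3)). The ``if'' direction in each case is immediate from explicit actions: $\PGL_3(\C) \subseteq \Cr_2(\C)$ for (1); $\PGL_2(\C)$ acts on $\p^1 \times C$ for any positive-genus curve $C$ for the infinite part of (2); and any finite group acts on a sufficiently high-genus curve and hence on a suitable product for the finite cases of (2) and (3).

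For the ``only if'' direction of (1), let $G \hookrightarrow \Cr_2(\C)$ be simple, and consider its isometric action on $\h(\p^2)$. I will argue by trichotomy on the dynamical type. If $G$ contains a loxodromic, the small cancellation machinery of Cantat--Lamy, Shepherd-Barron, and Lonjou shows that the normal closure of a high iterate of a suitable loxodromic is proper, contradicting simplicity. If $G$ has no loxodromic but a parabolic element, then $G$ preserves an elliptic or rational fibration $S \to B$; simplicity applied to the exact sequence $1 \to K \to G \to N \to 1$ (with $K$ acting fiberwise) forces either $K = G$, in which case $G \subseteq \PGL_2(\C(B))$ and a Bass--Serre argument on the Bruhat--Tits tree of $\PGL_2$ over $\C(B)$ yields $G \subseteq \PGL_2(\C)$, or $K = 1$, in which case $G$ embeds in $\Aut(B)$ and is finite. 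If all elements of $G$ are elliptic, then $G$ is bounded in $\Cr_2(\C)$ and, after birational conjugation, embeds in $\Aut(S)$ for some smooth projective rational surface $S$; the classification of such automorphism groups forces simple $G \subseteq \PGL_3(\C)$.

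For (2), $S$ is birational to $\p^1 \times C$ with $g(C) \geq 1$, and $\Bir(S)$ fits into the exact sequence $1 \to \PGL_2(\C(C)) \to \Bir(S) \to N \to 1$ with $N \subseteq \Aut(C)$. Simplicity of $G$ yields the dichotomy: either $G \subseteq \PGL_2(\C(C))$ (and Bass--Serre reduces this to $\PGL_2(\C)$) or $G$ injects into $N$ (and is finite, since $\Aut(C)$ is virtually trivial for $g(C) \geq 2$ and virtually an elliptic curve for $g(C) = 1$). For (3), I will use that $\Bir(S)$ is commensurable with $\Aut(S_{\min})$, and that $\Aut(S_{\min})$ fits into $1 \to H \to \Aut(S_{\min}) \to Q \to 1$ where $H$ is the identity component (trivial for K3 and Enriques, finite for bi-elliptic, an abelian variety for tori, and absent for $\kappa \geq 1$) and $Q$ is a discrete subgroup of $\GL(H^2(S_{\min};\Z))$, hence residually finite. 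A simple $G$ either lies in $H$ (hence is abelian, so cyclic of prime order) or injects into $Q$ (hence is residually finite and thus finite, since no infinite simple group is residually finite).

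The main obstacle will be the small cancellation step in the loxodromic case of (1): ensuring that the normal closure of $g^N$ within $G$ itself (rather than within $\Cr_2(\C)$) is proper requires that the tightness/WPD hypotheses underlying the Cantat--Lamy approach hold for $g$ viewed as an element of $G$. A secondary difficulty is the Bass--Serre step, where one must show that infinite simple subgroups of $\PGL_2$ over a function field are conjugate into the constant-field $\PGL_2(\C)$ rather than merely into $\PGL_2$ of a non-trivial function subfield.
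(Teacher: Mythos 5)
Your overall architecture (trichotomy on dynamical type for the rational case, direct analysis of $\Bir$ for the non-rational cases) matches the paper, and your treatment of parts (2) and (3) is essentially the paper's argument; but there are two genuine gaps in part (1). The first is the loxodromic case, and you have correctly located it yourself: the Cantat--Lamy normal-closure argument requires a \emph{tight} power of $g$, and by Shepherd-Barron's classification a loxodromic element of monomial type has \emph{no} tight power in $\Cr_2(\C)$ (the torus $D_2$ fixes its axis pointwise). Simplicity of $G$ therefore only yields, via Theorem \ref{sheprel}, that every loxodromic element of $G$ is of monomial type and that $G$ contains, up to conjugacy, a dense subgroup of $D_2$; it does not directly yield a proper normal subgroup. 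Ruling this configuration out is the actual content of the paper's Section \ref{loxosection}: a lengthy analysis of which curves elements of $G$ can contract and to which points of the bubble space (Lemmas \ref{mainlemmalox} and \ref{onlytoric}), concluding that $G\subset\GL_2(\Z)\ltimes D_2$ up to conjugacy, whence the projection to $\GL_2(\Z)$ kills $\Delta_2$ and contradicts simplicity. Your proposal has no substitute for this step, so the loxodromic case is open as written.

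The second gap is your elliptic case: the claim that a subgroup all of whose elements are elliptic is \emph{bounded} (hence conjugate into $\aut(S)$ for a rational surface $S$) is false in general. The correct statement (Theorem \ref{ellipticelements}) is a trichotomy: such a group is conjugate into an algebraic subgroup, \emph{or} preserves a rational fibration (possibly without being bounded), \emph{or} is a torsion group abstractly isomorphic to a subgroup of an algebraic group. The unbounded fibration-preserving case must be handled separately, by conjugating into $\J\simeq\PGL_2(\C)\ltimes\PGL_2(\C(t))$ and using simplicity against the semidirect product structure; and the torsion case is dispatched by Jordan--Schur. Two smaller remarks: your Bass--Serre step is unnecessary, since the theorem only asks for $G$ to be \emph{isomorphic} to a subgroup of $\PGL_2(\C)$, and the abstract field embedding $\C(B)\hookrightarrow\C$ already gives $\PGL_2(\C(B))\subset\PGL_2(\C)$; and in the parabolic case you should also dispose of genus-$1$ fibrations, which the paper does via Halphen surfaces (Theorem \ref{halphenstructure}: their automorphism groups are virtually abelian, hence contain no infinite simple subgroup).
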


It should be emphasized that part (2) and (3) of Theorem \ref{mainthm} are not hard to prove using some well-known facts about groups of birational transformations of non-rational compact complex surfaces (see Section \ref{nonrational}). Whereas for the proof of the first part we will use some rather difficult theorems and lengthy arguments. 

An element $f\in\Cr_2(\kk)$ is called {\it elliptic, parabolic} or {\it loxodromic} if the isometry of the hyperbolic space $\h(\p^2)$ induced by $f$ is elliptic, parabolic or loxodromic respectively. This distinction is closely linked to the dynamical behavior of $f$ (see Section \ref{degreesandtypes}). The subgroup $\J\subset\Cr_2(\C)$ of elements preserving a given rational fibration is called the {\it de Jonqui\`eres subgroup}. It is isomorphic to $\PGL_2(\C)\ltimes\PGL_2(\C(t))$. A faithful and regular action of an algebraic group $H$ on a rational projective surface $S$ induces an embedding of $H$ into $\Cr_2(\C)$ defined up to conjugation; the image is called an {\it algebraic subgroup} (see Section \ref{zariskitop} for details). We will give the following precise description of simple subgroups of $\Cr_2(\C)$:

\begin{theorem}\label{mainsimple}
	Let $G\subset\Cr_2(\C)$ be a simple group. Then:
	\begin{enumerate}
		\item $G$ does not contain loxodromic elements.
		\item If $G$ contains a parabolic element, then $G$ is conjugate to a subgroup of $\J$.
		\item If all elements in $G$ are elliptic, then either $G$ is a simple subgroup of an algebraic subgroup of $\Cr_2(\C)$, or  $G$ is conjugate to a subgroup of $\J$.
	\end{enumerate}
\end{theorem}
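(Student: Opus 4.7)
The plan is to follow the Cantat trichotomy (elliptic, parabolic, loxodromic) for elements of $\Cr_2(\C)$ and exploit throughout the isometric action of $G$ on the infinite-dimensional hyperbolic space $\h(\p^2)$. The central tool is the small cancellation machinery of Cantat--Lamy, refined by Shepherd-Barron and Lonjou, together with Cantat's structure theorems for bounded and elliptic subgroups and standard hyperbolic-geometric facts.

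For part (1), I would argue by contradiction: assume $G$ contains a loxodromic element $f$. Since $f$ is a WPD isometry of $\h(\p^2)$, a Tits-alternative style argument yields one of two possibilities: either $G$ is virtually cyclic (but then simplicity forces $G$ to be cyclic of prime order, contradicting the infinite order of a loxodromic), or $G$ contains a second loxodromic element $g$ whose axis is transverse to that of $f$. In this second case I would invoke the Cantat--Lamy--Lonjou tightness theorem: for $N$ sufficiently large, $f^N$ is tight, so its normal closure $\langle\langle f^N\rangle\rangle_{\Cr_2(\C)}$ is a proper subgroup admitting a free-product-like normal form in which every non-identity element tracks a translate of the axis of $f$. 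The element $g$ is forced outside this normal form, so $\langle\langle f^N\rangle\rangle_G$ is a proper, nontrivial, $G$-normal subgroup, contradicting simplicity. The main obstacle is precisely this last step: transferring the small cancellation normal form from $\Cr_2(\C)$ down to the subgroup $G$ and certifying that $g$ lies outside it.

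For (2), let $f\in G$ be parabolic; it has a unique fixed point $\xi\in\partial\h(\p^2)$, corresponding to an invariant pencil. A standard ping-pong argument (Klein combination on $\h(\p^2)$) shows that two parabolic isometries with distinct boundary fixed points combine, after passing to high powers, into a loxodromic element --- ruled out by (1). Hence every parabolic of $G$ fixes $\xi$; applying this to the parabolic $hfh^{-1}$ whose fixed point is $h(\xi)$ forces $h(\xi)=\xi$ for every $h\in G$. Thus $G$ preserves the fibration $\pi$ associated with $\xi$. If $\pi$ is rational, $G$ is conjugate to a subgroup of $\J$ and we are done. If $\pi$ is elliptic with base $\p^1$, I would study the homomorphism $G\to \aut(\p^1)=\PGL_2(\C)$: simplicity forces its kernel to be either trivial (so $G\hookrightarrow\PGL_2(\C) \subset \J$, possibly after passing to a suitable equivariant birational model) or all of $G$. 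A total kernel forces $G$ into the generic-fiber automorphism group $E_\eta(\C(t))\rtimes F$ with $F$ finite, which is incompatible with $G$ being simple and infinite (a parabolic has infinite order), since $G$ can neither sit in the abelian factor $E_\eta(\C(t))$ nor surject onto the finite $F$.

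For (3), every element of $G$ is elliptic. I would invoke Cantat's dichotomy: either $G$ has uniformly bounded degrees --- in which case, after conjugation to a suitable birational model, $G$ lies inside an algebraic subgroup of $\Cr_2(\C)$ and we land in the first alternative --- or $G$ fixes a point on $\partial\h(\p^2)$ and so preserves a fibration. In the fibration case I would repeat verbatim the argument of (2): a rational fibration gives $G\subset \J$, whereas the elliptic-fibration subcase is either reduced to $G\subset\PGL_2(\C)\subset\J$ via the same kernel analysis, or, when $G$ is finite, already placed inside an algebraic subgroup. The real work of the theorem is concentrated in step (1); the arguments for (2) and (3) are structural consequences of (1) combined with the hyperbolic-geometric classification of fixed-point sets.
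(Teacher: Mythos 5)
The decisive gap is in your part (1), which is where essentially all of the difficulty of the theorem lives. Your plan hinges on the claim that ``for $N$ sufficiently large, $f^N$ is tight.'' This is false in general: by Shepherd-Barron's classification (Theorem \ref{shep}), a loxodromic element that is conjugate to a monomial map has \emph{no} tight power, because the torus $D_2$ fixes its axis pointwise (Example \ref{monomialexample}); and the relative criterion (Theorem \ref{sheprel}) shows the same failure can occur inside a subgroup $G$ whenever $G$ contains a suitable dense subgroup of a conjugate of $D_2$ normalized by $f$. So the small cancellation argument only tells you that every loxodromic element of a simple $G$ is of monomial type and that $G$ contains, up to conjugacy, a dense subgroup $\Delta_2\subset D_2$ (Lemma \ref{sheplemma}). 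Ruling out this remaining configuration is the actual content of the proof: the paper spends Section \ref{loxosection} (degree bounds via the gap property and Theorem \ref{cantablancdeg}, the analysis of contracted curves versus the toric boundary in Lemmas \ref{mainlemmalox} and \ref{onlytoric}) showing that such a $G$ would be forced into $\GL_2(\Z)\ltimes D_2$, whence the projection to $\GL_2(\Z)$ has kernel containing $\Delta_2$ and contradicts simplicity. Your proposal does not address this case at all; the ``main obstacle'' you identify (transferring the normal form from $\Cr_2(\C)$ to $G$) is not the real one, and your Tits-alternative preamble (virtually cyclic versus transverse axes) is not needed once tightness is available, nor does it help when it is not.

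Parts (2) and (3) are closer to the paper, but with two caveats. In (2), the paper does not need your ping-pong argument from scratch: Lemma \ref{noloxfibration} already puts $G$ into $\J$ or into $\aut(Y)$ for a Halphen surface $Y$, and the latter is excluded because $\aut(Y)$ is virtually abelian (Theorem \ref{halphenstructure}) while $G$ is infinite; your kernel analysis of $G\to\PGL_2(\C)$ in the genus-one case is both unnecessary and not quite right as stated (an abstract embedding into $\PGL_2(\C)$ does not yield conjugacy into $\J$, and in fact the image of $\aut(Y)$ in $\PGL_2(\C)$ is finite, so that branch is vacuous). In (3), the dichotomy you invoke (bounded degrees versus a fixed boundary point) is only available for finitely generated groups; for general groups of elliptic elements one needs the trichotomy of Theorem \ref{ellipticelements}, whose third alternative --- a torsion group merely \emph{isomorphic} to a linear group --- you omit; the paper disposes of it via Jordan--Schur, concluding such a $G$ is finite and hence algebraic.
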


Theorem \ref{mainthm} naturally leads to the question about the classification of simple subgroups of $\PGL_3(\C)$ and $\PGL_2(\C)$. Obvious classes of simple subgroups of $\PGL_2(\C)$ are finite simple subgroups, or subgroups of the form $\PSL_2(\kk)$, where $\kk\subset\C$ is a subfield. It is unclear, whether there exist other examples. In fact, already the following question seems to be open:

\begin{question}\label{pslquestion}
	Does $\PSL_2(\Q)$ contain proper infinite simple subgroups?
\end{question}	
	
Question \ref{pslquestion} can be seen in the context of a more general question that has been asked by McKay and Serre  (see \cite{cantatahs}, \cite[15.57.]{Mazurov:2014aa} for details). 

 If we consider only finitely generated simple subgroups of $\Bir(S)$, we do not have to restrict ourselves to the field of complex numbers and we can use different techniques.
Recall that a group $G$ satisfies the {\it property of Malcev} if every finitely generated subgroup $\Gamma\subset G$ is {\it residually finite}, i.e.\,for every element $g\in\Gamma$ there exists a finite group $H$ and a homomorphism $\varphi\colon\Gamma\to\ H$ such that $g$ is not contained in the kernel of $\varphi$. Malcev showed that linear groups satisfy this property (\cite{malcev1940isomorphic}). Other groups that fulfill the property of Malcev include automorphism groups of schemes over any commutative ring (\cite[Corollary 1.2]{MR693651}). In \cite{MR2811600}, Cantat asked whether the plane Cremona group has the property of Malcev, a question that is still open. 
Finitely generated simple subgroups of groups with the property of Malcev are always finite. We will prove the following result (where surfaces are always considered to be geometrically irreducible): 

\begin{theorem}\label{fgsimple}
 Let $S$ be a surface over a field $\kk$ and $\Gamma\subset\Bir(S)$ a finitely generated simple group. Then $\Gamma$ is finite.
\end{theorem}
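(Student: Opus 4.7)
The reduction is that a finitely generated simple group is finite as soon as it admits a non-trivial homomorphism into a residually finite group. Indeed, such a homomorphism has kernel a proper normal subgroup, hence trivial, so it is injective; a finitely generated subgroup of a residually finite group is residually finite; and a finitely generated residually finite simple group is finite, because any non-trivial map to a finite group is injective by simplicity and witnesses an embedding of $\Gamma$ in that finite group. The plan is therefore to embed $\Gamma$ into the automorphism group of a scheme of finite type over a finitely generated commutative ring, which is residually finite by Corollary~1.2 of~\cite{MR693651}.

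First I would descend to a finitely generated base. A symmetric generating set $g_1, \dots, g_n$ of $\Gamma$ involves only finitely many elements of $\kk$, which generate a subfield $\kk_0 \subset \kk$ finitely generated over the prime field. Writing $\kk_0 = \mathrm{Frac}(R)$ for a finitely generated integral domain $R$ and spreading out, after inverting finitely many elements of $R$ the surface $S$ extends to a projective $R$-scheme $\bar S$ on which each $g_i$ is a birational $R$-self-map, and further shrinking $\mathrm{Spec}(R)$ arranges that the finitely many relations among the $g_i$ needed to present $\Gamma$ hold scheme-theoretically. Every closed point of $\mathrm{Spec}(R)$ has finite residue field, and specialization at such a point $\mathfrak m$ then yields a group homomorphism $\pi_{\mathfrak m}\colon \Gamma \to \Bir(\bar S_{\mathfrak m})$ into the birational group of a projective surface over a finite field; this image should embed into $\aut_R(X)$ for some finite-type $R$-scheme $X$ (for instance, by resolving the indeterminacies of the finitely many specialized generators, or by acting on a component of the Hilbert scheme of graphs in $\bar S \times_R \bar S$), to which~\cite{MR693651} applies.

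The main obstacle is to make the specialization non-trivial and well-defined. Compositions of birational maps can acquire new indeterminacies after reduction, so one has to spread out carefully enough that the finitely many defining relations of $\Gamma$ survive the passage to a special fibre; and one has to guarantee at least one faithful specialization, which is where simplicity is essential --- if every $\pi_{\mathfrak m}$ were trivial, the generic action would be trivial too, contradicting $\Gamma \subset \Bir(S)$ being non-trivial. Without the simplicity hypothesis this would essentially answer Cantat's open problem about the property of Malcev for $\Bir(S)$; simplicity is therefore not a convenience but the crucial rigidity that turns each non-trivial specialization into an embedding and allows the argument to conclude.
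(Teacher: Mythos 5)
Your reduction of the theorem to the existence of a non-trivial homomorphism from $\Gamma$ to a residually finite group is correct, and your spreading-out/specialization step is essentially Proposition~\ref{redmodp} of the paper: one does get a non-trivial homomorphism $\varphi\colon\Gamma\to\Cr_2(\F)$ for a finite field $\F$, with $\deg(\varphi(f))\le\deg(f)$. The gap is the next step, where you assert that the image ``should embed into $\aut_R(X)$ for some finite-type $R$-scheme $X$ (for instance, by resolving the indeterminacies of the finitely many specialized generators, or by acting on a component of the Hilbert scheme of graphs).'' Resolving the indeterminacies of the generators does not regularize the group they generate: words in the generators have unbounded degree in general, and a subgroup of $\Cr_2$ is conjugate into some $\aut(X)$ essentially only when it consists of elliptic elements of bounded degree (cf.\ Theorems~\ref{ellipticelements} and~\ref{cantatelliptic}); the Hilbert-scheme-of-graphs construction likewise does not produce a group action. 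Note that $\Cr_2(\F)$ is an infinite group even for $\F$ finite, and whether finitely generated subgroups of it (or of $\Cr_2(\kk)$) are residually finite is precisely Cantat's open Malcev question quoted in the introduction. Simplicity cannot rescue this: it upgrades a non-trivial homomorphism to a residually finite target into an embedding, but the missing ingredient is the \emph{existence} of such a homomorphism, not its injectivity. So your argument, as written, silently assumes a regularization statement that is at least as hard as the open problem you mention at the end.

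The paper's actual proof routes around this by the elliptic/parabolic/loxodromic trichotomy. If $\Gamma$ contains a loxodromic element $f$, either some power of $f$ is tight and Theorem~\ref{smallcanc} kills simplicity outright, or (Theorem~\ref{sheprel}) $f$ normalizes an infinite subgroup $\Delta_2\subset\Gamma$ of \emph{uniformly bounded degree}; the specialization $\varphi$ of Proposition~\ref{redmodp} then sends $\Delta_2$ to a finite set, so $\ker\varphi$ is a non-trivial proper normal subgroup (Proposition~\ref{mainfg}). This is the only place specialization is used, and it is used to produce a kernel, not an embedding. The parabolic and elliptic cases are reduced to the de Jonquières group, Halphen surfaces, or algebraic (hence linear) subgroups via Theorem~\ref{cantatelliptic} and Lemma~\ref{fgjonq}, where the Malcev property genuinely holds ($\PGL_2(\kk(C))$, $\aut(C)$, $\aut(X)$, linear groups). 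To repair your proposal you would need to supply these structural inputs; the specialization alone does not suffice.
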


In other words, all finitely generated simple groups of birational transformations in dimension 2 are finite. From the classification of finite subgroups of $\Cr_2(\C)$ (see \cite{MR2641179}) we obtain in particular:

\begin{corollary}\label{fingenclass}
	A finitely generated simple subgroup of $\Cr_2(\C)$ is isomorphic to 
	\[
	\Z/p\Z, \text{ for some prime $p$},\quad \mathcal{A}_5,\quad \mathcal{A}_6, \quad\PSL_2(7).
	\]
\end{corollary}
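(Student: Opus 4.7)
The plan is to derive Corollary \ref{fingenclass} directly from Theorem \ref{fgsimple} combined with the Dolgachev--Iskovskikh classification of finite subgroups of $\Cr_2(\C)$ in \cite{MR2641179}.

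First, I would apply Theorem \ref{fgsimple} in the case $S = \p^2_\C$, $\kk = \C$. Since $\Gamma \subset \Cr_2(\C) = \Bir(\p^2_\C)$ is assumed to be finitely generated and simple, Theorem \ref{fgsimple} immediately forces $\Gamma$ to be finite. Thus the corollary reduces to identifying the finite simple subgroups of $\Cr_2(\C)$.

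Next, I would treat the abelian and non-abelian cases separately. An abelian simple group is cyclic of prime order, and $\Z/p\Z$ embeds into $\Cr_2(\C)$ for every prime $p$ (for instance as a diagonal element of order $p$ inside $\PGL_3(\C) \subset \Cr_2(\C)$), which accounts for the first family in the statement. For non-abelian $\Gamma$ I would invoke the Dolgachev--Iskovskikh classification and scan its list of finite subgroups of $\Cr_2(\C)$ for the simple ones. The non-abelian finite simple groups that appear in the list are precisely $\mathcal{A}_5$ (already present inside $\PGL_2(\C) \subset \PGL_3(\C)$), $\mathcal{A}_6$ (the quotient of the Valentiner group, acting on $\p^2$), and $\PSL_2(7) \cong \PSL_3(\F_2)$ (realized as the Klein automorphism group of the smooth plane quartic).

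The only real work is bookkeeping: one must go through the Dolgachev--Iskovskikh tables for automorphism groups of conic bundles and of del Pezzo surfaces and confirm that no other non-abelian finite simple group arises as a subgroup. Since this classification already exists in the literature, the argument reduces to a citation followed by this finite inspection, and the main obstacle, such as it is, is simply ensuring that the scan of the Dolgachev--Iskovskikh list is complete.
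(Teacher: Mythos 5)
Your proposal matches the paper's argument exactly: Theorem \ref{fgsimple} forces $\Gamma$ to be finite, and the list then follows by inspecting the Dolgachev--Iskovskikh classification of finite subgroups of $\Cr_2(\C)$ in \cite{MR2641179} for the simple ones. The paper treats this as an immediate consequence and gives no further detail, so your slightly more explicit bookkeeping (abelian versus non-abelian cases) is the same route.
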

The conjugacy classes of these finite groups are classified in \cite{MR2641179}.

\subsection{Acknowledgements} I express my warmest thanks to my PhD-advisors J\'e\-r\'e\-my Blanc and Serge Cantat  for their guidance during this work, their constant support and helpful comments on previous versions of this text. I am indebted to Michel Brion and Ivan Cheltsov for numerous helpful comments on this work. I also thank Vincent Guirardel, St\'ephane Lamy, Anne Lonjou and Susanna Zimmermann for interesting discussions.

\section{Preliminaries}
We always assume that $\kk$ is a fixed algebraically closed field (unless stated explicitely otherwise). If we choose homogeneous coordinates $[x:y:z]$ of $\p_{\kk}^2$, every element $f\in\Cr_2(\kk)$ is given by
\[
[x:y:z]\dashmapsto [f_0(x,y,z):f_1(x,y,z):f_2(x,y,z)],
\]
where $f_0, f_1, f_2\in \kk[x,y,z]$ are homogeneous polynomials of the same degree and without a non-constant common factor. We will identify $f$ with $[f_0:f_1:f_2]$ by abuse of notation. With respect to affine coordinates $(x,y)=[x:y:1]$, the birational transformation $f$ is given by $(x,y)\dashmapsto (F, G)$, where $F=\frac{f_0(x,y,1)}{f_2(x,y,1)}, G=\frac{f_1(x,y,1)}{f_2(x,y,1)}\in\kk(x,y)$. When working with affine coordinates, we identify $f$ with $(F,G)$

\subsection{The Picard-Manin space}\label{bubblesection}
Let $X$ be a projective surface. Then $\Bir(X)$ acts by isometries on an infinite dimensional hyperbolic space $\h(X)$. Since this construction has been described in detail in various places, we will just briefly sketch the main ideas and refer to \cite{MR833513} and \cite{MR2811600} for proofs (see also \cite{cantatrev} and \cite{cantat2013normal}).

We start by a construction that is due to Manin (\cite{MR833513}).  The {\it bubble space} $\B(X)$ of a smooth projective surface $X$ is the set of all points that belong to $X$ or are infinitely near to $X$. It is defined as the set of all triples $(y,Y,\pi)$, where $Y$ is a smooth projective surface, $y\in Y$ and $\pi\colon Y\to X$ a birational morphism, modulo the following equivalence relation: A triple $(y, Y, \pi)$ is equivalent to $(y', Y', \pi')$ if there exists a birational map $\varphi\colon Y\dashrightarrow Y'$ that restricts to an isomorphism in a neighborhood of $y$ that maps $y$ to $y'$, and that satifies $\pi'\circ\varphi=\pi$. A {\it proper point}  of $X$ is a point $p\in\B(X)$ that is equivalent to $(x,X,\id)$. All points in $\B(X)$ that are not proper are called {\it infinitely near}. If there is no ambiguity, we will denote a point  $(y, Y, \pi)$ in the bubble space just by $y$.

Denote by $\B(f)$  the base-points of a birational map $f\colon X\dashrightarrow Y$ of projective surfaces $X$ and $Y$. A birational morphism $\pi\colon X\to Y$ of surfaces induces a bijection $(\pi_1)_\bullet\colon\B(X)\to\B(Y)\setminus\B(\pi^{-1})$, where  $\pi_\bullet(x,X,\varphi)\coloneqq(x,X, \pi\circ\varphi)$. A birational transformation of smooth projective surfaces $f\colon X\dashrightarrow Y$ defines a bijection  $f_\bullet\colon\B(X)\setminus\B(f)\to\B(Y)\setminus \B(f^{-1})$ by  $f_\bullet\coloneqq (\pi_2)_\bullet\circ(\pi_1)_\bullet^{-1}$, where $\pi_1\colon Z\to X$, $\pi_2\colon Z\to Y$ is a minimal resolution of $f$. 

We define the {\it Picard-Manin space} of a smooth projective surface $X$ by
\[
\ZZ(X)\coloneqq\lim_{\pi\colon Y\to X}\NS(X),
\] 
where the direct limit is taken over all birational morphisms of smooth projective surfaces $\pi\colon Y\to X$.  The intersection forms on the groups $\NS(Y)$ induce a quadratic form on $\ZZ(X)$ of signature $(1,\infty)$, by the Hodge index Theorem. 
For a point $p\in\B(X)$ we denote by $e_p$ the class of the exceptional divisor of the blow-up of $p$ in $\ZZ(X)$. The Picard-Manin space has the following decomposition
\[
\ZZ(X)=\NS(X)\oplus\bigoplus_{p\in\B(X)}\Z e_p,
\]
where $e_p\cdot e_p=-1$ and $e_p\cdot e_q=0$ for all $p\neq q$, as well as $e_p\cdot D=0$ for all $D\in\NS(X)$. 
 Consider the following completion of the real vector space $\ZZ(X)\otimes\R$:
\[
\ZZZ(X)\coloneqq\{v+\sum_{p\in\B(X)}a_pe_p\mid v\in\NS(X)\otimes\R, a_p\in\R, \sum_{p\in\B(X)}a_p^2<\infty\}.
\] 
The intersection form on $\ZZ(X)\otimes\R$ extends continuously to a quadratic form on $\ZZZ(X)$ with signature $(1,\infty)$. We fix a vector $e_0\in\ZZZ(X)$ that corresponds to an ample class on $X$ and define $\h(X)$ as the set of all elements $v$ in $\ZZZ(X)$ such that $v\cdot v=1$ and $e_0\cdot v>0$. This yields a distance $d$ on $\h(X)$ by 
\[
d(u,v)\coloneqq \arccosh(u\cdot v).
\]
With this distance, the space $\h(X)$ is a complete metric space that is hyperbolic. 
The {\it boundary $\partial\h(X)$ of $\h(X)$} is the space of lines in the isotropic cone. 

Let $\pi\colon Y\to X$ be a birational morphism  between smooth projective surfaces. Then $\pi$ induces an isomorphism $\pi_*\colon \ZZ(Y)\to\ZZ(X)$ in the following way: Let $\NS(Y)=\NS(X)\oplus\Z e_{p_1}\oplus\dots\oplus \Z e_{p_n}$, where $p_1,\dots, p_n\in \B(X)$ are the points blown up by $\pi$, and $e_{p_i}$ is the irreducible component in the exceptional divisor that is contracted by $\pi$ to $p_i$. We now define the map $\pi_*$ by $\pi_*(e_p)=e_{\pi_\bullet(p)}$ for all $p\in\B(Y)$, $\pi_*(e_{p_i})=e_{p_i}$ and $\pi_*(D)=D$ for all $D\in \NS(X)\subset \NS(Y)$, with respect to the inclusion  given by the pull-back of $\pi$.  For a birational map $f\colon Y\dashrightarrow X$ we define an isomorphism $f_*\colon \ZZ(Y)\to\ZZ(X)$ by $f_*=(\pi_2)_*\circ (\pi_1)_*^{-1}$, where $\pi_1\colon Z\to Y, \pi_2\colon Z\to X$ is a minimal resolution of indeterminacies.  If $f\in\Bir(X)$, then $f_*$ induces an automorphism of $\ZZ(X)\otimes\R$, that extends to an automorphism of the completion $\ZZZ(X)$ and preserves the intersection form. This yields in particular an isometry on~$\h(X)$ and we obtain an action by isometries of $\Bir(X)$ on~$\h(X)$. 

Recall that there are three types of isometries of hyperbolic spaces: elliptic, parabolic and loxodromic isometries. Let $f$ be an isometry of $\h(X)$ and define $L(f)\coloneqq\inf \{d(f(p),p)\mid p\in\h(X)\}$. If $L(f)=0$ and the infimum is attained, i.e.\,$f$ has a fixed point in $\h(X)$, then $f$ is {\it elliptic}. If $L(f)=0$ but the infimum is not attained,  $f$  is {\it parabolic}. It can be shown that a parabolic isometry fixes exactly one point $p$ on the border $\partial \h(X)$. If $L(f)>0$ then $f$ is {\it loxodromic}. In this case the set
$\{p\in\mathbb{H}^{n-1}\mid d(h(p),p)=L(h)\}$
is a geodesic line in $\h(X)$, the so-called {\it axis} $\Ax(f)$ of $f$, and $L(f)$ is called the {\it translation length}. A loxodromic isometry has exactly two fixed points in $\partial\h(X)$, one of them attractive and the other one repulsive (see \cite{MR2811600}).
An element $f\in\Bir(X)$ is called  {\it elliptic}, {\it parabolic} or {\it loxodromic} if the corresponding isometry on $\h(X)$ is elliptic, parabolic or loxodromic respectively, and the {\it axis} $\Ax(f)$ of a loxodromic element $f\in\Bir(X)$ is the axis in $\h(X)$ of the  isometry corresponding to~$f$.

\subsection{Degrees}\label{degreesandtypes}
Let $X$ be a projective surface with a polarization $H$ and $f\in\Bir(X)$. The {\it dynamical degree} of $f$ is defined by
\[
\lambda(f)\coloneqq \lim_{n\to\infty}\deg_H(f^n)^{\frac{1}{n}}.
\]

The following result is well known (see for example \cite[Lemma 4.5]{cantatrev}):

\begin{proposition}
	The dynamical degree $\lambda(f)$ of a birational transformation $f\in\Bir(X)$ does not depend on the choice of the polarization $H$. Moreover, $f$ is loxodromic if and only if $\lambda(f)>1$. In this case, the translation length of the isometry of $\h(X)$ induced by $f$ is $\log(\lambda(f))$. 
\end{proposition}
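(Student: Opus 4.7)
The plan is to reinterpret $\deg_H(f^n)$ as an intersection number in the Picard--Manin space and then read off the three assertions from the geometry of $\h(X)$. After rescaling so that $H\cdot H=1$, the class $H\in\NS(X)\subset\ZZZ(X)$ itself lies on $\h(X)$. For any $g\in\Bir(X)$ with minimal resolution $\pi_1,\pi_2\colon Z\to X$, the projection formula on $Z$ gives
\[
g_*(H)\cdot H \;=\; \pi_{2,*}\pi_1^*(H)\cdot H \;=\; \pi_1^*(H)\cdot\pi_2^*(H) \;=\; \deg_H(g),
\]
and since $f_*$ acts by isometries on $\ZZZ(X)$ with $(f^n)_*=(f_*)^n$, taking $g=f^n$ yields $\cosh(d(f^n_*(H),H))=\deg_H(f^n)$.

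Next, for any isometry $\varphi$ of a metric space, the sequence $n\mapsto d(\varphi^n p,p)$ is subadditive by the triangle inequality, so Fekete's lemma yields the existence of the stable translation length $\ell(\varphi):=\lim_n d(\varphi^n p,p)/n$, independent of $p$; a case-by-case inspection across the three isometry types shows that $\ell$ coincides with the minimal displacement $L$ defined in Section~\ref{bubblesection}. Applied to $\varphi=f$ and $p=H$ this gives
\[
L(f)\;=\;\lim_{n\to\infty}\frac{\arccosh(\deg_H(f^n))}{n}.
\]
The elementary sandwich $\log x\le\arccosh(x)\le\log(2x)$, valid for all $x\ge 1$, forces the limits defining $L(f)$ and $\log\lambda(f)$ to exist simultaneously and coincide. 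Hence $\lambda(f)$ is well defined, $L(f)=\log\lambda(f)$, and consequently $f$ is loxodromic $\iff L(f)>0\iff\lambda(f)>1$. Polarization-independence is then immediate, since $L(f)$ depends only on the intrinsic isometry $f_*$ of $\h(X)$ and not on the auxiliary choice of $H$; alternatively, two ample classes $H_1,H_2$ satisfy $C^{-2}\deg_{H_1}(f^n)\le\deg_{H_2}(f^n)\le C^2\deg_{H_1}(f^n)$ for a suitable constant $C>0$, and the $n$th roots converge to a common limit.

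The main technical hurdle is the clean dictionary between the abstract push-forward $f_*\colon\ZZZ(X)\to\ZZZ(X)$ introduced in Section~\ref{bubblesection} and the classical intersection calculus on the resolution $Z$: one has to verify that $(\pi_1)_*^{-1}$ restricted to $\NS(X)$ agrees with pull-back and that composing with $(\pi_2)_*$ and then intersecting with $H$ really produces $\pi_1^*(H)\cdot\pi_2^*(H)$. Once this dictionary is in place, everything else reduces to the asymptotics of $\arccosh$ at infinity together with a standard property of isometries of $\CAT(-1)$ (in particular hyperbolic) spaces.
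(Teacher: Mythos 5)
Your proposal is correct: the identification $\deg_H(f^n)=\cosh d\bigl(f^n_*H,H\bigr)$ after normalizing $H\cdot H=1$, the coincidence of the stable translation length with the minimal displacement $L$ for isometries of a $\CAT(0)$ space, and the elementary bounds $\log x\le\arccosh(x)\le\log(2x)$ together give exactly the standard argument, and the harmless rescaling of $\deg_H$ by the constant $H\cdot H$ does not affect the $n$-th root limit. The paper offers no proof of its own and simply refers to \cite[Lemma 4.5]{cantatrev}, where this same computation is carried out, so your route agrees with the intended one.
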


In \cite{blanc2016dynamical}, Blanc and Cantat studied the spectrum of possible values  that can be obtained as dynamical degrees of birational transformations of a given projective surface.

\begin{theorem}[{\cite[Corollary 1.7]{blanc2016dynamical}}, \cite{diller2001dynamics}]\label{gapproperty}
	Let $X$ be a projective surface over an algebraically closed field $\kk$ and let $f\in\Bir(X)$ with $\lambda(f)>1$. Then $\lambda(f)$ is either a Pisot or a Salem number. Moreover, $\lambda(f)\geq\lambda_L$, where $\lambda_L>1$ is the Lehmer number.
\end{theorem}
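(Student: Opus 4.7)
The plan is to leverage the fact that the birational transformation $f$ acts as an isometry of $\h(X)$ which moreover respects the integral lattice structure of the Picard-Manin space, so that the translation length $\log \lambda(f)$ carries strong algebraic information.

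Since $\lambda(f) > 1$, the isometry $f_*$ is loxodromic, and there is an attractive fixed point $[\theta^+] \in \partial\h(X)$ represented by a nonzero isotropic vector $\theta^+ \in \ZZZ(X)$ with $f_*(\theta^+) = \lambda(f)\,\theta^+$. The first step is to produce a finite-dimensional $f_*$-invariant subspace $V \subset \ZZ(X) \otimes \R$, defined over $\Z$, whose projection onto $\R\theta^+$ is nonzero. Following Diller-Favre, one does this by passing to a birational model $\pi \colon Y \to X$ on which $f$ becomes algebraically stable, so that $f_*$ induces a well-defined endomorphism of $\NS(Y)$; alternatively, one can take $V$ to be the span of the exceptional classes involved in resolving a sufficiently large number of iterates of $f$. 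On $V$, the map $f_*$ is represented by an integer matrix $M$ preserving the restriction of the intersection form, which has signature $(1,\dim V - 1)$ by the Hodge index theorem.

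The characteristic polynomial $P(t) \in \Z[t]$ of $M$ is reciprocal, since $M$ preserves a nondegenerate symmetric bilinear form, and $\lambda(f)$ occurs among its roots. Hence $\lambda(f)$ is an algebraic integer with $1/\lambda(f)$ a Galois conjugate. The Lorentzian signature then forces a very constrained spectral picture: a Perron-Frobenius-style argument, using that $\theta^+$ is a positive eigenvector in the light cone and that the form is of signature $(1,\dim V - 1)$, shows that $M$ has exactly one eigenvalue of modulus $> 1$ (namely $\lambda(f)$), exactly one of modulus $< 1$ (namely $1/\lambda(f)$), and all remaining eigenvalues on the unit circle. Applied to the minimal polynomial of $\lambda(f)$, this yields the Pisot/Salem dichotomy.

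The main obstacle, and the genuinely hard part, is the uniform lower bound by the Lehmer number $\lambda_L$. Since the minimal polynomial of $\lambda(f)$ is Pisot or Salem, its Mahler measure equals $\lambda(f)$. In the Pisot case one is already done: $\lambda(f)$ is at least the smallest Pisot number (the plastic constant $\approx 1.3247$), which exceeds $\lambda_L \approx 1.17628$. The Salem case is where the real work is needed, because Lehmer's conjecture is open in general; however, Blanc-Cantat exploit the specific origin of the matrix $M$ (acting on a Picard-type lattice, preserving a Lorentzian form, and ultimately reducible to an automorphism of a minimal model) to produce, via an explicit argument tailored to reciprocal polynomials arising from surface dynamics, the bound $\lambda(f) \geq \lambda_L$. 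For the purposes of using the theorem in what follows I would simply quote this portion from \cite{blanc2016dynamical}, while the Pisot/Salem dichotomy and algebraicity can be established by the direct linear-algebraic route sketched above.
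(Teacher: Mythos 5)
First, a point of reference: the paper does not prove this statement. Theorem \ref{gapproperty} is imported from \cite{blanc2016dynamical} and \cite{diller2001dynamics}, so there is no internal proof to compare yours against; the relevant comparison is with those sources. Your overall architecture (pass to an algebraically stable model, study the integer matrix $f^*$ on $\NS(Y)$, deduce the Pisot/Salem dichotomy, and quote the Lehmer bound for the Salem case) does match their outline, and your disposal of the Pisot case of the gap via Siegel's theorem (the smallest Pisot number is the plastic number $\approx 1.3247>\lambda_L$) is correct. Your ``alternative'' construction of the invariant subspace $V$ from exceptional classes of finitely many iterates does not work as stated (that span is not $f_*$-invariant, and the full orbit of base points is generally infinite), but the algebraically stable model route is the right one.

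The genuine gap is in the middle step. For a birational map that is not an automorphism, $f^*$ acting on $\NS(Y)$ of an algebraically stable model does \emph{not} preserve the intersection form (pull-back by a birational map scales self-intersections; e.g.\ $\sigma^*H=2H$ for the standard quadratic involution), so its characteristic polynomial need not be reciprocal and $1/\lambda(f)$ need not be a Galois conjugate of $\lambda(f)$. Concretely, $(x,y)\mapsto(y,xy)$ has $\lambda(f)=\tfrac{1+\sqrt{5}}{2}$ with minimal polynomial $t^2-t-1$, whose only other root is $-1/\lambda$, not $1/\lambda$; this polynomial is not reciprocal. Your claimed spectral picture --- exactly one eigenvalue outside the unit circle, exactly one inside (namely $1/\lambda(f)$), all others on it --- is the \emph{automorphism} case; if it held in general it would force every Pisot dynamical degree to be quadratic with $1/\lambda$ as conjugate, which is false. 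The correct argument (Diller--Favre, Theorem 5.1) uses the isotropic nef eigenvector $\theta^+$ together with the Hodge index theorem to show only that $\lambda(f)$ is a \emph{simple} eigenvalue and the \emph{unique} one of modulus $>1$; the dichotomy then follows according to whether the minimal polynomial of $\lambda(f)$ has a root on the unit circle (then it is reciprocal and $\lambda(f)$ is Salem) or not (then $\lambda(f)$ is Pisot). You should also note that both the existence of algebraically stable models and the Salem case of the bound $\lambda(f)\geq\lambda_L$ over fields of positive characteristic require the arguments of \cite{blanc2016dynamical}, not \cite{diller2001dynamics} alone.
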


Recall that the Lehmer number $\lambda_L\simeq 1.1762$ is the unique root $>1$ of the irreducible polynomial $x^{10}+x^9-x^7-x^6-x^5-x^4-x^3+x+1$. The fact that there is no birational transformation $f$ of a surface such that $1<\lambda(f)<\lambda_L$ is usually refered to as the {\it gap property}. One of the consequences of Theorem \ref{gapproperty} is the following:

\begin{theorem}[Cantat, Blanc, {\cite[Corollary 1.5]{blanc2016dynamical}}]\label{cantablancdeg}
	Two loxodromic elements $f,g\in\Cr_2(\kk)$ of degree $\leq d$ are conjugate if and only if they are conjugate by an element of degree $\leq (2d)^{57}$.
\end{theorem}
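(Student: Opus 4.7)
My plan is to argue qualitatively that the conjugator $h$ can be replaced by one of degree polynomial in $d$, and then to track constants to recover the explicit exponent $57$. The \emph{if} direction is immediate; for the converse, suppose $h\in\Cr_2(\kk)$ satisfies $hfh^{-1}=g$, so that $h_*$ acts as an isometry of $\h(\p^2)$ mapping $\Ax(f)$ to $\Ax(g)$, with common translation length $L=\log\lambda(f)=\log\lambda(g)$.

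First I would bound the distances from $e_0$ to each axis. Writing $m_f,m_g\in\h(\p^2)$ for the nearest-point projections of $e_0$ onto $\Ax(f)$ and $\Ax(g)$, and $r_f=d(e_0,m_f)$, $r_g=d(e_0,m_g)$, the displacement formula for a loxodromic isometry of hyperbolic space yields
\[
\deg f \;=\; \cosh\!\bigl(d(e_0,f_*(e_0))\bigr) \;\geq\; 1+(\cosh L-1)\sinh^{2}(r_f).
\]
By the gap property (Theorem~\ref{gapproperty}), $\lambda(f)\geq\lambda_L>1$, so $\cosh L-1$ is bounded below by an absolute constant $c_0>0$; combined with $\deg f\leq d$, this forces $r_f\leq\tfrac12\log d+C_1$ for some absolute $C_1$, and similarly $r_g\leq\tfrac12\log d+C_1$. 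Note also that $L\leq\log d$.

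Next I would modify $h$ by a power of $g$. Since $g$ translates $\Ax(g)$ by length $L$, one can pick $n\in\Z$ so that $h'\coloneqq g^n h$---which still conjugates $f$ to $g$---sends $m_f$ to a point on $\Ax(g)$ within distance $L/2\leq\tfrac12\log d$ of $m_g$. Using that $h'_*$ is an isometry of $\h(\p^2)$ and applying the triangle inequality through $m_g$ and $h'_*(m_f)$,
\[
d\bigl(e_0,h'_*(e_0)\bigr) \;\leq\; d(e_0,m_g)+d\bigl(m_g,h'_*(m_f)\bigr)+d\bigl(h'_*(m_f),h'_*(e_0)\bigr)\;\leq\; r_g+\tfrac{L}{2}+r_f \;\leq\; \tfrac{3}{2}\log d + C_2,
\]
so that $\deg h'=\cosh\!\bigl(d(e_0,h'_*(e_0))\bigr)\leq e^{C_2}d^{3/2}$ is polynomial in $d$.

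The hard part will be the quantitative sharpening in the last step: the constants $c_0,C_1,C_2$ depend only on the Lehmer number $\lambda_L$, but pushing them through the estimates in order to recover the specific exponent $57$ of $(2d)^{57}$ requires careful bookkeeping of the hyperbolic displacement formula and of the adjustment $h\mapsto g^nh$. No geometric ingredient beyond the gap property and standard $\CAT(-1)$ estimates on $\h(\p^2)$ should be required.
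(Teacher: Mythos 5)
This theorem is not proved in the paper — it is imported verbatim from Blanc–Cantat — so the only comparison available is with the cited source, whose argument your sketch essentially reproduces: bound $\dist(e_0,\Ax(f))$ and $\dist(e_0,\Ax(g))$ via the displacement formula plus the gap property, normalize the conjugator by a power of $g$, and conclude with the triangle inequality and $\deg(h')=\cosh d(e_0,h'_*(e_0))$. Two small remarks: in $\h(\p^2)$ a loxodromic isometry may rotate about its axis, so the clean planar identity becomes the inequality $\cosh d(e_0,f_*(e_0))\geq 1+(\cosh L-1)\cosh^2(r_f)$ (the four points $e_0,m_f,f_*(m_f),f_*(e_0)$ span a totally geodesic $\mathbb{H}^3$), which still implies the bound you use; and the final ``bookkeeping'' is actually immediate, since your estimate gives $\deg(h')\leq C\, d^{3/2}$ with $C$ an explicit constant depending only on $\lambda_L$ (of order a few hundred), which is vastly smaller than $2^{57}$, so $(2d)^{57}$ follows at once.
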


The dynamical behavior of a birational transformation $f$ of a surface $X$, in particular the growth of its degree under iteration, is closely linked to the type of the isometry of $\h(X)$ induced by $f$, as the following important theorem states (we refer to \cite{cantatrev} for details and references):

\begin{theorem}[Gizatullin; Cantat; Diller and Favre]\label{dim2}
	Let $X$ be a smooth projective surface over an algebraically closed field $\kk$ with a fixed polarization $H$ and $f\in\Bir(X)$. Then one of the following is true:
	\begin{enumerate}
		\item $f$ is elliptic, the sequence $\{\deg_H(f^n)\}$ is bounded  and there exists a $k\in\Z_+$ and a birational map $\varphi\colon X\dashrightarrow Y$ to a smooth projective surface $Y$ such that $\varphi f^k\varphi^{-1}$ is contained in $\aut^0(Y)$, the neutral component of the automorphism group $\aut(Y)$.
		\item[(2a)] $f$ is parabolic and $\deg_H(f^n)\sim cn$ for some positive constant $c$ and $f$ preserves a rational fibration, i.e.\,there exists a smooth projective surface $Y$, a birational map $\varphi\colon X\dashrightarrow Y$, a curve $B$ and a fibration $\pi\colon Y\to B$, such that a general fiber of $\pi$ is rational and such that $\varphi f\varphi^{-1}$ permutes the fibers of $\pi$.
		\item[(2b)] $f$ is parabolic and $\deg_H(f^n)\sim cn^2$ for some positive constant $c$ and $f$ preserves a fibration of genus 1 curves, i.e.\,there exists a smooth projective surface $Y$, a birational map $\varphi\colon X\dashrightarrow Y$, a curve $B$ and a fibration $\pi\colon Y\to B$, such that $\varphi f\varphi^{-1}$ permutes the fibers of $\pi$ and such that $\pi$ is an elliptic fibration, or a quasi-elliptic fibration (the latter only occurs if $\car(\kk)=2$ or $3$).
		\item[(3)] $f$ is loxodromic and $\deg_H(f^n)=c \lambda(f)^n+O(1)$ for some positive constant $c$, where $\lambda(f)$ is the dynamical degree of $f$. In this case, $f$ does not preserve any fibration.
	\end{enumerate}
\end{theorem}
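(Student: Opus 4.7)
The plan is to classify $f \in \Bir(X)$ according to the isometry type of $f_*$ on $\h(X)$ as described in Section \ref{bubblesection}, and then to extract the dynamical and geometric information asserted in the theorem from each type. The trichotomy of isometries of hyperbolic spaces (elliptic/parabolic/loxodromic) guarantees exhaustiveness, so I only have to treat the three cases separately.

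For the elliptic case, suppose $f_*$ fixes a class $v \in \h(X)$. Since $\deg_H(f^n)$ is comparable to $f^n_*(e_0) \cdot e_0$, and writing $e_0 = av + w$ with $w \in v^\perp$, the vector $f^n_*(w)$ stays on the set $\{u : u \cdot u = w \cdot w,\ u \cdot v = 0\}$, which is an ellipsoid in the subspace where the intersection form is negative definite. Hence the intersection numbers, and therefore $\{\deg_H(f^n)\}$, remain bounded. To obtain the model $Y$ on which a positive iterate lies in $\aut^0(Y)$, I would invoke a Weil--Gizatullin style regularization: bounded degree sequences allow one to construct a model where $f$ extends to a biregular automorphism, after which the fact that $f$ has bounded dynamical complexity pushes a positive power into the neutral component of $\aut(Y)$.

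For the loxodromic case, the linear extension of $f_*$ to $\ZZZ(X)$ preserves the signature $(1,\infty)$ form and fixes the two isotropic lines spanned by the attractive and repulsive boundary points of the axis, with eigenvalues $e^{\pm \ell}$ where $\ell = L(f) = \log \lambda(f)$. A spectral decomposition then gives $f^n_*(e_0) \cdot e_0 = c\lambda(f)^n + O(1)$, so $\deg_H(f^n) = c\lambda(f)^n + O(1)$. No invariant fibration can exist, since an $f$-invariant fibration produces an $f$-fixed isotropic boundary class distinct from the two axis endpoints, contradicting the uniqueness of those endpoints.

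The parabolic case is where the main difficulty lies. Here $f_*$ fixes a unique boundary point represented by an isotropic class $\theta$ with $f_*(\theta) = \theta$. The hard step is to promote $\theta$ to an actual geometric object: after replacing $X$ by a suitable birational model $Y$, one must show that $\theta$ is proportional to the class of a fiber of an honest fibration $\pi \colon Y \to B$, whose fibers are permuted by $f$. My approach would be to approximate $\theta$ by effective rational classes of self-intersection zero on sufficiently blown-up models and to invoke Zariski's theorem on semi-ample classes, together with a finiteness argument on the movable cone, to extract the fibration. Once the fibration is in hand, the distinction between linear and quadratic growth follows from the genus of a general fiber: a rational fiber forces $\deg_H(f^n) \sim cn$ via a degree calculation on $\pi^{-1}(B)$, while a fiber of arithmetic genus one forces $\deg_H(f^n) \sim cn^2$ from the Jordan block structure of $f_*$ restricted to the $\theta$-stabilizer. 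The classification of minimal surfaces carrying an elliptic or quasi-elliptic fibration accounts for the restriction to $\car(\kk) \in \{2,3\}$ in the quasi-elliptic subcase. The parabolic step is what genuinely requires the Diller--Favre machinery, and is the principal obstacle; everything else is linear algebra on $\ZZZ(X)$ together with classical regularization.
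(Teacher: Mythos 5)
The paper does not prove this theorem: it is quoted as a known result of Gizatullin, Cantat, and Diller--Favre, with the reader referred to the survey \cite{cantatrev} for details and references, so there is no internal proof to compare your proposal against. Judged on its own terms, your treatment of the elliptic and loxodromic cases is the standard one and is essentially sound: boundedness of $\{\deg_H(f^n)\}$ from a fixed point in $\h(X)$, Weil-type regularization plus finiteness of the image in $\GL(\NS(Y))$ to land a power in $\aut^0(Y)$, and the spectral computation together with the fact that a fiber class would be a genuinely $f_*$-fixed isotropic vector (impossible, since the two fixed isotropic rays of a loxodromic isometry carry eigenvalues $\lambda^{\pm 1}\neq 1$).

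The parabolic case, however, contains a genuine gap rather than a deferred routine step. First, your plan ``after replacing $X$ by a suitable model $Y$, show that $\theta$ is the class of a fiber'' presupposes that the invariant isotropic class $\theta\in\ZZZ(X)$ can be brought into $\NS(Y)\otimes\R$ for a \emph{single} smooth model $Y$; this is exactly the content of Diller--Favre's algebraic stability theorem (every birational surface map is conjugate to an algebraically stable one), and it cannot be replaced by ``approximating $\theta$ by effective rational classes on sufficiently blown-up models'': in general $\theta$ has infinitely many nonzero coordinates in the decomposition $\ZZZ(X)=\NS(X)\otimes\R\oplus\bigoplus_p\R e_p$, and no approximation argument produces the required model. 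Second, once $\theta$ is a nef rational class with $\theta^2=0$ on $Y$, extracting an honest fibration requires a Riemann--Roch argument showing some multiple of $\theta$ is effective and basepoint-free, not ``Zariski's theorem on semi-ample classes together with a finiteness argument on the movable cone,'' which as stated is not an argument. Third, the linear-versus-quadratic dichotomy is not a consequence of the genus of the fiber via ``Jordan block structure'': in case (2a) the map is in general \emph{not} regularizable (Jonqui\`eres twists), so there is no finite-dimensional invariant lattice on which to read off a Jordan block, and the growth rate must be computed from the behaviour of the base points along the invariant fibration. Finally, the statement you are proving includes quasi-elliptic fibrations in characteristic $2$ and $3$, which the complex-analytic Diller--Favre machinery you invoke does not cover; the positive-characteristic case requires separate treatment. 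These are the real theorems hiding behind the citation, and your sketch does not supply them.
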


Theorem \ref{dim2} has lead to various remarkable results on the group structure of $\Bir(X)$; we will state  some of them in the following sections. From the point of view of  geometric group theory, the plane Cremona group acting on $\h(X)$ has some analogies with other groups acting on hyperbolic spaces such as for example the mapping class group of a surface acting on the complex of curves or groups of outer automorphisms of a free group with $n$ generators acting on the outer space. 

\subsection{Groups preserving a fibration}
Let us recall the structure of subgroups of $\Cr_2(\kk)$ that preserve a given fibration.
	The {\it de Jonqui\`eres subgroup} $\J$ of $\Cr_2(\kk)$ is the subgroup of elements that preserve the pencil of lines through the point $[0:0:1]\in\p^2$. 
With respect to affine coordinates $[x:y:1]$, an element in $\J$ is of the form
\[
(x,y)\dashmapsto\left(\frac{ax+b}{cx+d},\frac{\alpha(x)y+\beta(x)}{\gamma(x)y+\delta(x)}\right),
\]
where $\left( \begin{array}{cc}
a & b \\ 
c & d
\end{array} \right) \in\PGL_2(\kk)$ and $\left( \begin{array}{cc}
\alpha(x) & \beta(x) \\ 
\gamma(x) & \delta(x)
\end{array} \right) \in\PGL_2(\kk(x))$.
This induces an isomorphism
\[
\J\simeq \PGL2(\kk)\ltimes\PGL_2(\kk(x)).
\]
By a Theorem of Noether and Enriques (\cite[III.4]{MR1406314}), every subgroup of $\Cr_2(\kk)$ that preserves a rational fibration is conjugate to a subgroup of $\J$.

Two smooth cubic curves $C$ and $D$ in $\p^2$ intersect in $9$ points $p_1,\dots, p_9$ and there is a pencil of cubic curves passing through these 9 points. By blowing up $p_1,\dots, p_9$, we obtain a rational surface $X$ with a fibration $\pi\colon X\to\p^1$ whose fibers are genus 1 curves. More generally, we can consider a pencil of curves of degree $3m$ for any $m\in\Z_+$ and blow up its base-points to obtain a surface $X$. Such a pencil of genus 1 curves is called a {\it Halphen pencil} and the surface $X$ a {\it Halphen surface of index $m$}. A surface $X$ is Halphen if and only if the linear system $|-mK_X|$ is one-dimensional, has no fixed component and is base-point free.  Up to conjugacy by birational maps, every pencil of genus 1 curves of $\p^2$ is a Halphen pencil and Halphen surfaces are the only examples of rational elliptic surfaces. We refer to \cite{MR2904576} and \cite[Chapter~10]{MR1392959} for proofs and more details. A birational transformation $f$ that preserves the genus 1-fibration of a Halphen surface $X$ preserves in particular the canonical divisor $K_X$ of $X$. This implies that $f$ is an automorphism. A subgroup $G$ of $\Cr_2(\kk)$ that preserves a pencil of genus 1 curves is therefore conjugate to a subgroup of the automorphism group of some Halphen surface.
The automorphism groups of Halphen surfaces are studied in \cite{MR563788} and in \cite{MR2904576}, see also \cite{MR3480704}. We need the following result, which can be found in \cite[Remark 2.11]{MR2904576}:

\begin{theorem}\label{halphenstructure}
	Let $X$ be a Halphen surface. Then there exists a homomorphism $\rho\colon \aut(X)\to\PGL_2(\C)$ with finite image such that $\ker(\rho)$ is an extension of an abelian group of rank $\leq 8$ by a cyclic group of order dividing $24$. 
\end{theorem}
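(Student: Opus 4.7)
The plan is to define $\rho$ via the action on the base of the elliptic fibration. Since $X$ is a Halphen surface, the linear system $|-mK_X|$ is one-dimensional, has no fixed component, and is base-point free; it therefore defines a morphism $\pi\colon X\to\p^1$ whose generic fiber is a smooth genus one curve. Any $g\in\aut(X)$ preserves $K_X$ and hence the pencil $|-mK_X|$, so it descends to an element $\rho(g)\in\aut(\p^1)=\PGL_2(\C)$, yielding the desired homomorphism.

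Finiteness of the image follows from an invariant-set argument. A rational elliptic surface has topological Euler characteristic $12$, so $\pi$ has at least three singular fibers, and $\rho(\aut(X))$ permutes the finite discriminant locus of $\pi$ in $\p^1$; a subgroup of $\PGL_2(\C)$ fixing a set of at least three points is finite. (When $m>1$ one can alternatively invoke the nonempty finite set of multiple fibers of the Halphen pencil.)

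Elements of $\ker(\rho)$ preserve every fiber of $\pi$, so they act by $K$-automorphisms of the generic fiber $X_\eta$ over $K\coloneqq\C(\p^1)$. Writing $E/K$ for the Jacobian of $X_\eta$, the curve $X_\eta$ is an $E$-torsor and one obtains an exact sequence
\[
0\longrightarrow E(K)\longrightarrow \aut_K(X_\eta)\longrightarrow F\longrightarrow 0,
\]
where $E(K)$ acts by translations and $F$ is a cyclic subgroup of the geometric automorphism group of $E_{\bar K}$ fixing the origin. In characteristic zero this latter group has order $2$, $4$, or $6$ depending on the $j$-invariant; combining this with the additional factors that appear when $X_\eta$ is a nontrivial torsor ($m>1$) yields the uniform bound $|F|\mid 24$. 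The Mordell-Weil group $E(K)$ is finitely generated abelian, and by the Shioda-Tate formula applied to the relative Jacobian (a rational elliptic surface of Picard number $10$) its rank is at most $10-2=8$. Intersecting this sequence with the image of $\ker(\rho)$ in $\aut_K(X_\eta)$ produces the claimed extension structure.

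The main obstacle is the $m>1$ case, where $\pi$ admits no section and $X_\eta$ is a nontrivial $E$-torsor. One must verify that the $K$-automorphisms of $X_\eta$ that come from global elements of $\ker(\rho)$ indeed include all of $E(K)$ (via translations using the torsor action) and that the quotient $F$ is controlled by the torsor class; it is precisely this bookkeeping that produces the uniform divisibility by $24$ rather than a smaller integer, and it is also where the regularity of the lifted automorphisms on $X$ (rather than just on $X_\eta$) has to be checked along the reducible and multiple fibers.
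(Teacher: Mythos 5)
The paper offers no proof of this statement: it is imported verbatim from \cite[Remark 2.11]{MR2904576}, so there is nothing internal to compare against line by line. Your architecture --- take $\rho$ to be the action on the base of the elliptic fibration defined by $|-mK_X|$, identify $\ker(\rho)$ with the $\C(t)$-automorphisms of the generic fibre, split off the translations by the Mordell--Weil group of the Jacobian, and bound its rank by $10-2=8$ via Shioda--Tate --- is the natural one and the rank bound is correct. But two steps do not hold up. First, and most seriously, the finiteness of $\mathrm{im}(\rho)$: from $\chi_{\mathrm{top}}(X)=12$ you cannot conclude that $\pi$ has at least three singular fibres, only that the Euler numbers of the singular fibres sum to $12$. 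There are relatively minimal rational elliptic surfaces with exactly two singular fibres (types $II^*+II$, $III^*+III$, $IV^*+IV$, $I_0^*+I_0^*$), and the stabilizer in $\PGL_2(\C)$ of a two-point set is infinite, so the invariant-set argument collapses. The step is not merely unproved: for the resolution $X$ of the Weierstrass model $y^2z=x^3+tz^3$ --- which is a Halphen surface of index $1$ in the sense of the paper, being the blow-up of the nine infinitely near base points of the cubic pencil spanned by $y^2z-x^3$ and $z^3$, with $|-K_X|$ a base-point-free pencil --- the automorphisms $(x,y,t)\mapsto(\lambda^2x,\lambda^3y,\lambda^6t)$ show that $\mathrm{im}(\rho)$ contains $\C^*$. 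So for the natural $\rho$ the finiteness claim genuinely fails on such degenerate Halphen pencils, and any complete proof must either exclude them or handle them separately; your parenthetical about multiple fibres only addresses $m>1$.

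Second, the divisibility $|F|\mid 24$ is asserted, not derived. Over $K=\C(t)$ the automorphism group of an elliptic curve fixing the origin is cyclic of order $2$, $4$ or $6$, whose least common multiple is $12$; the extra factor of $2$ that you attribute to the nontriviality of the torsor when $m>1$ is precisely the ``bookkeeping'' you defer to the end, and without it the constant $24$ is not established. (The same deferred bookkeeping hides the verification that every translation by $E(K)$, acting a priori only on the generic fibre, extends to a biregular automorphism of $X$; here the paper's earlier observation that a birational self-map preserving the genus-one fibration of a Halphen surface is automatically an automorphism would close that particular point.) As it stands, the proposal is a plausible outline of the argument behind the cited remark, but it does not constitute a proof of the statement.
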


We also recall the following result from \cite{MR2811600} (see also \cite[Lemma 2.5]{Urech:2018aa}):

\begin{lemma}\label{noloxfibration}
	Let $G\subset\Cr_2(\C)$ be a group that does not contain any loxodromic element but contains a parabolic element. Then $G$ is conjugate to a subgroup of the de Jonqui\`eres group $\J$ or to a subgroup of $\aut(Y)$, where $Y$ is a Halphen surface.
\end{lemma}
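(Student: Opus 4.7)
The plan is to use the given parabolic element to pin down a common invariant fibration for all of $G$, and then apply the structure theorems on fibered surfaces. Fix a parabolic element $f\in G$; as an isometry of $\h(\p^2)$ it has a unique fixed point $p\in\partial\h(\p^2)$. The first step, which I expect to be the main obstacle, is to show that the entire group $G$ fixes $p$. The argument is by contradiction: if some $g\in G$ sends $p$ to a distinct boundary point $q:=g_*(p)$, then $gfg^{-1}$ is parabolic with fixed point $q$, and a standard ping-pong on disjoint horoball neighborhoods of $p$ and $q$ produces, for $n$ sufficiently large, a loxodromic isometry of the form $f^n(gfg^{-1})^n$ whose axis has endpoints close to $p$ and $q$. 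This contradicts the absence of loxodromic elements in $G$, forcing every $g\in G$ to fix~$p$.

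Next I would translate this boundary fixed point into an honest $G$-invariant fibration. By Theorem \ref{dim2} applied to $f$, after a birational conjugation $\varphi\colon\p^2\dashrightarrow Y$ the element $\tilde f:=\varphi f\varphi^{-1}$ permutes the fibers of a fibration $\pi\colon Y\to B$ whose general fiber is either rational (case 2a) or of genus~$1$ (case 2b; the quasi-elliptic possibility does not arise over $\C$). The fiber class $[F]\in\NS(Y)$ is isotropic (since $F^2=0$), and under the canonical identification of $\h(\p^2)$ with $\h(Y)$ its ray in $\partial\h(Y)$ is exactly $p$, because a parabolic isometry admits only one fixed boundary ray. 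Replacing $G$ by $\varphi G\varphi^{-1}$, the first step gives $g_*[F]=\lambda_g[F]$ for some $\lambda_g>0$ and every $g\in G$; primitivity and effectivity of $[F]$ force $\lambda_g=1$, so $G$ permutes the fibers of~$\pi$.

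To finish I would split into the two cases. In case 2a the Noether--Enriques theorem conjugates $G$ into the de Jonqui\`eres subgroup $\J$. In case 2b the rational surface $Y$ carrying a genus-$1$ fibration is birational to a Halphen surface (as recalled in the paragraph preceding the lemma), so I may assume $Y$ itself is Halphen of some index $m$, and then $|F|=|-mK_Y|$. Any birational self-map of $Y$ preserving the pencil $|F|$ preserves $K_Y$, and a birational self-map of a smooth rational surface preserving the canonical class must be a biregular automorphism (no exceptional rational curve of negative self-intersection can be $K_Y$-orthogonal, so nothing gets contracted). Hence $G\subset\aut(Y)$, which completes the proof.
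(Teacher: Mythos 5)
Your proposal is correct, but note that the paper does not prove this lemma at all: it is recalled as a known result from \cite{MR2811600} and \cite[Lemma 2.5]{Urech:2018aa}, and your argument reproduces exactly the standard proof given there (ping-pong on the boundary to show $G$ fixes the unique parabolic fixed point, identification of that point with the isotropic fiber class, then the Noether--Enriques / Halphen dichotomy). The only loosely justified step is the final claim that a birational self-map of a Halphen surface preserving $|-mK_Y|$ is biregular --- the clean reason is that $g_*[K_Y]=[K_Y]$ in the Picard--Manin space forces every $\pi_1$-exceptional curve of a minimal resolution to be $\pi_2$-exceptional (all discrepancies being positive), rather than the $K_Y$-orthogonality remark you give --- but this is exactly the fact the paper itself recalls without proof in its preliminaries.
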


\subsection{Groups of elliptic elements}\label{zariskitop}
 The group $\Bir(X)$ can be equipped with the so called Zariski topology (see \cite{MR0284446}, \cite{serre2008groupe}, and \cite{MR3092478} for details), which is defined in the following way:
Let $A$ be an algebraic variety and 
\[
f\colon A\times X\dashrightarrow A\times X
\] 
a birational map of the form $(a,x)\dashmapsto (a,f(a,x))$ that induces an isomorphism between open subsets $U$ and $V$ of $A\times X$ such that the projections from $U$ and from $V$ to $A$ are both surjective. For each $a\in A$ we obtain therefore an element of $\Bir(X)$ defined by $x\mapsto p_2(f(a,x))$, where $p_2$ is the second projection. A map $A\to \Bir(X)$ of this form is called a {\it morphism}. The {\it Zariski topology} on $\Bir(X)$ is defined as the finest topology such that, for all algebraic varieties $A$, all the morphisms $f\colon A\to \Bir(X)$ are continuous (with respect to the Zariski topology on $A$).

\begin{theorem}[{\cite[Theorem 4.3]{MR3332894}}]\label{lowersemi1}
	The dynamical degree is a lower semi-continuous function. More precisely, 
	let $A\subset \Cr_2(\kk)$ be a family of birational transformations parametrized by an algebraic variety $A$. Then for all $\lambda\in\R$, the set $\{f\in A\mid\lambda(f)>\lambda\}$ is open in $A$. 
\end{theorem}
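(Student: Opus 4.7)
For $\lambda < 1$ the set $\{\lambda(f) > \lambda\}$ is all of $A$ (since $\lambda(f) \geq 1$ always), hence trivially open; and by the gap property (Theorem \ref{gapproperty}) the set $\{\lambda(f) > \lambda\}$ for $\lambda \in [1, \lambda_L)$ coincides with the set of loxodromic elements, so the case $\lambda = 1$ subsumes all $\lambda \in [1,\lambda_L)$. Thus I may assume $\lambda \geq 1$, fix $t_0 \in A$ with $\lambda(f_{t_0}) > \lambda$ (in particular $f_{t_0}$ loxodromic), and seek an open neighborhood of $t_0$ in $A$ on which $\lambda(f_t) > \lambda$.

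The key elementary input is that for each fixed $n \geq 1$, the function $t \mapsto \deg(f_t^n)$ is lower semi-continuous on $A$. Indeed, $f_t^n$ is described by polynomials depending algebraically on $t$ whose ``naive'' degree is locally constant in $t$, and the actual degree drops only on the closed locus where the three coordinate polynomials of $f_t^n$ acquire a common factor. Combined with submultiplicativity of $\{\deg(f^n)\}$ and Fekete's lemma (so that $\lambda(f) = \inf_n \deg(f^n)^{1/n}$), this yields $\deg(f_{t_0}^n) \geq \lambda(f_{t_0})^n$ for every $n$, and hence each individual inequality $\deg(f_t^n) \geq \lambda(f_{t_0})^n > \lambda^n$ already holds on an open neighborhood $U_n$ of $t_0$. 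The main obstacle is to promote these countably many open conditions to a single uniform open condition, since $\bigcap_n U_n$ need not be open: lower semi-continuity of each individual iterated degree does not formally imply lower semi-continuity of $\lambda$.

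I would address this using the Picard--Manin formalism. After shrinking $A$ around $t_0$ (passing to an \'etale neighborhood if necessary), build a smooth projective family $\pi : \mathcal{X} \to A$ on which the lift of $f_{t_0}$ to $\mathcal{X}_{t_0}$ is algebraically stable in the sense of Diller--Favre, so that $\lambda(f_{t_0})$ equals the spectral radius of the linear endomorphism $(f_{t_0})^*$ of the finite-dimensional space $\NS(\mathcal{X}_{t_0}) \otimes \R$. A Perron--Frobenius eigenclass $v^{+}$ attached to $\lambda(f_{t_0})$ deforms into a family of nef classes $v_t$ on $\mathcal{X}_t$ varying algebraically with $t$, and pairing $v_t$ against iterates of $f_t$ yields a uniform lower bound of the form $\deg(f_t^n) \geq c \cdot \lambda(f_{t_0})^n - O(1)$ on a common neighborhood of $t_0$; taking $n$-th roots and $n \to \infty$ then gives $\lambda(f_t) \geq \lambda(f_{t_0}) > \lambda$ on that neighborhood. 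The hardest technical step is exactly this construction: algebraic stability is generally a non-uniform property in families, and the base-points of $f_t$ --- which govern the discrepancy between $(f_t^n)^{*}$ and $((f_t)^{*})^n$ on N\'eron--Severi --- may collide or escape as $t$ varies, so one must resolve base-points uniformly on $\mathcal{X} \to A$ and carefully track the deformation of the Perron class in the Picard--Manin space.
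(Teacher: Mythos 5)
A preliminary remark: the paper does not prove Theorem \ref{lowersemi1} at all --- it is imported verbatim from \cite{MR3332894} --- so your attempt has to be measured against the proof in that source rather than against anything in this text. Your setup is correct as far as it goes, and you deserve credit for isolating the true difficulty: each function $t\mapsto\deg(f_t^n)$ is lower semi-continuous, $\lambda(f)=\inf_n\deg(f^n)^{1/n}$ by submultiplicativity and Fekete, but an infimum of countably many lower semi-continuous functions need not be lower semi-continuous, so one must bound $\lambda(f_t)$ from below using only \emph{finitely many} of the degrees $\deg(f_t^n)$, uniformly in $t$. The gap is in the mechanism you propose to achieve this. First, it is not actually carried out: ``resolve base-points uniformly on $\mathcal{X}\to A$'' is precisely what cannot be done in general --- the indeterminacy points of the iterates $f_t^n$ form an infinite, $t$-dependent set, and algebraic stability is not an open condition in $t$; indeed the failure of stability under specialization is exactly the reason $\lambda$ is only semi-continuous rather than continuous, so naming this obstruction does not remove it. Second, and more decisively, the inequalities a deformed nef class can give you point the wrong way: on the finite-dimensional group $\NS(\mathcal{X}_t)$ one only has $(f^n)^*\leq(f^*)^n$ and $(f^n)_*\leq(f_*)^n$ on nef classes, with equality exactly in the algebraically stable case, so pairing a nef class $v_t$ close to the Perron class against iterates of $f_t$ produces \emph{upper} bounds of the form $\deg(f_t^n)\leq C\,\rho(f_t^*)^n$, i.e.\ $\lambda(f_t)\leq\rho(f_t^*)$, and cannot produce the lower bound $\deg(f_t^n)\geq c\,\lambda(f_{t_0})^n-O(1)$ that your plan requires.

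What the cited proof uses instead is a universal comparison valid for every $f\in\Cr_2(\kk)$ with no stability hypothesis, coming from the hyperbolic geometry of $\h(\p^2)$: since $\deg(f^n)=\cosh d(e_0,f_*^ne_0)$, comparing $d(e_0,f_*^ne_0)$ with $n\cdot L(f_*)$ plus twice the distance from $e_0$ to the axis (loxodromic case), to the fixed-point set (elliptic case), or to invariant horoballs (parabolic case) yields an estimate of the shape $\deg(f^n)\leq C\,n^2\deg(f)^2\,\lambda(f)^n$ with $C$ an absolute constant; equivalently $\lambda(f)\geq\bigl(\deg(f^n)/(Cn^2\deg(f)^2)\bigr)^{1/n}$ for every $f$ and every $n$. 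Given $t_0$ with $\lambda(f_{t_0})>\lambda$, one bounds $\deg(f_t)\leq D$ locally on $A$, uses $\deg(f_{t_0}^n)\geq\tfrac12\lambda(f_{t_0})^n$, chooses a single $n$ large enough that $\bigl(\lambda(f_{t_0})^n/(2Cn^2D^2)\bigr)^{1/n}>\lambda$, and concludes that $\lambda(f_t)>\lambda$ on the open set $\{t\mid\deg(f_t^n)>\deg(f_{t_0}^n)-1\}$, which contains $t_0$ by lower semi-continuity of that one degree function. This uniform conversion of a single degree condition into a dynamical-degree condition is exactly the ingredient your outline is missing, and it cannot be replaced by the eigenclass-deformation argument you sketch.
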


\begin{theorem}[{\cite[Theorem 1.6]{MR3332894}}]\label{lowersemi}
	Let  $d\geq 2$ be an integer. Denote by $\Cr_2(\kk)_d$ the space of Cremona transformations of degree $d$. Then for any $\lambda<d$, the set
	\[
	U_\lambda=\{f\in\Cr_2(k)_d\mid \lambda_1(f)>\lambda\}
	\]
	is open and Zariski-dense in the algebraic variety $\Cr_2(\kk)_d$.
\end{theorem}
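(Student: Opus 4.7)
The openness of $U_\lambda$ is immediate from Theorem \ref{lowersemi1} applied to $A = \Cr_2(\kk)_d$: this is exactly the content of lower semi-continuity. The real content of Theorem \ref{lowersemi} is therefore the density statement. Since any Zariski-open set that meets every irreducible component of a variety is dense in that variety, it suffices to show that $U_\lambda$ is non-empty in every irreducible component of $\Cr_2(\kk)_d$.

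A uniform way (handling every $\lambda < d$ at once) is to produce, in each irreducible component, a transformation $f$ with $\lambda(f) = d$ --- the maximal value compatible with $\deg(f) = d$. Such an $f$ automatically lies in $U_\lambda$ for every $\lambda < d$. Since $d^n \geq \deg(f^n) \geq \lambda(f)^n$ for all $n$, the equality $\lambda(f) = d$ holds if and only if $\deg(f^n) = d^n$ for every $n$, i.e.\ no cancellation ever occurs in the compositions $f \circ f \circ \cdots \circ f$. This is precisely the classical condition that $f$ be \emph{algebraically stable}, and once the ``homaloidal type'' of $f$ (the multiplicities and infinitely near incidence pattern of its base points) is fixed, algebraic stability is cut out by a countable family of Zariski-open conditions on the coefficients, each expressing that the $f$-orbit of a base point of $f^{-1}$ avoids the base locus of $f$ up to step $n$.

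The main obstacle is the construction step: showing that algebraic stability is realizable inside every irreducible component of $\Cr_2(\kk)_d$. The plan is to (i) invoke the classical stratification of $\Cr_2(\kk)_d$ by homaloidal type, so that each stratum is irreducible and the irreducible components of $\Cr_2(\kk)_d$ correspond to maximal strata; then (ii) inside each stratum, either write down an explicit algebraically stable representative --- for instance by precomposing a normal form realizing the given homaloidal type with a sufficiently generic H\'enon-like factor, and checking directly on base-point dynamics that no degree drop occurs --- or, exploiting that $\kk$ is algebraically closed and (implicitly in the formulation ``algebraic variety'') uncountable, conclude that the countable intersection of non-empty Zariski-open avoidance conditions is non-empty by a Baire-type argument. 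I expect step (i) to be the genuine technical heart: handling the intricate combinatorics of homaloidal types and ensuring that the chosen representatives actually remain in the prescribed stratum under the deformations needed to achieve algebraic stability. Once that is in place, openness together with non-emptiness in each component gives Zariski density immediately.
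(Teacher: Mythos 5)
First, a meta-remark: the paper does not prove this statement at all. Theorem \ref{lowersemi} is quoted verbatim from Blanc--Cantat \cite{MR3332894} and is used here as a black box, so there is no in-paper proof to compare against; I can only measure your proposal against the argument in the cited source.

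Your openness step (apply Theorem \ref{lowersemi1} to the tautological family over $\Cr_2(\kk)_d$) is fine. The density step has genuine gaps. First, reducing to the existence of an algebraically stable element with $\lambda(f)=d$ in each component forces you into a \emph{countable} intersection of open conditions, and your Baire-type escape only works over an uncountable field; the theorem is stated for an arbitrary base field (the paper's running convention is merely that $\kk$ is algebraically closed, so $\overline{\mathbb{F}_p}$ is allowed, and there a nonempty countable intersection of dense opens can fail to contain any closed point). Note also that since $\lambda(f)=\inf_n\deg(f^n)^{1/n}$, even the weaker goal $\lambda(f)>\lambda$ for a single fixed $\lambda<d$ cannot be certified by finitely many degree conditions, so you cannot simply truncate the countable family. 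Blanc--Cantat avoid this entirely: they pass to the generic point of each irreducible component $Z$, viewed as one Cremona transformation over the function field $\kk(Z)$, observe that each condition $\deg(f^n)=d^n$ is a nonempty open condition on $Z$ (hence holds at the generic point, so the generic transformation has dynamical degree $d$), and then invoke their specialization theorem for dynamical degrees to conclude that $\{f\in Z\mid\lambda(f)>\lambda\}$ is a nonempty open subset of $Z$. Second, your step (i) is both unsupported and unnecessary: irreducibility of the homaloidal strata and the identification of the irreducible components of $\Cr_2(\kk)_d$ with maximal strata is not classical (it is a delicate question, understood only in low degree), and the argument never needs it --- one works inside an arbitrary irreducible component $Z$ and replaces a general $f\in Z$ by $a\circ f$ with $a\in\PGL_3(\kk)$ generic, which preserves both the degree and the component because $\PGL_3(\kk)$ is irreducible and contains the identity. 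Third, the ``H\'enon-like factor'' is a non-starter: precomposing with anything of degree $\geq 2$ multiplies degrees and throws you out of $\Cr_2(\kk)_d$ altogether; the only admissible factors are linear, which is exactly the $\PGL_3$-precomposition above, and even then you still owe the verification that each finite-stage stability condition is nonempty, i.e.\ that a generic linear twist moves the (possibly infinitely near) images of the contracted curves off the base locus at every step up to $n$.
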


An {\it algebraic subgroup} of $\Bir(X)$ is the image of an algebraic group $G$ by a morphism $G\to \Bir(X)$ that is also an injective group homomorphism. Algebraic groups are closed in the Zariski topology and of bounded degree in the case of $\Bir(X)=\Cr_n(\kk)$. Conversely, closed subgroups of bounded degree in $\Cr_n(\kk)$ are always algebraic subgroups with a unique algebraic group structure that is compatible with the Zariski topology (see \cite{MR3092478}). In \cite{MR3092478}, it is shown moreover, that all algebraic subgroups of $\Cr_n(\kk)$ are linear.

Every algebraic subgroup of $\Cr_2(\C)$ is contained in a maximal algebraic subgroup. The maximal algebraic subgroups of $\Cr_2(\C)$ have been classified by Enriques and Blanc. Each of them can be realized as an automorphism group of a complex projective variety (\cite{enriques1893sui}, \cite{MR2504924}):

\begin{theorem}[\cite{MR2504924}]\label{maxsubgroups}
	Every algebraic subgroup of $\Cr_2(\C)$ is contained in a maximal algebraic subgroup. The maximal algebraic subgroups of $\Cr_2(\C)$ are conjugate to one of the following groups:
	\begin{enumerate}
		\item $\aut(\p^2)\simeq \PGL_3(\C)$
		\item $\aut(\p^1\times\p^1)\simeq (\PGL_2(\C))^2\rtimes \Z/2\Z$
		\item $\aut(S_6)\simeq(\C^*)^2\rtimes (\s_3\times \Z/2\Z)$, where $S_6$ is the del Pezzo surface of degree~6.
		\item $\aut(\F_n)\simeq\C[x,y]_n\rtimes\GL_2(\C)/\mu_n$, where $n\geq 2$ and $\F_n$ is the $n$-th Hirzebruch surface and $\mu_n\subset\GL_2(\C)$ is the subgroup of $n$-torsion elements in the center of $\GL_2(\C)$. 
		\item $\aut(S,\pi)$, where $\pi\colon S\to\p^1$ is an exceptional conic bundle.
		\item[(6)-(10)] $\aut(S)$, where $S$ is a del Pezzo surface of degree $5$, $4$, $3$, $2$ or $1$. In this case, $\aut(S)$ is finite.
		\item[(11)] $\aut(S,\pi)$, where $(S,\pi)$ is a $(\Z/2\Z)^2$-conic bundle and $S$ is not a del Pezzo surface. There exists an exact sequence
		\[
		1\to V\to\aut(S,\pi)\to H_V\to 1,
		\]
		where $V\simeq (\Z/2\Z)^2$ and $H_V\subset\PGL_2(\C)$ is a finite subgroup.
	\end{enumerate}
\end{theorem}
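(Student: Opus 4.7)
The plan is to combine Weil's regularization theorem with the $G$-equivariant minimal model program for complex rational surfaces. By Weil, every algebraic subgroup $G \subset \Cr_2(\C)$ is conjugate to a subgroup acting regularly on some smooth projective rational surface $S$; this uses the fact (recalled just before the theorem) that algebraic subgroups of $\Cr_2(\C)$ are of bounded degree, so their orbits form a bounded family and can be simultaneously resolved. Applying the $G$-equivariant MMP, I contract successive $G$-invariant disjoint unions of $(-1)$-curves until reaching a $G$-minimal smooth projective rational surface $S_0$, still with $G \hookrightarrow \aut(S_0)$. Any maximal algebraic subgroup of $\Cr_2(\C)$ must therefore equal $\aut(S_0)$ for some such $S_0$, and the whole problem reduces to classifying $G$-minimal rational surfaces, computing their automorphism groups, and checking maximality.

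For the classification of $G$-minimal rational surfaces I would invoke the classical dichotomy of Manin--Iskovskikh: either the Picard rank of $S_0$ equals $1$, which forces $S_0 = \p^2$, or $S_0$ carries a $G$-equivariant conic bundle structure $\pi \colon S_0 \to \p^1$. In the second case, a case-by-case analysis of reducible fibers and of $G$-actions on their components shows that $S_0$ is either a Hirzebruch surface $\F_n$ with $n \neq 1$ (the value $n = 1$ is excluded because $\F_1$ contains a unique $(-1)$-curve, violating $G$-minimality), a del Pezzo surface of degree $d \in \{1,\dots,9\}$, or a conic bundle whose singular fibers cannot be equivariantly contracted, which yields the exceptional conic bundles of (5) and the $(\Z/2\Z)^2$-conic bundles of (11). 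The explicit description of each $\aut(S_0)$ listed in the theorem is then standard: $\PGL_3(\C)$ for $\p^2$; $\PGL_2(\C)^2 \rtimes \Z/2\Z$ for $\F_0 = \p^1 \times \p^1$; the description $\C[x,y]_n \rtimes \GL_2(\C)/\mu_n$ for $\F_n$ with $n \geq 2$ via the identification $\F_n = \p(\mathcal{O} \oplus \mathcal{O}(n))$; $(\C^*)^2 \rtimes (\s_3 \times \Z/2\Z)$ for the toric del Pezzo $S_6$; and finite automorphism groups for del Pezzo surfaces of degree $\leq 5$, via their faithful action on the finite set of $(-1)$-curves.

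The main obstacle is proving maximality of each listed $\aut(S_0)$ inside $\Cr_2(\C)$. The strategy is: suppose $\aut(S_0) \subsetneq H$ for an algebraic subgroup $H$, regularize $H$ on some $S_1$, and decompose the induced $\aut(S_0)$-equivariant birational map $S_0 \dashrightarrow S_1$ into a sequence of $\aut(S_0)$-equivariant Sarkisov links. Each such link is either ruled out by $G$-minimality of $S_0$ (it would contract an invariant $(-1)$-curve), or is one of a short list of equivariant Sarkisov links between the surfaces in our classification; going through the short list one checks that in each case either $H = \aut(S_0)$ after identification, or $\aut(S_1)$ already appears in the list and is strictly smaller, contradicting $\aut(S_0) \subset H$. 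The bulk of the technical work, carried out in \cite{MR2504924}, lies in the analysis of equivariant Sarkisov links between the various conic bundles, in particular distinguishing the exceptional and $(\Z/2\Z)^2$-conic bundles from those that do reduce equivariantly to a Hirzebruch or a del Pezzo surface.
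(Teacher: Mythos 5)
The paper does not actually prove this statement: Theorem \ref{maxsubgroups} is quoted from Blanc \cite{MR2504924} (building on Enriques), so there is no internal proof to compare against. Your outline is a faithful summary of the strategy of that reference: regularization in the style of Weil, reduction to $G$-minimal rational surfaces via the equivariant minimal model program, the Manin--Iskovskikh dichotomy (Picard rank one versus conic bundle), and maximality via decomposition into equivariant Sarkisov links. As a standalone argument it remains a sketch, and two points need more care than you give them. First, your assertion that a maximal algebraic subgroup ``must therefore equal $\aut(S_0)$'' for a $G$-minimal $S_0$ is not quite right: in the conic bundle cases (5) and (11) the maximal group is $\aut(S,\pi)$, the subgroup preserving the fibration, and in all cases one must verify that the candidate group is genuinely an algebraic subgroup in the sense used just before the theorem, i.e.\ closed of bounded degree with finitely many components --- for general rational surfaces $\aut(S)$ can be an infinite discrete group and hence not algebraic at all. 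Second, the opening assertion of the theorem, that every algebraic subgroup is contained in a maximal one, does not follow from Zorn's lemma alone; it requires a noetherianity or boundedness argument for ascending chains of algebraic subgroups, which your sketch omits entirely. Both points are dealt with in \cite{MR2504924}, which is also all the present paper relies on.
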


A subgroup of $\Cr_2(\C)$ consisting only of elliptic elements is called a {\it group of elliptic elements}. In \cite{Urech:2018aa} groups of elliptic elements of $\Cr_2(\C)$ have been classified. In particular, the following result is shown:

\begin{theorem}[{\cite[Theorem 1.1 and 1.2]{Urech:2018aa}}]\label{ellipticelements}
	Let $G\subset\Cr_2(\C)$ be a subgroup of elliptic elements. Then one of the following is true:
	\begin{enumerate}
		\item $G$ is conjugate to a subgroup of an algebraic group;
		\item $G$ preserves a rational fibration;
		\item $G$ is a torsion group and $G$ is isomorphic to a subgroup of an algebraic group.
	\end{enumerate}
\end{theorem}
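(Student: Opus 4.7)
My approach is to analyze the action of $G$ on the Picard--Manin hyperbolic space $\h(\p^2)$ in tandem with degree considerations, and to reduce to the structural results stated earlier in the paper.

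First, I would treat the case in which the degrees of elements of $G$ are uniformly bounded. If $\sup_{f\in G}\deg(f) = d < \infty$, then $G$ is contained in the quasi-projective subset $\Cr_2(\C)_{\leq d}$. By the description of the Zariski topology on $\Cr_n$ given in \cite{MR3092478}, the Zariski closure $\overline{G}$ is a closed subgroup of bounded degree and is therefore an algebraic subgroup. Combined with Theorem \ref{maxsubgroups}, this places $G$ into conclusion (1).

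Now assume the degrees in $G$ are unbounded. Because $d(e_0, f_\ast e_0) = \log\deg(f) + O(1)$, the orbit $G\cdot e_0 \subset \h(\p^2)$ is unbounded, so $G$ cannot have a common fixed point in $\h(\p^2)$. Since $G$ contains no loxodromic element, the classification of isometry groups of a hyperbolic space forces $G$ to fix a unique point $\omega$ on the boundary $\partial\h(\p^2)$ (otherwise, a standard ping-pong argument applied to two elliptic elements with distinct fixed sets would produce a loxodromic commutator). The isotropic class $\omega$, interpreted on an appropriate birational model of $\p^2$, is proportional to the class of a fiber of a $G$-invariant fibration, and this fibration is either rational or of genus~$1$; this is the parabolic-boundary analysis of the Picard--Manin space from \cite{MR2811600} and \cite{cantat2013normal}. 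If the fibration is rational, we are in conclusion (2). Otherwise, by Lemma \ref{noloxfibration} and the discussion preceding it, $G$ is conjugate to a subgroup of $\aut(X)$ for some Halphen surface $X$; Theorem \ref{halphenstructure} shows that $\aut(X)$ is virtually an extension of a cyclic group by a finitely generated abelian group, hence is a linear group, so $G$ is isomorphic to a subgroup of a linear algebraic group. To land in conclusion (3), I would finally argue that $G$ must be a torsion group in this subcase: since $\rho(\aut(X))$ is finite, any element of infinite order in $\aut(X)$ has a power in $\ker(\rho)$ acting as a translation along the generic fiber, and combining such translations with $G$-invariance either conjugates $G$ into a positive-dimensional algebraic subgroup (conclusion (1)) or produces an auxiliary $G$-invariant rational fibration (conclusion (2)), yielding a contradiction with the standing hypothesis that neither holds.

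The main obstacle is the Halphen subcase: isolating exactly when the presence of non-torsion elements forces $G$ back into conclusions (1) or (2), rather than remaining a genuinely infinite non-algebraic subgroup of $\aut(X)$. This requires a fine analysis of the Mordell--Weil group of the elliptic fibration and the structure of $\ker(\rho) \subset \aut(X)$ from Theorem \ref{halphenstructure}. A secondary but technically delicate point is the classification of $G$-fixed boundary points in the infinite-dimensional Picard--Manin space, where the absence of local compactness requires care beyond the classical finite-dimensional hyperbolic setting.
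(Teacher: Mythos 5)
This statement is not proved in the paper at all: it is imported verbatim from \cite{Urech:2018aa}, so there is no internal proof to compare against. Judged on its own merits, your proposal has a genuine gap at its pivotal step. The bounded-degree case is fine (a closed subgroup of bounded degree is an algebraic subgroup, giving conclusion (1)), and the passage from unbounded orbit plus absence of loxodromic elements to a unique fixed point $\omega\in\partial\h(\p^2)$ is a standard fact about horocyclic actions on hyperbolic spaces. The problem is the next sentence: you assert that $\omega$ ``is proportional to the class of a fiber of a $G$-invariant fibration.'' That implication is only available for the fixed boundary point of a \emph{single parabolic isometry induced by a birational map} (Theorem \ref{dim2}, via Gizatullin/Diller--Favre), where one knows the invariant nef class is effective and rational. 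The common fixed boundary point of a group of \emph{elliptic} isometries with unbounded orbits is a priori just an isotropic direction in the completed Picard--Manin space; nothing forces it to be a rational class, let alone the class of a fibration. Indeed, if your implication held, every unbounded group of elliptic elements would preserve a fibration and alternative (3) of the theorem would be vacuous --- but (3) is there precisely to accommodate unbounded torsion groups (e.g.\ increasing unions of finite groups conjugated into automorphism groups by maps of unbounded degree), whose fixed boundary point need not correspond to any fibration. Handling exactly this situation is the substance of \cite{Urech:2018aa}, where the dichotomy is organized around whether $G$ contains an element of infinite order (whose iterates regularize into $\aut^0(Y)$ and whose normalizer can be analyzed) versus $G$ being torsion (treated by separate arguments yielding only an abstract embedding into an algebraic group, not a conjugation).

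A secondary structural error: you locate conclusion (3) inside the genus-$1$/Halphen subcase. That subcase in fact collapses: the infinite-order elements of $\ker(\rho)\subset\aut(X)$ for a Halphen surface $X$ are Halphen twists, which are \emph{parabolic}, so a group of elliptic elements inside $\aut(X)$ meets $\ker(\rho)$ only in its torsion part and is finite, hence algebraic. The torsion alternative (3) does not arise there; it arises in the unbounded case where no invariant fibration can be extracted from $\omega$. So while your outline reproduces the correct trichotomy, the argument as written does not establish it.
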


We also recall the following result, which in the original version was stated for the case of complex numbers. However, the proof only relies on hyperbolic geometry of $\h(\p^2)$ and does not depend on the characteristic of the base field:

\begin{theorem}[{\cite[Proposition 6.14]{MR2811600}}]\label{cantatelliptic}
	Let $\kk$ be an algebraically closed field and let $\Gamma\subset\Cr_2(\kk)$ be a finitely generated subgroup of elliptic elements. Then $\Gamma$ is either contained in an algebraic subgroup, or $\Gamma$ preserves a rational fibration and is therefore conjugate to a subgroup of $\J\simeq\PGL_2(\kk)\ltimes\PGL_2(\kk(t))$.
\end{theorem}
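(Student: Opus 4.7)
The plan is to exploit the action of $\Gamma$ by isometries on the hyperbolic space $\h(\p^2)$ and apply the following dichotomy for a finitely generated group all of whose elements act as elliptic isometries of a $\mathrm{CAT}(-1)$ space: either there is a common fixed point in $\h(\p^2)$, or there is a common fixed point on $\partial\h(\p^2)$. The geometric input is that two elliptic isometries whose fixed sets are sufficiently far apart produce a loxodromic product, with translation length bounded below by $\dist(\mathrm{Fix}(f),\mathrm{Fix}(g))$; since by hypothesis no word in the generators of $\Gamma$ is loxodromic, the fixed sets of the generators must accumulate and hence meet either in $\h(\p^2)$ or on its boundary.

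In the case of a common fixed point $v\in\h(\p^2)$, the orbit $\Gamma\cdot e_0$ is contained in the closed ball of radius $d(e_0,v)$ around $v$. Since $d(e_0,g_*(e_0))=\arccosh(e_0\cdot g_*(e_0))$ and $e_0\cdot g_*(e_0)$ equals, up to normalization, the degree of $g$ with respect to the polarization represented by $e_0$, the set $\{\deg(g)\mid g\in\Gamma\}$ is bounded. The Zariski closure of $\Gamma$ in $\Cr_2(\kk)$ is then a closed subgroup of bounded degree, which by the results recalled in Section \ref{zariskitop} is an algebraic subgroup.

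In the case of a common fixed point on $\partial\h(\p^2)$, one obtains an isotropic class $[v]\in\ZZZ(\p^2)$ preserved by $\Gamma$. Arguing as in the proof of Theorem \ref{dim2} for parabolic elements, on a suitable birational model the class $v$ is a positive multiple of the class of a fiber of a $\Gamma$-invariant fibration $\pi$; by Theorem \ref{dim2}(2b), an element that preserves a genus-$1$ fibration and acts nontrivially on its fibers is parabolic, never elliptic, so the fibration $\pi$ must be rational (elements of $\Gamma$ with trivial action on the base would otherwise force $\Gamma$ into $\aut(X)$ for a Halphen surface $X$, a situation one handles by reducing to $\aut^0$ via Theorem \ref{halphenstructure}). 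Once $\pi$ is rational, the Theorem of Noether and Enriques conjugates $\Gamma$ into $\J$.

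The principal obstacle is the first step, the hyperbolic-geometric dichotomy, because $\h(\p^2)$ is infinite-dimensional and not locally compact, so the classical proper-discontinuity arguments available in $\h^n$ do not apply verbatim. The expected workaround is to replace $\h(\p^2)$ by the finite-dimensional hyperbolic subspace spanned, inside $\ZZ(X)\otimes\R$ for a suitable blow-up model $X$, by $e_0$, its $\Gamma$-orbit, and the fixed classes of the finitely many generators of $\Gamma$; this reduces the alternative to classical hyperbolic geometry, where the ping-pong estimate above produces the required common fixed point.
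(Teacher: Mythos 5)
The paper does not actually prove this statement: it is quoted from Cantat's article \cite{MR2811600} (Proposition 6.14), with only the remark that the argument is characteristic-free. So the relevant comparison is with Cantat's proof, whose overall architecture you have reconstructed correctly: a dichotomy ``fixed point in $\h(\p^2)$ versus fixed point on $\partial\h(\p^2)$'', then bounded orbit $\Rightarrow$ bounded degree $\Rightarrow$ algebraic subgroup, and fixed boundary point $\Rightarrow$ invariant pencil. The middle step is fine. The other two, as you have set them up, rest on claims that are false or do not work.

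First, your geometric input is wrong: two elliptic isometries of a $\CAT(-1)$ space with far-apart fixed sets need not have loxodromic product, and the translation length is certainly not bounded below by $\dist(\operatorname{Fix}(f),\operatorname{Fix}(g))$. Already in $\mathbb{H}^2$ the product of rotations by $\theta$ and $\phi$ about points at distance $d$ is loxodromic only when $\sin(\theta/2)\sin(\phi/2)\cosh(d)$ is large, so the threshold on $d$ blows up as the angles shrink; and for elliptic elements of $\Cr_2(\kk)$ acting on $\ZZZ(\p^2)$ there is no lower bound on these ``rotation angles''. Your fallback also fails: in the interesting case the $\Gamma$-orbit of $e_0$ is infinite, so its span is infinite-dimensional, and the span of $e_0$ together with the generators' fixed sets is not $\Gamma$-invariant. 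The dichotomy must instead be extracted from the convexity of the displacement function $x\mapsto\max_i d(x,g_ix)$ on the complete $\CAT(-1)$ space $\h(\p^2)$: a minimizing sequence either stays bounded, giving a fixed point since no element is loxodromic, or converges to a point of $\partial\h(\p^2)$ fixed by $\Gamma$ — this is where finite generation is used. Second, in the boundary case you cannot ``argue as in the proof of Theorem \ref{dim2}'': that argument attaches the invariant isotropic class to a single parabolic element via its degree growth, whereas here every element is elliptic and the fixed boundary direction is a priori only a vector $v$ in the completion $\ZZZ(\p^2)$ with infinitely many nonzero $e_p$-coordinates. The crux of Cantat's proof is to show that $v$ in fact has finitely many nonzero $e_p$-components — using $\deg(g)=e_0\cdot g_*e_0$, Noether's inequalities and the $\ell^2$-condition defining $\ZZZ(\p^2)$, together with the unboundedness of the degrees — so that it becomes a genuine nef isotropic class on a blow-up and hence the class of a pencil; this step is absent from your sketch. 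Finally, your exclusion of genus-$1$ pencils inverts Theorem \ref{dim2}(2b): elliptic elements can perfectly well preserve a genus-$1$ fibration. The correct point is that a finitely generated group of elliptic elements preserving such a pencil lies in $\aut(X)$ for a Halphen surface $X$ and is finite by Theorem \ref{halphenstructure}, hence falls into the ``algebraic subgroup'' branch of the statement rather than being ruled out.
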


\subsection{Monomial transformations}
The subgroup of diagonal automorphisms $D_2\subset\PGL_{3}(\kk)$ is a torus of rank $2$. It is maximal in the following sense: all algebraic tori in $\Cr_2(\kk)$ are of rank $\leq 2$ and are conjugate in $\Cr_2(\kk)$ to a subtorus of $D_2$ (\cite{bialynicki1967remarks}, \cite{MR0284446}).
A matrix $A=(a_{ij})\in\GL_2(\Z)$ determines a rational map $f_A$ of $\p^2$, which we define by
\[
f_A=(x^{a_{11}}y^{a_{12}},x^{a_{21}}y^{a_{22}}).
\] 
We thus obtain an injective homomorphism $\GL_2(\Z)\to\Cr_{2}(\kk)$. By abuse of notation, we will identify its image with $\GL_2(\Z)$. The normalizer of $D_2$ in $\Cr_2(\kk)$ is the semidirect product 
\[
\norm_{\Cr_2(\kk)}(D_2)=\GL_2(\Z)\ltimes D_2.
\]
Elements in $\GL_2(\Z)\ltimes D_2$ are called {\it monomial transformations}. We say that $f\in\Cr_2(\kk)$ is of {\it monomial type}, if $f$ is conjugate to an element in $\GL_2(\Z)\ltimes D_2$. We call a matrix $A\in\GL_2(\Z)$ {\it loxodromic}, if the corresponding birational monomial map in $\Cr_2(\kk)$ is loxodromic. 

\begin{lemma}\label{puremonomial}
	Let $m\in\GL_2(\Z)\subset\Cr_2(\kk)$ be a loxodromic monomial transformation and $d\in D_2$ a diagonal automorphism. There exists a diagonal automorphism $d'\in D_2$ such that $d'^{-1}dmd'=m$.
\end{lemma}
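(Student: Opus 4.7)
The plan is to translate the equation $d'^{-1}dmd'=m$ into a linear-algebra problem on the cocharacter lattice of the torus $D_2$. Since $D_2$ is normal in $\GL_2(\Z)\ltimes D_2$, conjugation by $m$ preserves $D_2$; writing $A=(a_{ij})$ for the matrix corresponding to $m$, a direct computation with the monomial formulas gives
\[
m\cdot(\alpha,\beta)\cdot m^{-1}=(\alpha^{a_{11}}\beta^{a_{12}},\,\alpha^{a_{21}}\beta^{a_{22}}),
\]
that is, the conjugation action of $m$ on $D_2=(\kk^*)^2$ is the natural action of $A$ read multiplicatively. Rewriting the desired equation as $d=d'\cdot(md'^{-1}m^{-1})$, the lemma becomes the assertion that $d$ lies in the image of the endomorphism $I-A\colon D_2\to D_2$ induced by the integer matrix $I-A$.

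Next, I would verify that $I-A$ is in fact surjective. Because $A\in\GL_2(\Z)$ one has $\det A=\pm1$, so the eigenvalues $\lambda_1,\lambda_2$ of $A$ satisfy $\lambda_1\lambda_2=\pm 1$. The loxodromic hypothesis means $\lambda(m)>1$, and for a monomial transformation the dynamical degree equals the spectral radius of $A$; combined with $|\lambda_1\lambda_2|=1$ this forces both eigenvalues to be real with $|\lambda_1|>1>|\lambda_2|>0$. In particular neither eigenvalue equals $1$, so $\det(I-A)=(1-\lambda_1)(1-\lambda_2)\neq 0$.

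To finish, I would invoke the Smith normal form to write $I-A=U\operatorname{diag}(n_1,n_2)V$ with $U,V\in\GL_2(\Z)$ and $n_1n_2=\det(I-A)\neq 0$. Since $\kk$ is algebraically closed, each $n_i$-th power map $\kk^*\to\kk^*$ is surjective, hence $I-A$ is surjective on $D_2=(\kk^*)^2$. Any preimage of $d$ under $I-A$ is then a $d'\in D_2$ satisfying $d=d'\cdot(md'^{-1}m^{-1})$, which is equivalent to $d'^{-1}dmd'=m$.

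The key point, and essentially the only place where the hypothesis is actually used, is the exclusion of $1$ from the spectrum of $A$; without loxodromicity (for instance for unipotent $A$) the map $I-A$ acquires a nontrivial kernel and cokernel on $D_2$ and the statement generally fails. Once that is in hand, the rest is bookkeeping: computing the conjugation action in monomial coordinates and invoking divisibility of $\kk^*$.
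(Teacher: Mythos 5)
Your proof is correct and follows essentially the same route as the paper: reduce the conjugacy equation to the surjectivity of the endomorphism of $D_2$ induced by $I-A$, and use loxodromicity to see that $1$ is not an eigenvalue of $A$, so $\det(I-A)\neq 0$. The only cosmetic difference is the last step, where you justify surjectivity via the Smith normal form and divisibility of $\kk^*$ (exactly as the paper itself does in Lemma~\ref{denselemma}), while the paper's proof of this lemma instead notes that $\det(I-A)\neq 0$ gives a finite kernel and hence a surjective torus endomorphism.
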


\begin{proof}
	Assume that $m=(x^ay^b,x^cy^d)$, where $A\coloneqq\left(\begin{matrix}
		a & b \\ 
		c & d
	\end{matrix} \right)\in\GL_2(\Z)$. Then $m$ acts by conjugation on $D_2$ by sending $(c_1x,c_2y)$ to $(c_1'x, c_2'y)$, where $c_1'=c_1^ac_2^b$ and $c_2'=c_1^cc_2^d$. We therefore have $(c_1x,c_2y)^{-1}m(c_1x,c_2y)=(d_1x, d_2y)m$, where $d_1=c_1^{a-1}c_2^b$ and $d_2=c_1^cc_2^{d-1}$. To show the claim of the lemma it is therefore enough to show that the homomorphism $\varphi_{A-\id}$ of $D_2$ given by  $(c_1x,c_2y)\mapsto (d_1x, d_2y)$ is surjective. Since  $m$ is loxodromic, the matrix $A$ has no eigenvalue of modulus $1$ and hence the determinant of $A-\id$ is not $0$. This is equivalent to the kernel of $\varphi_{A-\id}$ being finite, which implies surjectivity.
\end{proof}

\begin{lemma}\label{densed2}
	Let $m\in\GL_2(\Z)\subset\Cr_2(\kk)$ be a loxodromic monomial map and $\Delta_2\subset D_2$ an infinite subgroup that is normalized by $m$. Then $\Delta_2$ is dense in $D_2$ with respect to the Zariski topology. 
\end{lemma}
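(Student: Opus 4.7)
My plan is to take the Zariski closure $H := \overline{\Delta_2}$ in $D_2$ and argue that $H$ must be all of $D_2$ by ruling out every intermediate dimension. The key structural observation is that, since $\Delta_2$ is normalized by $m$, the algebraic automorphism
\[
\varphi_A : (c_1, c_2) \longmapsto (c_1^a c_2^b,\, c_1^c c_2^d)
\]
of $D_2$ obtained by conjugation by $m$ (with $A = \bigl(\begin{smallmatrix} a & b \\ c & d \end{smallmatrix}\bigr) \in \GL_2(\Z)$, as in the proof of Lemma \ref{puremonomial}) stabilizes $\Delta_2$ and therefore stabilizes $H$ as well. Because $\Delta_2$ is infinite, $H$ contains an infinite algebraic subgroup, so $\dim H \geq 1$; and because closed subgroups of the two-dimensional torus $D_2 \cong (\kk^*)^2$ have connected component a subtorus of dimension $0$, $1$, or $2$, the only case I need to exclude is $\dim H = 1$.

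The heart of the argument is therefore this: suppose $\dim H = 1$, so the connected component $H^0$ is a one-dimensional subtorus of $D_2$, necessarily $\varphi_A$-invariant because it is characteristic in $H$. Every one-dimensional subtorus of $D_2$ arises as $\ker \chi_{p,q}$ for a primitive lattice vector $(p,q) \in \Z^2$, where $\chi_{p,q}(c_1, c_2) = c_1^p c_2^q$, and $\varphi_A$-invariance of $\ker \chi_{p,q}$ is equivalent to $\varphi_A^* \chi_{p,q}$ being a character that vanishes on $\ker \chi_{p,q}$. The only such characters are the integer multiples of $\chi_{p,q}$; and since $\varphi_A^*$ is a lattice automorphism of $X(D_2) \cong \Z^2$, primitive vectors go to primitive vectors, forcing $\varphi_A^* \chi_{p,q} = \pm \chi_{p,q}$, i.e.\ $A^T \binom{p}{q} = \pm \binom{p}{q}$.

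So $A$ has $\pm 1$ as an eigenvalue; combined with $\det A = \pm 1$ this forces the second eigenvalue to also be $\pm 1$, so the spectral radius of $A$ equals $1$, which is $\lambda(m)$ for a monomial map, contradicting the loxodromicity of $m$. Hence $\dim H = 2$ and $H = D_2$. The only step that needs care is the character-theoretic translation of $\varphi_A$-invariance into an integer eigenvector condition, and in particular the primitivity argument pinning down the eigenvalue to $\pm 1$; everything else is routine linear algebra over $\Z$ together with the classification of closed subgroups of a torus.
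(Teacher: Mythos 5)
Your proof is correct, but it takes a different route from the paper's. The paper also reduces to excluding the case $\dim \overline{\Delta}_2^0 = 1$, but it does so geometrically: the orbits of a one-dimensional subtorus form a (pencil of curves, hence a) fibration of $\p^2$, and since $m$ normalizes $\overline{\Delta}_2^0$ it permutes these orbits and therefore preserves a fibration, which is forbidden for loxodromic maps by the Gizatullin--Cantat--Diller--Favre theorem (Theorem \ref{dim2}(3)). You instead translate invariance of the subtorus $\ker\chi_{p,q}$ under the conjugation automorphism $\varphi_A$ into the lattice condition $A^T(p,q)^T=\pm(p,q)^T$, and derive the contradiction from $\det A=\pm1$ forcing spectral radius $1$, i.e.\ $\lambda(m)=1$. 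Your character-theoretic version is more elementary and self-contained --- it needs only that $\lambda(f_A)$ equals the spectral radius of $A$, not the classification of degree growth --- and it is visibly characteristic-free; the paper's version is shorter to state because Theorem \ref{dim2} is already part of its standing toolkit. The two arguments are really two faces of the same fact (the fibration in the paper's proof is exactly the one defined by the character $\chi_{p,q}$, and ``$m$ permutes its fibers'' is your eigenvector condition), but your justification of the key step is independent of the dynamical machinery. All the delicate points you flag (connectedness and classification of one-dimensional subtori, primitivity pinning the eigenvalue to $\pm1$, $\varphi_A$-invariance of the identity component) are handled correctly.
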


\begin{proof}
	Let $\overline{\Delta}_2^0$ be the neutral component of the Zariski-closure of $\Delta_2$. If $\overline{\Delta}_2^0$ has dimension 2, we are done. Otherwise, $\overline{\Delta}_2^0$ is of dimension 1, since $\Delta_2$ is not finite. It follows that a general orbit of $\overline{\Delta}_2^0$ has dimension 1. Since $m$ normalizes $\overline{\Delta}_2^0$ it permutes the orbits. This implies that $m$ preserves a fibration. But this is not possible since $m$ is loxodromic, by assumption.
\end{proof}

\begin{lemma}\label{uniquembeddtorus}
	Let $f\in\Cr_2(\kk)$ be a birational transformation such that $fD_2f^{-1}\subset \GL_2(\Z)\ltimes D_2$. Then $f\in \GL_2(\Z)\ltimes D_2$.
\end{lemma}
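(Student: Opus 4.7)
My strategy is to prove that $f$ normalizes $D_2$, from which $f\in\norm_{\Cr_2(\kk)}(D_2)=\GL_2(\Z)\ltimes D_2$ follows immediately. This proceeds in two steps: first sharpen the hypothesis $fD_2f^{-1}\subset\GL_2(\Z)\ltimes D_2$ to $fD_2f^{-1}\subset D_2$, and then upgrade the inclusion to an equality.

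\emph{Step 1.} Let $c_f\colon D_2\to\GL_2(\Z)\ltimes D_2$ denote conjugation by $f$, and let $\pi\colon\GL_2(\Z)\ltimes D_2\to\GL_2(\Z)$ be the semidirect-product projection. The composite $\pi\circ c_f\colon D_2\to\GL_2(\Z)$ is a group homomorphism whose image is a quotient of $D_2\simeq(\kk^*)^2$. Since $\kk$ is algebraically closed, $\kk^*$ is a divisible abelian group, and so is every quotient of it. On the other hand, $\GL_2(\Z)$ is a linear group and therefore residually finite by Malcev's theorem; in any residually finite group the only divisible element is the identity, because each non-trivial element would survive in some finite quotient, and a finite group has no non-trivial divisible elements. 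Hence $\pi\circ c_f$ is trivial, and $fD_2f^{-1}\subset D_2$.

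\emph{Step 2.} The map $c_f\colon D_2\to D_2$ is simultaneously a group homomorphism and a morphism of algebraic varieties: the family $\{fgf^{-1}\mid g\in D_2\}\subset\Cr_2(\kk)$ is parametrized rationally by $D_2$, so $c_f$ is a morphism in the Blanc sense recalled in Section \ref{zariskitop}. A rational group homomorphism between algebraic tori is a morphism of algebraic groups, hence $c_f$ has the form $(a,b)\mapsto(a^{p}b^{q},a^{r}b^{s})$ for some matrix $M=\bigl(\begin{smallmatrix}p&q\\r&s\end{smallmatrix}\bigr)\in M_2(\Z)$. Because $c_f$ is injective as an abstract group homomorphism (being conjugation by the invertible $f$), no non-trivial cocharacter can lie in the kernel of $M$, forcing $\det M\neq 0$. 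Since $\kk^*$ is divisible, Smith normal form reduces surjectivity of $M\colon(\kk^*)^2\to(\kk^*)^2$ to surjectivity of the one-variable power map $a\mapsto a^{d}$ with $d\neq 0$, which holds on the divisible group $\kk^*$. Therefore $c_f(D_2)=D_2$, and $f\in\norm_{\Cr_2(\kk)}(D_2)=\GL_2(\Z)\ltimes D_2$.

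\emph{Main obstacle.} The only non-routine point is the promotion, in Step 2, of $c_f\colon D_2\to D_2$ from a rational group homomorphism to a regular morphism of algebraic groups given by an integer matrix. This is handled by combining the Blanc framework for the Zariski topology on $\Cr_n(\kk)$ recalled in Section \ref{zariskitop} with the classical fact that a rational homomorphism between smooth algebraic groups extends to a morphism defined everywhere; once this is in hand, the divisibility and Smith-normal-form manipulations are purely formal.
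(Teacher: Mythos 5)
Your proof is correct, and while it follows the same two-stage skeleton as the paper---first force $fD_2f^{-1}\subseteq D_2$, then upgrade this inclusion to an equality so that $f$ normalizes $D_2$ and hence lies in $\norm_{\Cr_2(\kk)}(D_2)=\GL_2(\Z)\ltimes D_2$---the mechanism of your first step is genuinely different. The paper observes that $fD_2f^{-1}$ is an algebraic subgroup and hence of bounded degree, that $\GL_2(\Z)$ contains only finitely many elements of any given degree, and then uses connectedness of $fD_2f^{-1}$ to kill the resulting finite $\GL_2(\Z)$-component. You replace this with pure group theory: the image of $D_2\simeq(\kk^*)^2$ under $\pi\circ c_f$ is a divisible abelian subgroup of the residually finite group $\GL_2(\Z)$, hence trivial. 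This makes Step 1 independent of the algebraicity of the conjugate subgroup (though you still invoke that machinery in Step 2) and isolates exactly where algebraic closedness of $\kk$ enters, namely in the divisibility of $\kk^*$. Conversely, your Step 2 is more careful than the paper, which passes from $fD_2f^{-1}\subseteq D_2$ to ``$f$ normalizes $D_2$'' in one breath; the implicit point there is that a connected two-dimensional closed subgroup of the two-dimensional torus $D_2$ must equal $D_2$, and your determinant and Smith-normal-form computation (in the spirit of Lemma~\ref{denselemma}) establishes precisely this. The only step that deserves a word of justification is the promotion of $c_f$ to a homomorphism of algebraic groups given by an integer matrix; as you note, this follows from the compatibility of the algebraic group structure on closed bounded-degree subgroups with the Zariski topology recalled in Section~\ref{zariskitop}, so the argument is sound.
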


\begin{proof}
	Since $fD_2f^{-1}$ is an algebraic subgroup, it is of bounded degree. Since $\GL_2(\Z)$ contains only finitely many elements of a given degree, $fD_2f^{-1}$ is therefore contained in a group of the form $H\ltimes D_2$, where $H$ is finite. Since $fD_2f^{-1}$ is connected, it is contained in $D_2$, which implies that $f$ normalizes $D_2$.
\end{proof}

Let $M\in\GL_2(\Z)$ and $f_M$ be the corresponding birational transformation. The dynamical degree $\lambda(f_M)$ of $f_M$ is exactly the spectral radius of the matrix $M$. This shows in particular that the dynamical degree of a monomial matrix is always a quadratic algebraic integer.
If $M\in\GL_2(\Z)$ has spectral radius strictly larger than 1, the birational map $f_M$ is loxodromic. This yields examples of loxodromic elements that normalize an infinite subgroup consisting only of elliptic elements. The following theorem shows that, up to conjugacy, these are the only examples with this property if we work over the field of complex numbers $\C$. A first version has been proven by Cantat in {\cite[Theorem 7.1]{MR2881312}}, the more general version, which we state below, can be found in \cite{Urech:2018aa}:

\begin{theorem}\label{ellitpicnorm}
	Let $N\subset\Cr_2(\C)$ be a subgroup containing at least one loxodromic element. Assume that there exists a short exact sequence 
	\[
	1\to A\to N\to B\to 1,
	\]
	where $A$ is an infinite group of elliptic elements. Then $N$ is conjugate to a subgroup of $\GL_2(\Z)\ltimes D_2$.
\end{theorem}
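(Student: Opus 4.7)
Fix a loxodromic $\ell\in N$. The goal is to produce, after conjugation, a rank-two torus $H\subseteq N$ coinciding with $D_2$; then the fact that $N$ normalises $H$ will give $N\subset\norm_{\Cr_2(\C)}(D_2)=\GL_2(\Z)\ltimes D_2$. First I would apply Theorem~\ref{ellipticelements} to $A$. In cases~(1) and~(3) of that theorem, $A$ is already contained, after conjugation, in an algebraic subgroup of $\Cr_2(\C)$. In case~(2), $A$ preserves a rational fibration; but since $A\triangleleft N$, the set $\mathcal{F}$ of all rational fibrations preserved by $A$ is $N$-invariant, and since $\ell$ preserves no fibration by Theorem~\ref{dim2}(3), $\ell$ permutes $\mathcal{F}$ without fixed points, forcing $\mathcal{F}$ to be infinite. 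An infinite elliptic subgroup of $\J$ that preserves infinitely many rational pencils must act diagonally in suitable affine coordinates and hence is again contained in a conjugate of $D_2$. In every case the Zariski closure $\overline{A}\subset\Cr_2(\C)$ is an algebraic subgroup normalised by~$\ell$.

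\textbf{Identifying the torus.} Let $H:=\overline{A}^0$ be the neutral component. The central claim is that $\dim H=2$ and $H$ is a torus. If $\dim H\ge 1$, then by Weil regularisation $H$ acts regularly on a smooth projective surface $S$, and $\ell$ permutes the $H$-orbits on~$S$. If the general $H$-orbit were one-dimensional, then $\ell$ would preserve the resulting rational fibration, contradicting Theorem~\ref{dim2}(3); hence $\dim H=2$ and $H$ acts almost transitively. The connected two-dimensional algebraic groups acting almost transitively on a rational surface and consisting entirely of elliptic elements are, up to conjugacy, $D_2\cong(\C^*)^2$, the additive group $(\C,+)^2$, the mixed group $\C\times\C^*$, and the non-abelian affine group $\C\rtimes\C^*$. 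The last three embed into $\aut(\p^2)=\PGL_3(\C)$ or into $\aut(\p^1\times\p^1)$; in each such embedding one checks that the full $\Cr_2(\C)$-normaliser of $H$ is again an algebraic subgroup and therefore consists of elliptic elements by Theorem~\ref{dim2}(1) — contradicting $\ell\in N$. So $H$ is a two-dimensional torus, and Lemma~\ref{uniquembeddtorus} lets us conjugate so that $H=D_2$. The residual case $\dim H=0$, in which $A$ is a torsion group inside a linear algebraic group, is handled by passing to a finite-index subgroup of $A$ centralised by a suitable power of $\ell$ and re-running the same argument.

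\textbf{Conclusion and main obstacle.} With $H=D_2$, any $n\in N$ satisfies $nD_2n^{-1}=n\overline{A}^0n^{-1}=\overline{A}^0=D_2$, so $n\in\norm_{\Cr_2(\C)}(D_2)=\GL_2(\Z)\ltimes D_2$; thus $N\subset\GL_2(\Z)\ltimes D_2$, as required, and Lemma~\ref{densed2} is automatically consistent (it implies $A$ is Zariski-dense in $D_2$). The hard part is the classification step above: one must verify that the $\Cr_2(\C)$-normaliser of each of the three non-toric two-dimensional almost-transitive $H$ is again algebraic. This rests on the essential uniqueness of the $H$-equivariant birational model for such a non-toric $H$, which forces any birational normaliser to preserve that model up to equivariant modifications, whence it is regular on it, of bounded degree, and elliptic by Theorem~\ref{dim2}(1).
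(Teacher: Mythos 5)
First, a point of reference: the paper does not prove Theorem \ref{ellitpicnorm} itself --- it imports the statement from Cantat's work and from \cite{Urech:2018aa} --- so there is no internal proof to compare against, and your argument has to stand on its own. Its global architecture (show that the Zariski closure $\overline{A}$ is an algebraic subgroup, force its neutral component to be a two-dimensional torus by playing the orbits off against Theorem \ref{dim2}(3), conjugate that torus to $D_2$, and finish with the normalizer $\norm_{\Cr_2(\C)}(D_2)=\GL_2(\Z)\ltimes D_2$) is indeed the strategy of the cited sources. But as written there are gaps that are not cosmetic.

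(i) Case (3) of Theorem \ref{ellipticelements} gives only an \emph{abstract isomorphism} of the torsion group $A$ with a subgroup of a linear group, not a conjugation into an algebraic subgroup; in that case $A$ need not have bounded degree and $\overline{A}$ need not be an algebraic subgroup, so the sentence ``in every case the Zariski closure $\overline{A}$ is an algebraic subgroup'' fails at the outset. Your proposed repair --- passing to a finite-index subgroup of $A$ centralised by a power of $\ell$ --- is unsupported: nothing guarantees any power of $\ell$ centralises anything in $A$, and even if it did you would still have no algebraic group on which to ``re-run the same argument''. This case requires its own treatment and is where the cited references do genuine work. (ii) In case (2), the claim that an infinite elliptic subgroup of $\J$ preserving infinitely many rational pencils ``must act diagonally'' and hence lies in a conjugate of $D_2$ is unjustified and stronger than what is true (the diagonal $\PGL_2(\C)\subset\aut(\p^1\times\p^1)$ preserves two rulings and is no torus); what you actually need, and can prove, is only that preserving two distinct fibration classes bounds the degrees in $A$, whence $\overline{A}$ is algebraic. (iii) The step you yourself flag as ``the hard part'' --- that the full $\Cr_2(\C)$-normaliser of each non-toric two-dimensional $H$, e.g.\ the translation group $\C^2$, consists of elliptic elements --- is precisely where the content of the theorem sits: it is exactly the step that fails in characteristic $p$, as the paper's example $(y,x+y^p)$ shows, so ``one checks'' cannot be left unargued. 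The equivariance idea you sketch (a normalising birational map must be $H$-equivariant on the open orbit, hence regular there and of bounded degree) is the right one, but it must be carried out, including the verification that conjugation by the normalising map induces an \emph{algebraic} automorphism of $H$. A last, minor point: Lemma \ref{uniquembeddtorus} is not what lets you conjugate a rank-two torus onto $D_2$; that is the Bia{\l}ynicki-Birula--Demazure statement quoted in the paper's section on monomial transformations.
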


\subsection{Small cancellation}
Small cancellation has been one of the fundamental tools used by Cantat and Lamy to show that $\Cr_2(\C)$ is not simple. Dahmani, Guirardel and Osin applied similar arguments in the context of mapping class groups (\cite{MR3589159}). We refer to \cite{MR3522169} for an overview of the subject.

Let $\epsilon,B>0$. We say that two geodesic lines $L$ and $L'$ in $\h$ are {\it $(\epsilon, B)$-close}, if the diameter of the set $S=\{x\in L\mid d(x, L')\leq \epsilon\}$ is at least $B$.

\begin{definition}
	Let $G$ be a subgroup of $\Cr_2(\kk)$ and $g\in G$ a loxodromic element. We call $g$  {\it rigid in $G$} if there exists an $\epsilon>0$ and a $B>0$ such that for every element $h\in G$ one has: $h(\Ax(g))$ is $(\epsilon, B)$-close to $\Ax(g)$ if and only if $h(\Ax(g))=\Ax(g)$.
	
	We call $g$ {\it tight in $G$} if it is rigid in $G$ and if $h(\Ax(g))=\Ax(g)$ implies $hgh^{-1}=g$ or $hgh^{-1}=g^{-1}$  for all $h\in G$ .
\end{definition}

\begin{example}\label{monomialexample}
	Let $m\in \GL_2(\Z)\subset\Cr_2(\kk)$ be a loxodromic element. Then the group $D_2$ fixes the axis of $m$ pointwise and no power of $m$ is tight in $\Cr_2(\kk)$ (\cite[Example 7.1]{cantatrev}). More generally, if $G\subset\Cr_2(\kk)$ is a subgroup containing $m$ and an infinite subgroup $\Delta_2\subset D_2$, then no power of $m$ is tight in $G$.
\end{example}

\begin{lemma}\label{loxogllemma}
	Let $g\in\GL_2(\Z)$ be a loxodromic element and $f\in\Cr_2(\kk)$ an element such that $fgf^{-1}$ is contained in $\GL_2(\Z)\ltimes D_2$. Then $f\in \GL_2(\Z)\ltimes D_2$.
\end{lemma}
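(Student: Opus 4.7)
The plan is to reduce everything to Lemma \ref{uniquembeddtorus}: if we can show $fD_2f^{-1}\subset\GL_2(\Z)\ltimes D_2$, then $f\in\GL_2(\Z)\ltimes D_2$ follows immediately. The key observation is that the axis of any loxodromic element of $\GL_2(\Z)\ltimes D_2$ is pointwise fixed by $D_2$. Indeed, if $h=f_B\circ s$ with $B\in\GL_2(\Z)$ loxodromic and $s\in D_2$, then (by the same computation as in Lemma \ref{puremonomial}, using that $I-B^T$ is invertible because $1$ is not an eigenvalue of $B$) we can write $h=t^{-1}f_Bt$ for some $t\in D_2$, so $\Ax(h)=t^{-1}(\Ax(f_B))$; since $D_2$ fixes $\Ax(f_B)$ pointwise (Example \ref{monomialexample}) and is abelian, $D_2=t^{-1}D_2t$ fixes $\Ax(h)$ pointwise as well.

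Apply this to $h:=fgf^{-1}\in\GL_2(\Z)\ltimes D_2$: the torus $D_2$ pointwise fixes $\Ax(h)=f(\Ax(g))$, so $f^{-1}D_2f$ pointwise fixes $\Ax(g)$. Since the pointwise stabilizer of a geodesic consists of elliptic isometries, the group
\[
H:=\langle D_2,\,f^{-1}D_2f\rangle\subset\Cr_2(\kk)
\]
is an infinite group of elliptic elements. Moreover $g$ normalizes $H$: it normalizes $D_2$ because $g\in\GL_2(\Z)$, and it normalizes $f^{-1}D_2f$ because $fgf^{-1}\in\norm_{\Cr_2(\kk)}(D_2)=\GL_2(\Z)\ltimes D_2$, so conjugating by $f^{-1}$ gives $gf^{-1}D_2fg^{-1}=f^{-1}D_2f$. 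Hence $N:=\langle H,g\rangle$ contains the loxodromic element $g$ and fits into a short exact sequence $1\to H\to N\to\langle \bar g\rangle\to 1$ with $H$ an infinite normal subgroup of elliptic elements.

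Now I would invoke Theorem \ref{ellitpicnorm}: some $k\in\Cr_2(\C)$ satisfies $kNk^{-1}\subset\GL_2(\Z)\ltimes D_2$. In particular $kD_2k^{-1}\subset\GL_2(\Z)\ltimes D_2$, and Lemma \ref{uniquembeddtorus} forces $k\in\GL_2(\Z)\ltimes D_2$; therefore $N$ is already contained in $k^{-1}(\GL_2(\Z)\ltimes D_2)k=\GL_2(\Z)\ltimes D_2$. In particular $f^{-1}D_2f\subset H\subset\GL_2(\Z)\ltimes D_2$, and one last application of Lemma \ref{uniquembeddtorus} yields $f^{-1}\in\GL_2(\Z)\ltimes D_2$, hence $f\in\GL_2(\Z)\ltimes D_2$.

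The delicate step is the invocation of Theorem \ref{ellitpicnorm}, which is stated only over $\C$; its hypotheses are perfectly tailored here, so the real work of the proof has already been done in that theorem. If one insists on an arbitrary algebraically closed $\kk$, one would have to replace that step with a direct argument — for example showing that any rank-$2$ algebraic torus in $\Cr_2(\kk)$ which pointwise fixes $\Ax(g)$ must coincide with $D_2$, using the Zariski-topology description of algebraic subgroups and the normalizer computation $\norm_{\Cr_2(\kk)}(D_2)=\GL_2(\Z)\ltimes D_2$. Everything else in the argument is characteristic-free.
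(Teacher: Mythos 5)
Your proof is correct and follows essentially the same route as the paper's: both observe that $D_2$ and its $f$-conjugate pointwise fix the axis of the relevant loxodromic element, feed the resulting infinite group of elliptic elements normalized by a loxodromic one into Theorem \ref{ellitpicnorm}, and conclude via Lemma \ref{uniquembeddtorus}. Your write-up is in fact more careful than the paper's (which loosely cites Example \ref{monomialexample} for general elements of $\GL_2(\Z)\ltimes D_2$ and is terse about the final conjugation step), and you rightly flag that Theorem \ref{ellitpicnorm} is only stated over $\C$ even though the lemma is phrased over $\kk$.
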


\begin{proof}
	Assume that $fgf^{-1}\in \GL_2(\Z)\ltimes D_2$. Then the axis of the loxodromic element $fgf^{-1}$ is fixed pointwise by both, $fD_2f^{-1}$ and $D_2$ (see Example \ref{monomialexample}). Hence, the group $A$ generated by $fgf^{-1}$ and $D_2$ is bounded. By Theorem \ref{ellitpicnorm}, $A$ is conjugate to a subgroup of $D_2$. This implies that $fD_2f^{-1}\subset D_2$ and therefore, that $f\in\GL_2(\Z)\ltimes D_2$.
\end{proof}

 In \cite{cantat2013normal} the authors show  that a power of a generic element in $\Cr_2(\C)$ is tight in $\Cr_2(\C)$ and prove the following theorem to argue that $\Cr_2(\C)$ is not simple. In \cite{MR3533276} Lonjou showed that a power of a H\'enon transformation is tight in $\Cr_2(\kk)$, where $\kk$ is an arbitrary field in order to show that $\Cr_2(\kk)$ is not simple. 

\begin{theorem}[{\cite[Theorem 2.10]{cantat2013normal}}]\label{smallcanc}
	Let $G\subset \Cr_2(\C)$ be a subgroup and let $g\in G$ be an element that is tight in $G$. Then every element $h$ in $\left<\left<g \right>\right>$, where $\left<\left<g \right>\right>$ denotes the normal subgroup of $G$ generated by $g$, satisfies the following alternative: Either $h=\id$  or $h$ is  loxodromic and $L(h)\geq L(g)$. In particular, for $n\geq 2$, the element $g$ is not contained in $\left<\left<g^n \right>\right>$ and $G$ is therefore not simple.
\end{theorem}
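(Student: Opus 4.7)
The plan is to adapt classical small cancellation theory to the isometric action of $G$ on the infinite-dimensional hyperbolic space $\h(\p^2)$, using the $G$-orbit of $\Ax(g)$ as the family of relator geodesics. Write $L := \Ax(g)$ and $\mathcal{F} := \{k(L) \mid k \in G\}$. By the tightness of $g$, two distinct axes in $\mathcal{F}$ cannot be $(\epsilon, B)$-close, and any $k \in G$ that setwise stabilizes some $L' \in \mathcal{F}$ conjugates the generating translation of $L'$ to itself or to its inverse. The same constants $(\epsilon, B)$ work for every power $g^n$, so $g^n$ is also tight in $G$ for all $n \geq 1$.

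Given a nontrivial $h \in \langle\langle g \rangle\rangle$, I would express it as a product $h = \prod_{i=1}^{m} k_i g^{a_i} k_i^{-1}$ with $a_i \neq 0$, of minimal total length $m$. The first step is to verify that the axes $L_i := k_i(L)$ satisfy $L_i \neq L_{i+1}$ for every $i$: otherwise $k_i^{-1} k_{i+1}$ stabilizes $L$, and tightness would allow absorbing the $(i+1)$-th factor into the $i$-th one, contradicting minimality. Next I construct a piecewise-geodesic broken line $\gamma$ by concatenating long subsegments of $L_1, \dots, L_m$ and extending it periodically under the $\langle h \rangle$-action. The small cancellation condition bounds the overlap of consecutive segments by $B$, while along each segment the corresponding factor translates by $|a_i| L(g) \geq L(g)$.

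The key geometric step is to verify that, provided $B$ is chosen large compared to the Gromov hyperbolicity constant of $\h(\p^2)$, the broken line $\gamma$ is a $(\lambda, c)$-quasi-geodesic stabilized by $\langle h \rangle$. The Morse lemma then produces a genuine geodesic of $\h(\p^2)$ close to $\gamma$, whose two endpoints on $\partial \h(\p^2)$ are distinct and fixed by $h$; thus $h$ is loxodromic, and its translation length is bounded below by the per-period displacement along $\gamma$, which is at least $L(g)$. The main obstacle is the quantitative hyperbolic geometry balancing $\epsilon$, $B$, and the hyperbolicity constant so that the concatenation of axis segments remains quasi-geodesic and no cancellation between consecutive pieces can erase the translation lower bound; this is precisely the small cancellation machinery developed by Cantat and Lamy in \cite{cantat2013normal}.

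To conclude the final assertion, apply the main claim to $g^n$ for $n \geq 2$: every nontrivial element of $\langle\langle g^n \rangle\rangle$ has translation length at least $L(g^n) = n L(g) > L(g)$, so $g$ cannot lie in $\langle\langle g^n \rangle\rangle$. Hence $\langle\langle g^n \rangle\rangle$ is a proper nontrivial normal subgroup of $G$, and $G$ is not simple.
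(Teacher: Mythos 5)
First, a point of comparison: the paper offers no proof of this statement at all --- it is imported verbatim as Theorem~2.10 of \cite{cantat2013normal} --- so there is no in-paper argument to measure yours against. Your sketch does follow the general shape of the Cantat--Lamy proof (a minimal-length product of conjugates of powers of $g$, distinctness of consecutive axes forced by tightness, a broken line assembled from axis segments and extended $\langle h\rangle$-periodically, a quasi-geodesic stability argument yielding loxodromicity and the bound $L(h)\geq L(g)$). Your reduction showing $L_i\neq L_{i+1}$ by absorbing adjacent factors, and the observation that $g^n$ is tight because $\Ax(g^n)=\Ax(g)$, are both correct, as is the deduction of non-simplicity at the end.

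There is, however, a genuine gap at exactly the step you flag as ``the main obstacle'', and it cannot be dispatched by saying ``provided $B$ is chosen large compared to the Gromov hyperbolicity constant''. The constants $\epsilon$ and $B$ are not yours to choose: they are whatever the tightness hypothesis supplies, and with the purely qualitative definition used in this paper (``there exists an $\epsilon>0$ and a $B>0$ such that\ldots'') they may be arbitrarily small relative to the hyperbolicity constant $\delta$ of $\h(\p^2)$ and to $L(g)$. If $\epsilon\ll\delta$, the fact that distinct axes in the orbit are not $(\epsilon,B)$-close does not prevent two of them from fellow-travelling at distance, say, $2\delta$ over an arbitrarily long stretch; your bound on the overlap of consecutive segments then fails, the broken line need not be a quasi-geodesic, and the per-period displacement estimate $\geq L(g)$ (which requires the overlap losses to be small compared to $L(g)$) collapses. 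This is precisely why Cantat and Lamy work with a quantitative notion of tightness --- rigidity at a scale tied to $\delta$, together with $L(g)$ large compared to the rigidity constants --- and why their Theorem~2.10 carries such hypotheses. Your final appeal to ``the small cancellation machinery developed by Cantat and Lamy'' defers exactly this step back to the result being proved, so as a standalone argument the proposal is circular at its core; to close the gap you would need either to build the quantitative form of tightness into the hypotheses or to carry out the $\delta$-hyperbolic estimates yourself.
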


A couple of years later, Shepherd-Barron has classified tight elements in $\Cr_2(\C)$ using Theorem \ref{ellitpicnorm}:

\begin{theorem}[\cite{Shepherd-Barron:2013qq}]\label{shep}
	 In $\Cr_2(\C)$ every loxodromic element is rigid. If $g$ is conjugate to a monomial map, then no power of $g$ is tight. In all the other cases, there exists an integer $n$ such that $g^n$ is tight.
\end{theorem}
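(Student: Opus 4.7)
My approach splits into three pieces. The first is rigidity: fix a loxodromic $g\in\Cr_2(\C)$ with axis $L=\Ax(g)$ and translation length $\tau=\log\lambda(g)\ge\log\lambda_L$, and suppose for contradiction that $g$ is not rigid. Then for every pair $(\epsilon,B)$ there is an $h\in\Cr_2(\C)$ with $h(L)\ne L$ whose image fellow-travels $L$ within $\epsilon$ along a segment of length $\ge B$. The conjugate $hgh^{-1}$ is loxodromic of translation length $\tau$ with axis $h(L)$. A standard hyperbolic-geometric estimate shows that for $k\approx B/(2\tau)$ the element $g^k(hgh^{-1})^{-k}$---or a suitable alternating word in $g$ and $hgh^{-1}$ selected to guarantee loxodromicity---is either the identity (forcing $h(L)=L$) or has translation length strictly less than $\log\lambda_L$ once $B$ is large and $\epsilon$ small. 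The latter contradicts the gap property of Theorem~\ref{gapproperty}.

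The second piece is the monomial obstruction. If $g$ is conjugate to a loxodromic monomial $m\in\GL_2(\Z)$, then Example~\ref{monomialexample} shows that $D_2$ fixes $\Ax(m)$ pointwise, while by Lemma~\ref{puremonomial} a generic $d\in D_2$ satisfies $dm^nd^{-1}\ne m^{\pm n}$ for every $n>0$, so the tightness condition fails and no power of $g$ is tight.

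The third piece is tightness of a power in the non-monomial case. Assume $g$ is loxodromic and not conjugate to any monomial map, and let $F$ denote the pointwise stabilizer of $L$ in $\Cr_2(\C)$. Each element of $F$ fixes a point of $\h(\p^2)$, so $F$ consists of elliptic elements, and $g$ normalizes $F$ since it preserves $L$. If $F$ were infinite, the group $\langle F,g\rangle$ would satisfy the hypothesis of Theorem~\ref{ellitpicnorm} and be conjugate into $\GL_2(\Z)\ltimes D_2$; but every element of $\GL_2(\Z)\ltimes D_2$ is a monomial transformation by definition, so $g$ would be conjugate to a monomial map, contradicting the assumption. Therefore $F$ is finite. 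Restricting to $L$ yields a homomorphism $\stab(L)\to\operatorname{Isom}(L)\cong\Z/2\Z\ltimes\R$ with kernel $F$; the gap property forces the image to have discrete translation part, so $\stab(L)$ is virtually $\langle g\rangle$. For every $h\in\stab(L)$ we can write $hgh^{-1}=g^{\epsilon_h}f_h$ with $\epsilon_h=\pm 1$ and $f_h\in F$, and a direct $\phi$-twisted-product computation (where $\phi=\operatorname{Ad}(g)$ acts with finite order on the finite group $F$) shows $(g^{\epsilon_h}f_h)^n=g^{\epsilon_h n}$ as soon as $n$ is a sufficiently divisible positive integer. Choosing such an $n$ yields $hg^nh^{-1}=g^{\pm n}$ for every $h\in\stab(L)$, and combined with rigidity this proves $g^n$ is tight. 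The main obstacle is the rigidity estimate---quantifying ``fellow-traveling axes must coincide'' precisely enough that the gap property of Theorem~\ref{gapproperty} applies; once rigidity is in hand, the algebraic description of $\stab(L)$ afforded by Theorem~\ref{ellitpicnorm} does the rest.
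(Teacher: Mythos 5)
The paper does not prove this statement; it quotes it from \cite{Shepherd-Barron:2013qq} without proof, so I can only judge your attempt on its own terms. Your second and third pieces are sound in outline and, in particular, your use of Theorem~\ref{ellitpicnorm} to show that the pointwise stabilizer $F$ of the axis is finite (else $g$ would be of monomial type) and the twisted-power computation forcing $hg^nh^{-1}=g^{\pm n}$ on the setwise stabilizer are exactly the mechanism the paper attributes to Shepherd-Barron (``classified tight elements \dots using Theorem~\ref{ellitpicnorm}''). The citation of Lemma~\ref{puremonomial} in the monomial piece is slightly off --- that lemma conjugates $dm$ to $m$, whereas what you need is that the fixed locus of $\operatorname{Ad}(m^n)$ on $D_2$ is finite --- but Example~\ref{monomialexample} already gives the conclusion, so this is cosmetic.

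The genuine gap is in your first piece, rigidity. The dichotomy ``$g^k(hgh^{-1})^{-k}$ is either the identity or loxodromic of translation length $<\log\lambda_L$'' is false. What the fellow-travelling estimate actually gives is an isometry whose displacement is at most roughly $2\epsilon$ along a segment of length comparable to $B$; the gap property of Theorem~\ref{gapproperty} then only tells you this element is \emph{not} loxodromic, i.e.\ it is elliptic or parabolic, and a nontrivial elliptic or parabolic element yields no contradiction and does not force $h(\Ax(g))=\Ax(g)$. This is not a hypothetical worry: for a loxodromic monomial $m$ and $d\in D_2$ the element $m^{k}(dmd^{-1})^{-k}=m^kdm^{-k}d^{-1}$ is a nontrivial element of $D_2$, so even when the two axes coincide the word is a nontrivial elliptic element --- your claimed alternative simply does not hold. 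Your hedge ``a suitable alternating word selected to guarantee loxodromicity'' is precisely the missing idea: there is no soft hyperbolic-geometry reason such a word of small translation length exists, and if it always did, the gap property alone would prove rigidity in any group with a translation-length gap, which is too strong. Closing this case is the real content of the theorem: one must feed the resulting elliptic/parabolic elements (e.g.\ the conjugates $g^{-j}\gamma g^{j}$ of $\gamma=g^{-1}hgh^{-1}$, which generate a group normalized by $g$) into the Gizatullin--Diller--Favre classification (Theorem~\ref{dim2}) and Theorem~\ref{ellitpicnorm}, concluding that a failure of rigidity pushes $g$ into $\GL_2(\Z)\ltimes D_2$, where rigidity is then checked directly. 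As written, your rigidity step does not go through.
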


Note that if $G\subset\Cr_2(\C)$ is a subgroup and $g\in \Cr_2(\C)$ is a rigid element, then $g$ is rigid in $G$ as well. The same is true for tight elements. However, there exist subgroups $G\subset\Cr_2(\C)$ and loxodromic elements $g\in G$ such that $g$ is tight in $G$ but not in $\Cr_2(\C)$. From the proof of Theorem \ref{shep} (see \cite[p.18]{Shepherd-Barron:2013qq}) and Lemma~\ref{densed2} the following Theorem follows:

\begin{theorem}\label{sheprel}
	Let $G\subset\Cr_2(\C)$ be a subgroup and $g\in G$ a loxodromic element. The following two conditions are equivalent:
	\begin{enumerate}
		\item no power of $g$ is tight in $G$;
		\item there is a subgroup $\Delta_2\subset G$ that is normalized by $g$ and a birational transformation $f\in\Cr_2(\C)$ such that $f\Delta_2f^{-1}\subset D_2$ is a dense subgroup and $fgf^{-1}\in\GL_2(\Z)\ltimes D_2$.
	\end{enumerate}
\end{theorem}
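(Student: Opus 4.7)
The implication $(2)\Rightarrow(1)$ is direct. After replacing $g$ by $fgf^{-1}$ and $\Delta_2$ by $f\Delta_2 f^{-1}$, one may assume $g\in\GL_2(\Z)\ltimes D_2$ and $\Delta_2\subset D_2$ is Zariski-dense. By Example~\ref{monomialexample}, every element of $D_2$ fixes $\Ax(g)$ pointwise. For fixed $n\geq 1$, the set $\{d\in D_2 : dg^nd^{-1}=g^n\}$ is the fixed subgroup of the action of $M^n$ on $D_2$, which is finite because $g$ loxodromic implies that $M^n-\id$ has non-zero determinant (this is the core computation of Lemma~\ref{puremonomial}). The equation $dg^nd^{-1}=g^{-n}$ has no solution at all, since $dg^nd^{-1}$ has $\GL_2(\Z)$-component $M^n\neq M^{-n}$. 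Hence $\{d\in D_2 : dg^nd^{-1}=g^{\pm n}\}$ is a proper Zariski-closed subset of $D_2$, and density of $\Delta_2$ produces a $d\in\Delta_2$ witnessing that $g^n$ is not tight in~$G$.

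For $(1)\Rightarrow(2)$, I first observe that tightness in $\Cr_2(\C)$ implies tightness in~$G$, since both rigidity and the conjugation condition are universally quantified. So the hypothesis of (1) rules out tight powers of $g$ in all of $\Cr_2(\C)$, and Theorem~\ref{shep} supplies some $f\in\Cr_2(\C)$ with $fgf^{-1}\in\GL_2(\Z)\ltimes D_2$.

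The core step is to produce an infinite group of elliptic elements inside $G$ normalized by~$g$. Let $K_G:=\{h\in G : h\text{ fixes }\Ax(g)\text{ pointwise}\}$; each such $h$ is elliptic (it fixes an entire geodesic of $\h(\p^2)$), and $K_G$ is normalized by $g$ because $g$ preserves $\Ax(g)$. For each $n\geq 1$, non-tightness of $g^n$ yields $h_n\in\stab_G(\Ax(g))$ with $h_ng^nh_n^{-1}\neq g^{\pm n}$; comparing translation lengths on $\Ax(g)$ shows that $h_ng^nh_n^{-1}g^{-\epsilon_n n}\in K_G\setminus\{\id\}$, where $\epsilon_n\in\{\pm 1\}$ is the orientation of $h_n$ on $\Ax(g)$. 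I claim $K_G$ is infinite. Otherwise, $g$ acts on $K_G$ by an automorphism of finite order, and the image of $\stab_G^+(\Ax(g))/K_G$ in $\Isom^+(\Ax(g))$ is discrete (by the gap property, Theorem~\ref{gapproperty}) and hence cyclic; since $\Z$ has cohomological dimension one, the extension $1\to K_G\to\stab_G^+(\Ax(g))\to\Z\to 1$ splits as a semidirect product. A direct cocycle computation in this semidirect product, together with the analogous analysis for the (virtually order-two) reflection part, produces an integer $n$ for which every $h\in\stab_G(\Ax(g))$ satisfies $hg^nh^{-1}=g^{\pm n}$, contradicting~(1). This is precisely Shepherd-Barron's cocycle argument from the proof of Theorem~\ref{shep}, carried out inside the subgroup~$G$.

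Once $K_G$ is infinite, Theorem~\ref{ellitpicnorm} applied to $\langle g,K_G\rangle\subset\Cr_2(\C)$ yields $f'\in\Cr_2(\C)$ with $f'\langle g,K_G\rangle f'^{-1}\subset\GL_2(\Z)\ltimes D_2$. The projection $f'K_Gf'^{-1}\to\GL_2(\Z)$ lands in the torsion subgroup, since any element of $\GL_2(\Z)\ltimes D_2$ whose $\GL_2(\Z)$-part is loxodromic or parabolic is itself non-elliptic in $\Cr_2(\C)$, whereas $K_G$ consists of elliptic elements. Torsion subgroups of $\GL_2(\Z)$ are finite (the group is virtually free), so $f'K_Gf'^{-1}\cap D_2$ has finite index in $f'K_Gf'^{-1}$ and is still infinite; being normalized by the loxodromic monomial $f'gf'^{-1}$, it is Zariski-dense in $D_2$ by Lemma~\ref{densed2}. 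Taking $\Delta_2:=K_G\cap f'^{-1}D_2f'\subset G$ and $f:=f'$, one checks that $\Delta_2$ is normalized by $g$ (since $g$ normalizes both $K_G$ and $f'^{-1}D_2f'$) and $f\Delta_2 f^{-1}=f'K_Gf'^{-1}\cap D_2$ is dense in $D_2$, proving~(2). The main obstacle is the cocycle argument that forces $K_G$ to be infinite; the remaining steps are formal consequences of Theorem~\ref{ellitpicnorm} and Lemma~\ref{densed2}.
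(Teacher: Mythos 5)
Your proof is correct and follows the route the paper intends: the paper gives no self-contained argument, stating only that the theorem ``follows from the proof of Theorem \ref{shep} (see [Shepherd-Barron, p.18]) and Lemma~\ref{densed2}'', and your argument is precisely an unpacking of that citation --- relativize Shepherd-Barron's analysis of the stabilizer of $\Ax(g)$ to the subgroup $G$ to produce the infinite elliptic group $K_G$ normalized by $g$, then feed it to Theorem~\ref{ellitpicnorm} and Lemma~\ref{densed2}. The one step you leave at the same level of detail as the paper's citation is the cocycle computation showing that finiteness of the pointwise stabilizer of $\Ax(g)$ in $G$ forces some power of $g$ to be tight in $G$ (it does complete: the gap property makes the image of $\stab_G(\Ax(g))$ in the isometries of the axis discrete, a suitable power of $g$ becomes central in the orientation-preserving part, and a further power handles the orientation-reversing elements); everything else, including the $(2)\Rightarrow(1)$ direction via the determinant computation from Lemma~\ref{puremonomial}, checks out.
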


From Theorem \ref{smallcanc} and Theorem \ref{sheprel} one deduces directly the following lemma:

\begin{lemma}\label{sheplemma}
	Let $G\subset\Cr_2(\C)$ be a simple subgroup. Then for every loxodromic element $g\in G$ there exists an infinite subgroup $\Delta_2^g\subset G$ and an element $f\in\Cr_2(\C)$ that conjugates $\Delta_2^g$ to a dense subgroup of $D_2$ and $g$ to an element of $\GL_2(\Z)\ltimes D_2$. 
\end{lemma}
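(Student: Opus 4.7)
The plan is to derive the lemma as an immediate formal consequence of Theorems \ref{smallcanc} and \ref{sheprel}: the simplicity hypothesis on $G$ forbids the existence of any tight element in $G$, and the equivalence in Theorem \ref{sheprel} then hands us exactly the conjugation data we seek.

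First I would fix a loxodromic element $g\in G$ and show that no power of $g$ is tight in $G$. Suppose for contradiction that $g^n$ were tight in $G$ for some $n\geq 1$. Applying Theorem \ref{smallcanc} inside $G$ to the tight element $g^n$ produces a proper non-trivial normal subgroup of $G$: the subgroup $\langle\langle g^{2n}\rangle\rangle$ contains $g^{2n}\neq \id$ (since $g$ is loxodromic, so is $g^{2n}$, in particular it has infinite order), hence is non-trivial, and by the ``in particular'' clause of Theorem \ref{smallcanc} it does not contain $g^n$, hence is proper. This contradicts the simplicity of $G$, so no power of $g$ is tight in $G$.

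Next I would invoke the equivalence of Theorem \ref{sheprel} for $g\in G$: since condition (1) fails, condition (2) must hold, producing an element $f\in\Cr_2(\C)$ and a subgroup $\Delta_2^g\subset G$ normalized by $g$ such that $f\Delta_2^g f^{-1}\subset D_2$ is dense and $fgf^{-1}\in\GL_2(\Z)\ltimes D_2$. Finally, because $D_2\cong(\C^*)^2$ is infinite and every finite subset is Zariski closed, a dense subgroup of $D_2$ cannot be finite, so $\Delta_2^g$ is automatically infinite, as required.

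There is essentially no obstacle here: the substantial work has already been carried out in Theorems \ref{smallcanc} and \ref{sheprel}, and the present lemma is just the bookkeeping that combines them. The only point requiring any care is the very first step, where one has to unwind Theorem \ref{smallcanc} from its stated form (``$G$ is not simple'') into the explicit exclusion of tight powers of a given loxodromic element, after which Theorem \ref{sheprel} finishes the proof mechanically.
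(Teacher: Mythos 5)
Your proposal is correct and follows exactly the route the paper intends: the paper simply asserts that the lemma "is deduced directly" from Theorems \ref{smallcanc} and \ref{sheprel}, and your write-up is precisely that deduction spelled out (tightness of a power of $g$ would contradict simplicity via the normal subgroup $\langle\langle g^{2n}\rangle\rangle$, so condition (2) of Theorem \ref{sheprel} holds, and density of $f\Delta_2^g f^{-1}$ in the positive-dimensional group $D_2$ forces $\Delta_2^g$ to be infinite).
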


In positive characteristic Theorem \ref{ellitpicnorm} does no longer hold, as the following example shows:

\begin{example}
	Let $\kk$ be a field of characteristic $p$. The loxodromic element $(y, x+y^p)$ normalizes the additive group of elliptic elements $\kk^2$. 
\end{example} 

However, it turns out that these kind of examples are the only exceptions (see \cite{Shepherd-Barron:2013qq} and \cite{cantatrev}). We only need the following result, which follows from the proof of Theorem 7.2 in  \cite[p.18]{Shepherd-Barron:2013qq}:

\begin{theorem}
	Let $\kk$ be an algebraically closed field and $G\subset\Cr_2(\kk)$ a subgroup. Let $g\in G$ be a loxodromic element such that no power of $g$ is tight then $g$ normalizes an infinite group of elliptic elements that is either conjugate to a subgroup of $D_2$ or to a subgroup of $\kk^2$.
\end{theorem}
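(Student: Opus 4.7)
The proof follows \cite[proof of Theorem 7.2, p.\,18]{Shepherd-Barron:2013qq}, whose argument is not specific to characteristic zero; the positive-characteristic alternative $\kk^2$ arises precisely where the characteristic-zero argument rules out unipotent factors.

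\textbf{Step 1: produce an infinite pointwise axis-stabilizer in $G$.} Since no power of $g$ is tight in $G$, for each $n$ we are in one of two situations. Either $g^n$ fails to be rigid, so there exist $h \in G$ with $h(\Ax(g^n))$ arbitrarily $(\epsilon,B)$-close to but distinct from $\Ax(g^n)$; or $g^n$ is rigid but some $h \in G$ satisfies $h(\Ax(g^n)) = \Ax(g^n)$ and $hg^n h^{-1} \notin \{g^n, g^{-n}\}$. In the second case, $h$ acts on $\Ax(g^n)$ by a translation or a reflection, so $hg^n h^{-1} g^{\mp n}$ pointwise-fixes $\Ax(g)$; letting $n$ vary produces infinitely many such elements. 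In the first case, a commutator-and-limit argument in the spirit of the proof of \cite[Theorem 2.10]{cantat2013normal} yields elements whose displacement on ever longer subsegments of $\Ax(g)$ tends to zero, hence fix $\Ax(g)$ pointwise in the limit. Let $A \subset G$ be the infinite subgroup generated by these fixators. All elements of $A$ are elliptic, and $A$ is normalized by $g$ because $g$ preserves its own axis.

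\textbf{Step 2: algebraize $A$.} An element of $\Cr_2(\kk)$ fixing a positive-square vector in the Picard--Manin space is of bounded degree, so $A$ is of bounded degree and its Zariski closure $\bar A \subset \Cr_2(\kk)$ is an algebraic subgroup \cite{MR3092478}. After replacing $g$ by a suitable power, $g$ normalizes the identity component $\bar A^0$, which is a connected, positive-dimensional algebraic subgroup consisting of elliptic elements and still pointwise-fixing $\Ax(g)$.

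\textbf{Step 3: classify $\bar A^0$.} As a connected algebraic subgroup of $\Cr_2(\kk)$ without loxodromic elements, $\bar A^0$ is solvable of dimension at most $2$. Its maximal torus is conjugate to a subtorus of $D_2$ by the conjugacy of maximal tori in $\Cr_2(\kk)$. If the unipotent radical of $\bar A^0$ is trivial, then $\bar A^0$ is conjugate to a subgroup of $D_2$ and we are done. Otherwise, the unipotent radical is nontrivial---which can only happen in positive characteristic---and the analysis of \cite[p.\,18]{Shepherd-Barron:2013qq}, using the constraint that $g$ normalizes $\bar A^0$, forces this unipotent part to be conjugate to a subgroup of the translation group $\kk^2 \subset \Bir(\p^2)$, the Frobenius-twisted map $(y, x + y^p)$ being the prototype.

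The principal obstacle is Step~3: one must rule out more exotic connected unipotent subgroups of $\Cr_2(\kk)$ (for instance sitting inside $\aut(\F_n)$ or inside the automorphism group of a conic bundle) that might a priori pointwise-fix $\Ax(g)$. The rigidity comes from the existence of the loxodromic normalizer $g$, whose conjugation action severely constrains the shape of the unipotent group and forces it to be the additive translation group on some affine chart. In characteristic zero the whole unipotent branch is excluded---this is exactly Theorem~\ref{ellitpicnorm}---whereas in positive characteristic $\kk^2$ genuinely arises.
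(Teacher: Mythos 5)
First, be aware that the paper contains no proof of this statement to compare yours against: it is imported as a black box, with the remark that it ``follows from the proof of Theorem 7.2 in \cite[p.18]{Shepherd-Barron:2013qq}''. Your three-step outline --- extract an infinite pointwise stabilizer of $\Ax(g)$ inside $G$, algebraize it, then classify the identity component of its closure using the loxodromic normalizer --- is indeed the architecture of the cited argument and of Cantat's Theorem 7.1 in \cite{MR2881312}, of which Theorem \ref{ellitpicnorm} is the characteristic-zero output. So the route is the right one; the issues are in how you execute two of the steps.

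In Step 1, producing one nontrivial pointwise fixator of $\Ax(g)$ for each $n$ does not yet yield an \emph{infinite} group $A$: those elements could all coincide. The actual argument is the contrapositive: if the pointwise fixator $A$ were finite, then $\stab_G(\Ax(g))/A$ embeds into $\operatorname{Isom}(\R)$, and its translation part is generated by translation lengths of loxodromic elements of $\Cr_2(\kk)$, which by the gap property (Theorem \ref{gapproperty}) form a discrete, hence cyclic, subgroup of $\R_{>0}$ under $\log$; from this one checks that a suitable power of $g$ is tight, contrary to hypothesis. Your non-rigid branch is also not needed --- rigidity of every loxodromic element is part of Theorem \ref{shep} and of what Shepherd-Barron proves --- and the ``commutator-and-limit'' sketch would require genuine work if it were. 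In Step 3, the claim that $\bar A^0$ is solvable of dimension at most $2$ is asserted rather than proved, and it is not where the argument actually goes: the correct use of the normalizer is that $g$ permutes the closures of $\bar A^0$-orbits, so a generic one-dimensional orbit would force $g$ to preserve a fibration, contradicting Theorem \ref{dim2}; hence $\bar A^0$ acts with an open orbit, and one then classifies two-dimensional homogeneous spaces under connected groups and checks which equivariant compactifications admit a normalizing loxodromic element --- this is exactly where $D_2$ and, in positive characteristic only, $\kk^2$ survive. Finally, note that the application in Proposition \ref{mainfg} needs the infinite elliptic group to lie in $G$ itself; your $\bar A^0$ need not, so you should return at the end to $A\subset G$ (or to $A\cap \bar A^0$, still infinite and normalized by a power of $g$), which is harmless since only the uniform degree bound is used downstream.
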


\subsection{Non-rational surfaces}\label{nonrational}

In this section we treat the case of non-rational compact complex K\"ahler surfaces of Theorem \ref{mainthm}. 

\begin{lemma}\label{negativekodaira}
	Let $S$ be a non-rational compact complex K\"ahler surface of Kodaira dimension $-\infty$ and $G\subset \Bir(S)$ a simple subgroup. Then $G$ is either finite or isomorphic to a subgroup of $\PGL_2(\C)$.
\end{lemma}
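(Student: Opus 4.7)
The plan is to leverage the (essentially unique) ruling of $S$. Since $S$ is a non-rational K\"ahler surface of Kodaira dimension $-\infty$, its minimal model is a $\p^1$-bundle over a smooth projective curve $C$ of genus $g(C)\geq 1$, and consequently $S$ is birational to $C\times\p^1$. Because $C$ carries no rational curves, every rational curve in $C\times\p^1$ is contained in a fibre of the projection $\pi\colon C\times\p^1\to C$, so every birational self-map of $S$ permutes these fibres. I will first promote this to the split short exact sequence
\[
1\to\PGL_2(\C(C))\to\Bir(S)\xrightarrow{\rho}\aut(C)\to 1,
\]
split by $\sigma\mapsto\sigma\times\id_{\p^1}$, and then dichotomize on $N\coloneqq G\cap\PGL_2(\C(C))$, which is normal in the simple group $G$ and therefore equal to $\{1\}$ or to $G$.

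If $N=\{1\}$, then $\rho$ restricts to an embedding $G\hookrightarrow\aut(C)$. For $g(C)\geq 2$ the group $\aut(C)$ is finite by Hurwitz. For $g(C)=1$ the group $\aut(C)$ is an extension of a finite group by the translation subgroup $C$, and since $G\cap C$ is normal and abelian in $G$ it is either trivial (in which case $G$ injects into the finite quotient $\aut(C)/C$) or equal to $G$ (in which case $G$ is abelian simple, hence $G\simeq\Z/p\Z$). In all situations $G$ is finite.

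The substantial case is $N=G$, that is $G\subset\PGL_2(\C(C))$. Here my plan is to observe that $\C(C)$ admits an abstract field embedding into $\C$: both are fields of characteristic zero and of transcendence degree $2^{\aleph_0}$ over $\Q$, so by Steinitz I can inject a transcendence basis of $\C(C)/\Q$ into a transcendence basis of $\C/\Q$ and extend algebraically, using that $\C$ is algebraically closed. Applied entrywise, any such field embedding yields an injective group homomorphism $\PGL_2(\C(C))\hookrightarrow\PGL_2(\C)$, which exhibits $G$ as a subgroup of $\PGL_2(\C)$. I expect the only delicate step to be the very first reduction, verifying that $\Bir(S)\cong\PGL_2(\C(C))\rtimes\aut(C)$ for every non-rational K\"ahler surface with $\kappa(S)=-\infty$; once this structural fact is in place, the rest is essentially pure group theory together with the Steinitz-style field embedding.
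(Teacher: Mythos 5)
Your proposal is correct and follows essentially the same route as the paper: reduce to $\Bir(S)\simeq\PGL_2(\C(C))\rtimes\aut(C)$, use simplicity to place $G$ in the kernel or the quotient, embed $\C(C)$ into $\C$ abstractly in the first case, and use finiteness of $\aut(C)$ up to a normal abelian subgroup in the second. You merely spell out the details (Steinitz embedding, the genus $1$ subcase) that the paper leaves implicit.
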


\begin{proof}
	There exists a non-rational curve $C$ such that $S$ is birationally equivalent to $\p^1\times C$, hence $\Bir(S)=\PGL_2(\C(C))\rtimes \aut(C)$. It follows therefore that $G\subset\PGL_2(\C(C))$ or $G\subset \aut(C)$. In the first case we are done, since the function field $\C(C)$ can be embedded into $\C$ and hence $\PGL_2(\C(C))$ is a subgroup of $\PGL_2(\C)$. In the second case the lemma follows since $\aut(C)$ is either finite or contains a normal abelian subgroup of finite index. 
\end{proof}

\begin{lemma}\label{nonnegativekodaira}
	Let $S$ be a compact complex K\"ahler surface of non-negative Kodaira dimension and let $G\subset \Bir(S)$ be a simple subgroup. Then $G$ is finite.
\end{lemma}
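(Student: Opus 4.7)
The plan is to reduce to the minimal model and then exploit the well-known structure of its automorphism group. Since $S$ has non-negative Kodaira dimension, it admits a unique minimal model $S_{\min}$ in its birational class, and any birational map between minimal compact Kähler surfaces of non-negative Kodaira dimension is an isomorphism. Hence $\Bir(S) = \Bir(S_{\min}) = \aut(S_{\min})$, and I may assume $S$ is minimal and identify $G$ with a simple subgroup of $\aut(S)$.

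Next I would invoke the classical structure theorem for $\aut(S)$ of a compact Kähler manifold (Lieberman--Fujiki): there is an exact sequence
\[
1 \longrightarrow \aut^0(S) \longrightarrow \aut(S) \longrightarrow L \longrightarrow 1,
\]
where $\aut^0(S)$ is the connected component of the identity and the natural action of $L$ on $H^2(S,\Z)$ has finite kernel. For a minimal compact Kähler surface of non-negative Kodaira dimension, the identity component $\aut^0(S)$ is abelian: it is trivial if $S$ is of general type, K3, Enriques, or properly elliptic of general type on the base, and otherwise it is a complex torus (e.g.\ $S$ itself for an abelian surface, an elliptic curve for a bielliptic surface).

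Now I would play the simplicity of $G$ against this sequence. The intersection $N \coloneqq G \cap \aut^0(S)$ is normal in $G$, so it equals either $G$ or $\{e\}$. If $N = G$, then $G$ is an abelian simple group, hence cyclic of prime order and in particular finite. Otherwise $G$ embeds in $L$. Letting $F \subset L$ denote the finite kernel of $L \to \GL(H^2(S,\Z))$, the intersection $G \cap F$ is normal in $G$; if it equals $G$ then $G$ is finite, and otherwise $G$ embeds into the linear group $\GL(H^2(S,\Z)) \subset \GL_n(\Z)$ for some $n$.

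The final step is to observe that any simple subgroup of $\GL_n(\Z)$ is finite. Since $\GL_n(\Z)$ is residually finite, for any nontrivial $g \in G$ there exists a finite-index normal subgroup $M \trianglelefteq \GL_n(\Z)$ with $g \notin M$. The intersection $G \cap M$ is a proper normal subgroup of $G$, so by simplicity it is trivial, and thus $G$ injects into the finite quotient $\GL_n(\Z)/M$. The main obstacle is really just citing the correct structural ingredients (existence and uniqueness of the minimal model, abelianness of $\aut^0$, Lieberman--Fujiki finiteness); once those are in place, the argument is a routine combination of simplicity with residual finiteness.
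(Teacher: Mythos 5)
Your proof is correct and follows essentially the same route as the paper: pass to the unique minimal model so that $\Bir(S)=\aut(S')$, use the action on cohomology whose kernel is virtually a complex torus (the linear part being trivial because a surface of non-negative Kodaira dimension is not uniruled), and then combine simplicity with residual finiteness of $\GL_n(\Z)$ and the virtually abelian kernel. The only cosmetic difference is that you phrase the structure of $\aut(S')$ via the Lieberman--Fujiki sequence on $H^2$, while the paper works with the representation on all of $H^*(S';\Z)$ and cites the algebraicity of its kernel; the content is the same.
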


\begin{proof}
The class of compact complex K\"ahler surfaces that are birationally equivalent to $S$ contains a unique smooth minimal model $S'$. It follows that $\Bir(S)\simeq\Bir(S')=\aut(S')$. The group $\aut(S')$ acts by linear transformations on the cohomology. This gives a linear representation $\varphi\colon \aut(S')\to\GL(H^*(S'; \Z))$, where $H^*(S'; \Z)$ is the direct sum of the cohomology groups of $S'$. The kernel of $\varphi$ is an algebraic group (see \cite{MR521918}) and hence and extension of a complex torus by a complex linear algebraic group. Let $G\subset\aut(S')$ be a simple group, then either $G$ is contained in $\GL_n(\Z)$ for some $n$, and therefore finite. Or $G$ is isomorphic to a subgroup of an algebraic group. Since $S'$ is of non-negative Kodaira dimension, there are no linear algebraic groups of positive dimension operating on $S'$, since otherwise $S'$ would be uniruled.  Hence $H$ is abelian up to finite index and therefore $G$ is finite.
%(\cite[Theorem 14.1]{Uedo classification theory}).
\end{proof}

\section{Subgroups containing loxodromic elements}\label{loxosection}
In all of Section \ref{loxosection} we always work over the field $\C$ of complex numbers. The main result of this section is the following:

\begin{theorem}\label{loxomain}
	A simple subgroup $G\subset\Cr_2(\C)$ does not contain any loxodromic element.
\end{theorem}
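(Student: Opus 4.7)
The plan is to argue by contradiction. Suppose $G \subset \Cr_2(\C)$ is simple and contains a loxodromic element $g$. Theorem \ref{smallcanc} shows that no power of $g$ can be tight in $G$ (otherwise $G$ would admit a proper non-trivial normal subgroup). Applying Theorem \ref{sheprel} (conveniently packaged as Lemma \ref{sheplemma}) and replacing $G$ by an appropriate conjugate in $\Cr_2(\C)$, I arrange that $g \in \GL_2(\Z) \ltimes D_2$ and that $G$ contains an infinite subgroup $\Delta_2 \subset D_2$ which is Zariski-dense in $D_2$ and is normalized by $g$. Using Lemma \ref{puremonomial} and conjugating by a further element of $D_2$ (which leaves $\Delta_2$ invariant since $D_2$ is abelian), I may even assume $g \in \GL_2(\Z)$.

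The strategy is now to prove the stronger statement that $G \subset \GL_2(\Z) \ltimes D_2$. Granted this, $G \cap D_2$ is an abelian normal subgroup of $G$ (the kernel of the projection $G \to \GL_2(\Z)$ coming from the semi-direct product structure) that contains the infinite group $\Delta_2$. Simplicity then forces $G \cap D_2$ to be trivial (impossible, since $\Delta_2 \neq \{e\}$) or all of $G$ (impossible, since all elements of $D_2$ are elliptic but $g$ is loxodromic). A first useful observation is that any $h \in \Cr_2(\C)$ normalizing $\Delta_2$ also normalizes its Zariski closure $D_2$ and hence lies in $\norm_{\Cr_2(\C)}(D_2) = \GL_2(\Z) \ltimes D_2$; in particular $N_G(\Delta_2)$ is automatically contained in $\GL_2(\Z) \ltimes D_2$.

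To upgrade this to all of $G$, I pick an arbitrary $h \in G$ and apply Lemma \ref{sheplemma} to the loxodromic element $hgh^{-1} \in G$: this yields $f \in \Cr_2(\C)$ and an infinite subgroup $\Delta_2' \subset G$ with $f\Delta_2' f^{-1}$ dense in $D_2$ and $f(hgh^{-1})f^{-1} \in \GL_2(\Z) \ltimes D_2$. Since $g \in \GL_2(\Z)$ is loxodromic, Lemma \ref{loxogllemma} applied to the conjugator $fh$ gives $fh \in \GL_2(\Z) \ltimes D_2$, so $h \in f^{-1}(\GL_2(\Z) \ltimes D_2)$. It therefore remains to prove $f \in \GL_2(\Z) \ltimes D_2$; by the Zariski-closure argument of the preceding paragraph, this is equivalent to showing that $\Delta_2' \subset D_2$, i.e.\ that the rank-two torus attached to $hgh^{-1}$ through Lemma \ref{sheplemma} coincides with the torus $D_2$ attached to $g$.

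The hard part is precisely this consistency statement: I have to rule out that different loxodromic elements of $G$ are ``monomial with respect to'' genuinely distinct rank-two tori of $\Cr_2(\C)$. The plan is to apply the classification of groups of elliptic elements (Theorems \ref{ellipticelements} and \ref{cantatelliptic}) to $\langle \Delta_2, \Delta_2' \rangle \subset G$, combined with Theorem \ref{ellitpicnorm} (which, together with simplicity, forbids $G$ from containing infinite elliptic \emph{normal} subgroups) and the elementary fact that two distinct rank-two tori in $\Cr_2(\C)$ meet in an algebraic subgroup of rank at most one. These ingredients should force $\Delta_2'$ and $\Delta_2$ to lie in a common rank-two torus, hence $\Delta_2' \subset D_2$, giving $f \in \GL_2(\Z) \ltimes D_2$ and completing the contradiction. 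I expect the delicate bookkeeping in this last step — showing that the ``attached torus'' is canonical once $G$ is simple — to be the real obstacle.
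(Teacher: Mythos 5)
Your opening and closing moves match the paper's: Lemma \ref{sheplemma} gives that every loxodromic element of $G$ is of monomial type and comes with a dense elliptic subgroup, and the endgame (once $G\subset\GL_2(\Z)\ltimes D_2$ is known, the kernel of the projection to $\GL_2(\Z)$ is a proper infinite normal subgroup containing $\Delta_2$) is exactly the paper's final paragraph. The application of Lemma \ref{loxogllemma} to the conjugator $fh$ is also correct. But the entire content of the theorem is concentrated in the step you defer, and your sketch of that step does not work.

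The gap is this: you propose to show $\Delta_2'\subset D_2$ by applying Theorems \ref{ellipticelements} and \ref{cantatelliptic} to $\langle\Delta_2,\Delta_2'\rangle$. Those theorems apply only to groups all of whose elements are elliptic, and $\langle\Delta_2,\Delta_2'\rangle$ has no reason to be such a group: $\Delta_2\subset D_2$ and $\Delta_2'\subset f^{-1}D_2f$ are elliptic subgroups lying in two different tori, and products of elements from two non-commuting tori are generically loxodromic. Nor does Theorem \ref{ellitpicnorm} help, since neither $\Delta_2$ nor $\Delta_2'$ is normal in $G$. Worse, the conjugator $f$ produced by Lemma \ref{sheplemma} is far from canonical --- for $h\in G$ arbitrary one may always take $\Delta_2'=h\Delta_2h^{-1}$ and $f=h^{-1}$, in which case ``$f\in\GL_2(\Z)\ltimes D_2$'' is literally the statement you are trying to prove, so no soft argument about the attached torus being canonical can close the loop. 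This is precisely where the paper does something entirely different and much heavier: it studies which curves elements of $G$ can contract and to which points of the bubble space (Sections \ref{degreebounds} and \ref{basepoints}), proves via degree bounds (Lemma \ref{degreebound}, resting on Theorem \ref{cantablancdeg}) and a uniform bound on the number of contracted components of iterates of monomial-type maps (Lemma \ref{boundedei}) that a simple $G$ containing a loxodromic monomial element contracts only toric curves to toric points (Lemma \ref{mainlemmalox}), and then deduces from this geometric rigidity that every loxodromic element of $G$ actually lies in $\GL_2(\Z)\ltimes D_2$ (Lemma \ref{onlytoric}). None of that machinery is replaced by anything in your proposal, so the argument as written is incomplete at its central point.
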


The starting point to prove Theorem \ref{loxomain} is Lemma \ref{sheplemma}. It states that all loxodromic elements in a simple group $G$ are of monomial type and that, up to conjugation, $G$ contains a dense subgroup $\Delta_2\subset D_2$ for each of its loxodromic elements. Our strategy is to show that if $G$ contains a loxodromic element, these conditions imply that $G$ is conjugate to a subgroup of $\GL_2(\Z)\ltimes D_2$ and from this we will deduce a contradiction to the simplicity of $G$.

In Section~\ref{degreebounds} we first prove a result about the degrees of elements that conjugate loxodromic elements to monomial elements. In Section \ref{basepoints} we take a closer look at the dynamical behavior of exceptional curves and base-points. This will allow us to prove Theorem \ref{loxomain} in Section \ref{proofofloxomain}.

\subsection{Degree bounds}\label{degreebounds}
We start with some facts about loxodromic monomial elements.

\begin{lemma}\label{conjugpos}
	Let $A\in\SL_2(\Z)$ be a loxodromic element. Then either $A$ or $-A$ is conjugate in $\GL_2(\Z)$ to a matrix $B$ such that all entries of $B$ are non-negative.
\end{lemma}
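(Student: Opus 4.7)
First, I would reduce to the case that both eigenvalues of $A$ are positive. Being loxodromic in $\SL_2(\Z)$ forces $|\tr A| > 2$, so $A$ has two distinct real eigenvalues $\lambda, \lambda^{-1}$ with $|\lambda| > 1$; since $(\tr A)^2 - 4$ is never a perfect square when $|\tr A| > 2$, they are irrational. Replacing $A$ by $-A$ if $\tr A < -2$, we may assume $\lambda > 1 > \lambda^{-1} > 0$. Let $L_+, L_-$ be the associated eigenlines in $\R^2$, of irrational slopes $\alpha, \beta$.

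The plan is then to find a basis $(e_1, e_2)$ of $\Z^2$ such that the convex cone $C = \R_{\geq 0}e_1 + \R_{\geq 0}e_2$ is $A$-invariant. Indeed, in such a basis the matrix $B$ of $A$ has non-negative entries, and since the change-of-basis lies in $\GL_2(\Z)$, this is the required $\GL_2(\Z)$-conjugation. To build $(e_1,e_2)$ I would use continued fractions on $\alpha$: take two consecutive convergents $p_n/q_n$ and $p_{n+1}/q_{n+1}$, which bracket $\alpha$ and satisfy the Farey relation $|p_n q_{n+1} - p_{n+1} q_n| = 1$. This identity guarantees that the integer vectors $e_1 = (q_n, p_n)$ and $e_2 = (q_{n+1}, p_{n+1})$ form a $\Z$-basis of $\Z^2$ (after ordering so that the slopes satisfy $p/q < p'/q'$). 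For $n$ sufficiently large, the arc $[p_n/q_n, p_{n+1}/q_{n+1}] \subset \p^1(\R)$ is short enough to exclude $\beta$.

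Next, to verify $A(C) \subseteq C$, I would pass to the projective action. The induced Möbius transformation of $\p^1(\R)$ is orientation-preserving with $\alpha$ attracting and $\beta$ repelling, and therefore preserves each connected component of $\p^1(\R) \setminus \{\beta\}$. The chosen arc contains $\alpha$ in its interior and lies in one such component, so it is mapped strictly inside itself. Lifting back to $\R^2$---using $\lambda > 0$ so that the positive $L_+$-ray is preserved---yields $A(C) \subseteq C$. In particular $Ae_1, Ae_2 \in C$, so the matrix of $A$ in the basis $(e_1, e_2)$ has non-negative entries, proving the lemma.

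The main technical point is achieving the three properties---bracketing $\alpha$, excluding $\beta$, and giving a $\Z$-basis of $\Z^2$---simultaneously. But this is precisely what the classical continued fraction algorithm delivers: the Farey relation on consecutive convergents handles the $\Z$-basis condition automatically, and taking the index $n$ large makes the arc as short as desired so that $\beta$ falls outside it.
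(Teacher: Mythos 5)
Your proof is correct, but it takes a genuinely different route from the paper. The paper does not argue from scratch: it cites a known classification result stating that for a loxodromic $A\in\SL_2(\Z)$, either $A$ or $-A$ is $\GL_2(\Z)$-conjugate to a matrix $\left(\begin{smallmatrix} a & c\\ b & d\end{smallmatrix}\right)$ with $d>b>a\geq 0$, and then finishes with a two-line computation: if $a\neq 0$ the determinant relation $ad-bc=1$ forces $c\geq 0$, and if $a=0$ a single conjugation by $\left(\begin{smallmatrix}1 & 1\\ 0 & 1\end{smallmatrix}\right)$ produces a non-negative loxodromic representative. Your argument is instead self-contained and geometric: after normalizing the sign so that $\lambda>1>\lambda^{-1}>0$, you build an $A$-invariant rational convex cone whose extremal rays form a $\Z$-basis, using consecutive continued-fraction convergents of the attracting slope $\alpha$ (the Farey identity $|p_nq_{n+1}-p_{n+1}q_n|=1$ gives the basis condition, shortness of the arc for large $n$ excludes the repelling slope $\beta$, and positivity of $\lambda$ ensures the cone itself, not just its projectivization, is preserved). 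All the steps check out, including the delicate points: irrationality of the eigenvalues when $|\tr A|>2$, the fact that $b\neq 0$ so the eigenlines have finite irrational slope, and the lift from the invariant arc in $\p^1(\R)$ to the invariant cone in $\R^2$. What your approach buys is independence from the cited reference and a conceptual explanation (a Perron--Frobenius-type invariant cone, which is the classical link between hyperbolic elements of $\SL_2(\Z)$ and continued fractions); what the paper's approach buys is brevity, at the cost of importing a nontrivial external theorem.
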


\begin{proof}
	In \cite[Theorem 7.3]{MR3099298} it is shown that  for a loxodromic element $A\in\SL_2(\Z)$ either $A$ or $-A$ is conjugate to a matrix of the form $\left(\begin{array}{cc}
		a & c \\ 
		b & d
	\end{array} \right)$, where $d>b>a\geq 0$. If $a\neq 0$ then $ad-bc=1$ implies $c\geq 0$ and we are done. If $a=0$, then we calculate 
	\[\left(\begin{array}{cc}
	1 & 1 \\ 
	0 & 1
	\end{array} \right)\left(\begin{array}{cc}
		0 & c \\ 
		b & d
	\end{array} \right)\left(\begin{array}{cc}
	1 & -1 \\ 
	0 & 1
	\end{array} \right)=\left(\begin{array}{cc}
	b & c+d-b \\ 
	b & d-b
	\end{array} \right).\]
	Since $d-b$ and $b$ are positive, it follows that $c+d-b\geq 0$. If $c+d-b= 0$ the matrix is not loxodromic anymore, hence $c+d-b>0$ and we are done.
\end{proof}

With the help of Lemma \ref{conjugpos} the following well-known Lemma can be proved:

\begin{lemma}\label{conjuggl}
	For an integer $n\in\Z$ there exist only finitely many conjugacy classes of loxodromic elements in $\GL_2(\Z)$ with trace $n$. 
\end{lemma}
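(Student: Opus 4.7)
The plan is to use Lemma \ref{conjugpos} to replace each conjugacy class with a representative having non-negative entries, and then bound the number of such representatives in terms of $n$. First, since $A \mapsto -A$ is an involution on $\GL_2(\Z)$ that preserves loxodromicity and sends trace $n$ to trace $-n$, it induces a bijection between the conjugacy classes of trace $n$ and those of trace $-n$, so we may assume $n \geq 0$. Lemma \ref{conjugpos} then shows that every loxodromic $A \in \SL_2(\Z)$ of trace $n$ is $\GL_2(\Z)$-conjugate (possibly after passing from $A$ to $-A$) to a matrix $B$ with all non-negative entries. For $A \in \GL_2(\Z)$ with $\det(A) = -1$, the same row-and-column reductions that underlie the proof of \cite[Theorem 7.3]{MR3099298} yield the analogous normal form, since those reductions involve only elementary moves in $\GL_2(\Z)$ that work irrespective of the sign of the determinant.

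Given such a non-negative representative $B = \begin{pmatrix} a & c \\ b & d \end{pmatrix}$ with $a + d = n$ and $ad - bc = \pm 1$, a direct count finishes the argument. There are only $n + 1$ non-negative diagonal pairs $(a,d)$ with $a + d = n$. For each such pair, $bc$ is forced to equal the fixed non-negative integer $ad \mp 1$. If this value is zero then $B$ is triangular and its eigenvalues are the integers $a$ and $d$; but a loxodromic matrix in $\GL_2(\Z)$ necessarily has irrational eigenvalues (the discriminant $n^2 \mp 4$ of its characteristic polynomial is a perfect square only in the excluded non-loxodromic cases), a contradiction. Otherwise $bc$ is a fixed positive integer with only finitely many factorizations into non-negative integers, so only finitely many matrices $B$ arise, giving finitely many conjugacy classes.

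The main obstacle is just the technical mismatch between Lemma \ref{conjugpos} (stated for $\SL_2(\Z)$) and the present statement (over $\GL_2(\Z)$). A clean self-contained alternative to adapting the proof of Lemma \ref{conjugpos} is to use the squaring map: $A^2$ lies in $\SL_2(\Z)$, has trace $n^2 + 2$, and is loxodromic whenever $A$ is, while a given $M \in \SL_2(\Z)$ has at most finitely many square roots in $\GL_2(\Z)$ (since a loxodromic $M$ is diagonalisable over $\C$ with distinct eigenvalues, yielding at most four matrix square roots). Thus the squaring map on conjugacy classes is finite-to-one, and finiteness in the case $\det(A) = -1$ follows from the already-established case $\det(A) = +1$ applied to trace $n^2 + 2$.
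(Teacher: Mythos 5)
The paper gives no proof of this lemma (it is declared well-known and provable via Lemma \ref{conjugpos}), so there is nothing to compare against line by line; your argument correctly implements the intended route of passing to a non-negative representative and counting matrices with $a+d=n$ and $bc=ad\mp 1$ fixed, and the degenerate case $bc=0$ is correctly excluded since it forces integer eigenvalues. You also rightly notice the genuine mismatch that Lemma \ref{conjugpos} only covers $\SL_2(\Z)$ while the statement concerns $\GL_2(\Z)$; of your two fixes, the squaring argument is the clean one and is correct: $A^2$ has determinant $1$, trace $n^2+2$, remains loxodromic, and since a loxodromic $M\in\SL_2(\Z)$ has distinct eigenvalues, any square root commutes with $M$ and is hence one of at most four matrices, so the induced map on conjugacy classes is finite-to-one and the $\det=-1$ case reduces to the $\SL_2(\Z)$ case. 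The proof is correct as written (the appeal to ``the same row-and-column reductions'' for $\det=-1$ is unverified, but it is superseded by the self-contained squaring alternative).
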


\begin{lemma}\label{conjugw}
	Let $\lambda>0$ and $g\in \GL_2(\Z)\ltimes D_2\subset\Cr_2(\C)$. If $\lambda_1(g)\leq\lambda$ then $g$ is conjugate in $\GL_2(\Z)$ to an element of degree $\leq C(\lambda)$, where the constant $C(\lambda)$ only depends on $\lambda$.
\end{lemma}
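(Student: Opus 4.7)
Every element $g \in \GL_2(\Z) \ltimes D_2$ can be written as $g = f_M \cdot d$ with $M \in \GL_2(\Z)$ and $d \in D_2$. The diagonal automorphism $d$ is linear on $\p^2$, so $\deg(g) = \deg(f_M)$; writing $g^n = f_{M^n} \cdot d_n$ for suitable $d_n \in D_2$ further shows that $\lambda_1(g)$ equals the spectral radius $\rho(M)$. Conjugating $g$ by $h = f_H$ with $H \in \GL_2(\Z)$ gives $h g h^{-1} = f_{HMH^{-1}} \cdot d'$ with $d' \in D_2$, so the degree of the conjugate depends only on the $\GL_2(\Z)$-conjugacy class of $M$. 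The lemma therefore reduces to showing: if $\rho(M) \leq \lambda$, then $M$ is $\GL_2(\Z)$-conjugate to some $N$ with $\deg(f_N) \leq C(\lambda)$.

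The relevant case is $\rho(M) > 1$, in which $g$ is loxodromic. Since $\det M = \pm 1$, the eigenvalues of $M$ cannot form a complex conjugate pair (their common modulus would be $1$), hence they are real, equal to $\mu$ and $\pm \mu^{-1}$ with $|\mu| \leq \lambda$. In particular $|\tr(M)| \leq \lambda + \lambda^{-1}$. Applying Lemma \ref{conjuggl} shows that the loxodromic $M \in \GL_2(\Z)$ with $\rho(M) \leq \lambda$ fall into finitely many $\GL_2(\Z)$-conjugacy classes; choosing a representative $N_i$ in each and setting $C(\lambda) := \max_i \deg(f_{N_i})$ gives the bound.

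A more hands-on version of the same argument, which bypasses Lemma \ref{conjuggl}, goes via Lemma \ref{conjugpos}: up to replacing $M$ by $-M$, one may conjugate $M$ in $\GL_2(\Z)$ to a matrix $B$ with non-negative integer entries $a, b, c, e$. The trace bound gives $a, e \leq \lambda + \lambda^{-1}$, and loxodromicity forces $b, c \geq 1$ (otherwise $B$ would be triangular with eigenvalues $\pm 1$), so the determinant identity $ae - bc = \pm 1$ bounds $b$ and $c$ as well. Hence all entries of $B$, and with them $\deg(f_B)$, are bounded in terms of $\lambda$. I expect the only genuine content of the proof to be the reduction from bounded spectral radius to bounded trace, which is immediate in the loxodromic case; for $\rho(M)=1$ with $M$ not of finite order the degree of $f_M$ can grow without bound under iteration over such $M$, so the hypothesis of the lemma should be read in its intended loxodromic context.
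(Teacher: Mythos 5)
Your argument is correct and follows the same route as the paper: reduce to the matrix part $M\in\GL_2(\Z)$, bound the trace in terms of the spectral radius, and invoke Lemma \ref{conjuggl} to get finitely many $\GL_2(\Z)$-conjugacy classes, taking $C(\lambda)$ to be the maximum degree of a set of representatives. Your handling of the $D_2$-factor (the degree of $f_M\cdot d$ equals that of $f_M$ since $d$ is linear, and conjugation by $\GL_2(\Z)$ respects the semidirect decomposition) is if anything cleaner than the paper's appeal to Lemma \ref{puremonomial}, and your alternative via Lemma \ref{conjugpos} is a valid elementary substitute. Your closing caveat is also well taken: the statement as literally written fails for infinite-order $M$ with $\rho(M)=1$ (e.g.\ $f_M=(xy^n,y)$ has $\lambda_1=1$, yet every $\GL_2(\Z)$-conjugate of $M$ has an entry of absolute value at least $n$, so no uniform degree bound exists), and the paper's proof, like yours, silently restricts to the loxodromic case --- the only case in which the lemma is subsequently used.
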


\begin{proof}
	By Lemma \ref{puremonomial}, we may assume that $gh\in\GL_2(Z)$. The dynamical degree $\lambda_1(g)$ is the spectral radius of $g$, i.e.\,the absolute value of the eigenvalue of the matrix $g$ that is strictly larger than 1. The condition $\lambda_1(g)\leq\lambda$ implies that $|\tr(g)|=|\lambda_1(g)+\lambda_1(g)^{-1}|\leq \lambda+1$. So $\tr(g)$ is contained in the finite set of integers between $-(\lambda+1)$ and $(\lambda+1)$. By Lemma \ref{conjuggl}, there exist only finitely many conjugacy classes in $\GL_2(\Z)$ to which $g$ can belong. Denote by $f_1,\dots, f_n$ representants of these classes. We set $C(\lambda)=\max\{\deg(f_1),\dots,\deg(f_n)\}$.  
\end{proof}

Finally, we are able to prove the main result of this section:

\begin{lemma}\label{degreebound}
	Let $g\in\Cr_2(k)$ be a loxodromic element of monomial type. Then there exists an $m\in \GL_2(\Z)\ltimes D_2$ and a  constant $K$ depending only on $d:=\deg(g)$, such that $g$ is conjugate to $m$ by an element of degree $\leq K$.
\end{lemma}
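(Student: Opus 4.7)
The plan is to produce a monomial representative $m$ of $g$ whose degree is already bounded purely in terms of $d$, and then to invoke the Blanc--Cantat conjugacy bound (Theorem~\ref{cantablancdeg}) to control the degree of a conjugating element.

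First, I would observe that the dynamical degree is a conjugacy invariant bounded above by the degree. Indeed, submultiplicativity of the degree gives
\[
\lambda(g) = \lim_{n\to\infty}\deg(g^n)^{1/n}\leq \deg(g)=d.
\]
Since $g$ is of monomial type, by definition there is some $h_0\in\Cr_2(\C)$ with $h_0 g h_0^{-1}\in\GL_2(\Z)\ltimes D_2$. Now Lemma~\ref{conjugw}, applied to $h_0 g h_0^{-1}$ with the parameter $\lambda=d\geq\lambda(g)$, produces an element $m\in\GL_2(\Z)\subset\GL_2(\Z)\ltimes D_2$ that is conjugate (inside $\Cr_2(\C)$) to $g$ and satisfies $\deg(m)\leq C(d)$, where $C(d)$ is the constant from that lemma. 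Note that at this stage we have no control whatsoever on the degree of the intermediate conjugating element --- and we do not need any, because the bounded-degree target $m$ is all that matters for the next step.

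Now both $g$ and $m$ are loxodromic elements of $\Cr_2(\C)$ that are conjugate and whose degrees are bounded by $d':=\max(d,C(d))$, a quantity depending only on $d$. By Theorem~\ref{cantablancdeg}, there exists $h\in\Cr_2(\C)$ with $h g h^{-1}=m$ and
\[
\deg(h)\leq (2d')^{57}.
\]
Setting $K:=(2\max(d,C(d)))^{57}$ then yields a constant depending only on $d$, and the lemma follows.

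There is no real obstacle here: the work has been done in Lemma~\ref{conjugw} (which reduces to finiteness of $\GL_2(\Z)$-conjugacy classes of loxodromic matrices with bounded trace, Lemma~\ref{conjuggl}) and, crucially, in the deep Blanc--Cantat conjugacy-degree bound (Theorem~\ref{cantablancdeg}). The only conceptual point is to apply the latter to the pair $(g,m)$ rather than attempting to control the conjugator directly from the monomial structure, which would be hopeless without a prior degree bound on the target.
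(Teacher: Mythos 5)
Your proposal is correct and follows essentially the same route as the paper: bound $\lambda(g)\leq d$, use Lemma~\ref{conjugw} to obtain a monomial representative $m$ of degree $\leq C(d)$, and then apply Theorem~\ref{cantablancdeg} to the pair $(g,m)$ with $r=\max\{d,C(d)\}$ to get the conjugator bound $K=(2r)^{57}$. The only difference is that you spell out the intermediate conjugation of $g$ into $\GL_2(\Z)\ltimes D_2$ needed to invoke Lemma~\ref{conjugw}, which the paper leaves implicit.
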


\begin{proof}
	We observe that $\lambda_1(g)\leq d$. By Lemma \ref{conjugw}, there exists a constant $C(d)$ such that $g$ is conjugate to an $m\in W_2\ltimes D_2$ of degree $\leq C(d)$. By Theorem~\ref{cantablancdeg}, $g$ can be conjugated to $m$ by an element of degree $\leq K$, where $K=(2r)^{57}$ for $r=\max\{d, C(d)\}$.
\end{proof}

\subsection{Base-points and toric boundaries}\label{basepoints}

Let $S$ be a smooth projective surface with a given regular $D_2$-action that has an open orbit $U\subset S$. The fixed points of this action are called {\it toric points}, the algebraic set $\partial S\coloneqq S\setminus U$ is called the {\it toric boundary}. In what follows, we consider $\p^2$ equipped with the standard action of $D_2$, or blow-ups of toric points $\pi\colon S\to\p^2$  with the pull-back of the standard action of $D_2$ on $\p^2$. In this case, $\partial S$ is always a curve whose irreducible components are curves isomorphic to $\p^1$ with self-intersection $\leq 1$.

A {\it toric point} in the bubble space $\mathcal{B}(\p^2)$ is a point of the form $(p, S, \pi)$, where $\pi\colon S\to\p^2$ is the blow-up of toric points and $p\in S$ is a toric point. If a toric point $q_1\in\mathcal{B}(\p^2)$ lies above a point $q_2\in\mathcal{B}(\p^2)$, then $q_2$ is toric as well. 

Let $C$ be a curve on a surface $S$ and $f\in\Bir(S)$. In what follows, we denote by $f(C)$ the strict transform of $C$ under $f$, i.e.\,the closure of $f(C\setminus \{\Ind(f)\})$, and by $f^{-1}(C)$ the strict transform of $C$ under $f^{-1}$. Note that with this notation, $f^{-1}(C)$ does not contain all the points that are mapped by $f$ to $C$.

Let $S$ be a projective surface, $f\in\Bir(S)$, and assume that $f$ contracts a curve $C\subset S$. If $f(C)=p\in S$ we say that $f$ {\it contracts $C$ to $p$}. We extend this notion to infinitely near points. Consider a point in the bubble space $\mathcal{B}(S)$ with a representative $(p, T, \pi)$. Let $\tilde{f}\in\Bir(T)$ be given by $\tilde{f}\coloneqq\pi^{-1} f\pi$ and denote by $\tilde{C}$ the strict transform of $C$ under $\pi$. We say that $f$ {\it contracts $C$} to $p$ if $\tilde{f}(\tilde{C})=p$. If $p$ lies above a point $q$ in $\mathcal{B}(S)$ and $f$ contracts a curve $C\subset S$ to $p$, then $f$ also contracts $C$ to $q$. Note as well, that if a birational transformation $f\in\Cr_2(\C)$ contracts a curve $C$ to a non-toric point in $\mathcal{B}(\p^2)$, then there exists a blow-up of toric points $\pi\colon S\to \p^2$ such that $\pi^{-1}f\pi(C)$ is a proper non-toric point of $S$. 

By abuse of notation, in this section  we will sometimes denote the lift of a birational transformation $f\in \Cr_2(\C)$ under a blow-up of toric points $\pi\colon S\to\p^2$ again by $f$. This will simplify the notation, as the choice of $\pi$ will always be clear from the context. Similarly, we will identify a curve $C$ on $\p^2$ with its strict transform on $S$, if there is no ambiguity.

\begin{definition}
	Let $S$ be a projective surface and $f\in\Bir(S)$. We denote by $E(f)$ the number of irreducible components of the exceptional divisor of $f$.
\end{definition}

\begin{remark}\label{remarkei}
	For $f\in\Cr_2(\C)$, the numbers $E(f)$ can be bounded by a constant depending only on the degree of $f$. If $f$ and $g$ are two Cremona transformation, then $E(fg)\leq E(f)+E(g)$.
\end{remark}

\begin{lemma}\label{boundedei}
	Let $S$ be a rational projective surface, $f\in\Bir(S)$ of monomial type and $\pi\colon S\dashrightarrow\p^2$ a birational transformation. Then $E(f^n)$ is uniformly bounded for all $n$ by a constant $K$ only depending on $\pi$ and the degree of $\pi f \pi^{-1}$.
\end{lemma}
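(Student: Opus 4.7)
The plan is to reduce the iterates of $f$ to those of a pure monomial map in $\GL_2(\Z) \ltimes D_2$---whose number of exceptional components stays uniformly small---and then transport the bound back through the conjugations via subadditivity of $E(\cdot)$.

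Setting $g := \pi f \pi^{-1}$ and $d := \deg(g)$, the map $g$ is of monomial type in $\Cr_2(\C)$. I would first treat the loxodromic case, which is the one relevant to this section; in the elliptic case $\deg(f^n)$ is already uniformly bounded by Theorem \ref{dim2}, and hence so is $E(f^n)$. By Lemma \ref{degreebound}, there exist $m \in \GL_2(\Z) \ltimes D_2$ and $\psi \in \Cr_2(\C)$ of degree bounded by a constant $K(d)$ depending only on $d$, such that $g = \psi^{-1} m \psi$, whence $g^n = \psi^{-1} m^n \psi$ for every $n$.

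The key observation is that every power $m^n$ is again a monomial transformation of $\p^2$: its base-points lie among the three toric points and its contracted curves lie inside the toric boundary (the three coordinate lines), so $E(m^n) \leq 3$ uniformly in $n$. The extension of Remark \ref{remarkei} to compositions of birational maps between possibly different surfaces---obtained in one line by tracking what each factor contracts---then gives
\[
E(g^n) \leq E(\psi^{-1}) + E(m^n) + E(\psi) \leq 2\,c(K(d)) + 3,
\]
where $c(\cdot)$ is a bound for $E$ in terms of degree. A further application of subadditivity to $f^n = \pi^{-1} g^n \pi$ yields $E(f^n) \leq E(\pi^{-1}) + E(g^n) + E(\pi)$, which depends only on $\pi$ and $d$, as required.

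The main obstacle is securing a conjugating map $\psi$ whose degree is controlled purely in terms of $d$: without such a bound the auxiliary $E(\psi)$ terms could blow up and ruin the estimate. This is precisely the content of Lemma \ref{degreebound}, itself resting on the gap property and the Blanc--Cantat bound (Theorem \ref{cantablancdeg}) on degrees of conjugators between loxodromic elements of bounded degree. Once that input is in hand, the remaining verifications---the toric nature of $m^n$ and subadditivity of $E$ across different surfaces---are straightforward.
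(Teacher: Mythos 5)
Your argument is correct and follows essentially the same route as the paper's proof: reduce to $\p^2$ via $\pi$ (which contracts only finitely many curves), conjugate to a genuine monomial map by an element of degree bounded in terms of $d$ using Lemma \ref{degreebound}, observe $E(m^n)\leq 3$, and conclude by subadditivity of $E$. The only cosmetic difference is that you flag the non-loxodromic case explicitly, which the paper leaves implicit.
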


\begin{proof}
	The birational transformation $\pi\colon S\dashrightarrow \p^2$ only contracts finitely many irreducible curves. So $E(f^n)$ is uniformly bounded for all $n$ if and only if $E(\pi f^n\pi^{-1})$ is uniformly bounded. It is therefore enough to consider the case $f\in\Bir(\p^2)$.
	
	By Lemma \ref{degreebound}, there exists a $g\in\Cr_2(\C)$ of degree $\leq C$, where $C$ only depends on $\deg(f)$, such that $gfg^{-1}=m\in\GL_2(\Z)\ltimes D_2$. We have $E(m^n)\leq 3$  for all $n$.  By Remark \ref{remarkei}, $E(g)$ is bounded by a constant $K'$ depending only on $\deg(g)$ and hence only on $\deg(f)$. Therefore, $E(f^n)=E(gm^ng^{-1})\leq 2K'+3$ and we thus set $K\coloneqq 2K'+3$.
\end{proof}

\begin{lemma}\label{loxoconj}
	Let $f\in\Cr_2(\C)$ be a loxodromic element that is not contained in $\GL_2(\Z)\ltimes D_2$. There exists an $n\in\Z_+$ and a dense open set $V\subset D_2$ such that $f^nd^{-1}f^{-n}d$ is loxodromic for each $d\in V$.
\end{lemma}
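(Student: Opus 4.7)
The plan is to reduce the conclusion to a single instance of a loxodromic commutator via lower semi-continuity of the dynamical degree, and then to rule out the negation by invoking Theorem \ref{ellitpicnorm}. For each $n \in \Z_+$, the map $\psi_n \colon D_2 \to \Cr_2(\C)$ defined by $d \mapsto f^n d^{-1} f^{-n} d$ is a morphism in the sense of Section \ref{zariskitop}, so Theorem \ref{lowersemi1} implies that the set $V_n = \{d \in D_2 : \psi_n(d) \text{ is loxodromic}\}$ is Zariski open in the irreducible variety $D_2 \cong (\C^*)^2$. Hence it is enough to exhibit a single pair $(n, d_0) \in \Z_+ \times D_2$ with $\psi_n(d_0)$ loxodromic; density of the corresponding $V_n$ is then automatic.

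Before the main step I would record that $f^n \notin \GL_2(\Z) \ltimes D_2$ for any $n \ge 1$. If one had $f^n = dm$ with $d \in D_2$ and $m \in \GL_2(\Z)$, then Lemma \ref{puremonomial} would provide $d' \in D_2$ with $d'^{-1} f^n d' = m$; the element $h := d'^{-1} f d'$ would commute with $m$ (since $f$ commutes with $f^n$), so Lemma \ref{loxogllemma} applied to $h m h^{-1} = m$ would force $h \in \GL_2(\Z) \ltimes D_2$. Hence $f = d' h d'^{-1}$ would lie in $\GL_2(\Z) \ltimes D_2$, contradicting the hypothesis. In particular, the tori $T_n := f^n D_2 f^{-n}$ all differ from $D_2$.

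Suppose now, toward a contradiction, that $V_n = \emptyset$ for every $n \ge 1$. Set $G := \langle f, D_2 \rangle$ and let $A := \langle T_n : n \in \Z \rangle$, a normal subgroup of $G$ that contains $D_2 = T_0$ and is therefore infinite. The heart of the proof consists in showing that under the commutator hypothesis, $A$ is a group of elliptic elements. Once this is established, $A$ is an infinite normal elliptic subgroup of $G$, and since $G$ contains the loxodromic $f$, Theorem \ref{ellitpicnorm} produces $\alpha \in \Cr_2(\C)$ with $\alpha G \alpha^{-1} \subset \GL_2(\Z) \ltimes D_2$. From $\alpha D_2 \alpha^{-1} \subset \GL_2(\Z) \ltimes D_2$ and Lemma \ref{uniquembeddtorus} we obtain $\alpha \in \GL_2(\Z) \ltimes D_2$, which gives $f \in \GL_2(\Z) \ltimes D_2$ --- the desired contradiction.

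The main obstacle is the claim that $A$ consists only of elliptic elements. The commutator hypothesis controls only the 2-parameter slice $d \mapsto [f^n, d^{-1}]$ inside the 4-dimensional set $T_n \cdot D_2$, whereas elements of $A$ are arbitrary words in $\bigcup_n T_n$. I would attack this by applying Theorem \ref{lowersemi1} iteratively to the bounded-degree morphisms $D_2^k \to \Cr_2(\C)$ parametrizing length-$k$ products in $\bigcup_n T_n$, combining with the classification of elliptic subgroups (Theorem \ref{ellipticelements}) and of maximal algebraic subgroups of $\Cr_2(\C)$ (Theorem \ref{maxsubgroups}), in order to argue that any loxodromic word would force the existence of a loxodromic commutator of the form $[f^n, d^{-1}]$ on a Zariski-dense subfamily, contradicting the hypothesis.
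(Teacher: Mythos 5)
Your reduction via Theorem \ref{lowersemi1} to producing a single pair $(n,d_0)$ with $f^nd_0^{-1}f^{-n}d_0$ loxodromic is correct and matches the paper's final step, and your preliminary observation that $f^n\notin\GL_2(\Z)\ltimes D_2$ is fine. But the heart of your argument is missing. Everything rests on the claim that, if no commutator $f^nd^{-1}f^{-n}d$ is loxodromic, then $A=\langle f^nD_2f^{-n}:n\in\Z\rangle$ is a group of elliptic elements, and you do not prove this; you only announce a strategy ("applying Theorem \ref{lowersemi1} iteratively \dots in order to argue that any loxodromic word would force a loxodromic commutator"). As you yourself note, the hypothesis constrains only the $2$-parameter family $(f^nd^{-1}f^{-n})\cdot d$ inside the $4$-parameter set $T_n\cdot D_2$, and it says nothing at all about longer words in $\bigcup_nT_n$. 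Products of elliptic elements are generically loxodromic (this is exactly the phenomenon behind Theorem \ref{lowersemi}), so there is no reason for $A$ to be elliptic, and no obvious way to propagate the commutator hypothesis to arbitrary words. There is also a secondary issue: "not loxodromic" allows parabolic elements, which your plan would additionally have to exclude before Theorem \ref{ellitpicnorm} applies. As it stands the proposal is a genuine gap, not a proof.

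The paper argues quite differently and avoids this difficulty entirely. It works with the two fixed points $\alpha^\pm\in\partial\h$ of $f$ and first shows that a dense open set of $d\in D_2$ fixes neither of them: the stabilizer of $\alpha^+$ maps to $\R_+^*$ by its action on the corresponding isotropic line, the kernel of this map is a group of elliptic elements normalized by $f$, and if that kernel is infinite then Theorem \ref{ellitpicnorm} together with Lemma \ref{uniquembeddtorus} shows the stabilizer cannot contain a dense subset of $D_2$ (else $f\in\GL_2(\Z)\ltimes D_2$). For such a $d$ the four boundary points $\alpha^\pm$ and the fixed points $\beta^\pm$ of $d^{-1}f^{-1}d$ are pairwise distinct, and a ping-pong argument with small neighbourhoods produces an explicit $n$ for which $f^nd^{-1}f^{-n}d$ has an attracting and a repelling fixed point on the boundary, hence is loxodromic. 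If you want to salvage your approach you would need a substitute for this ping-pong step; the normal-subgroup route does not supply one.
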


\begin{proof}
	Let $\alpha^+\in\partial \h$ be the attracting fixed point of the isometry of $\h$ induced by $f$, and let $\alpha^-\in \partial\h$ be its repulsive fixed point. The axis $\Ax(f)$ is the geodesic line between $\alpha^+$ and $\alpha^-$. We claim that there exists a dense open subset $U\subset D_2$ of elements that fix neither $\alpha^+$ nor $\alpha^-$. Denote by $G\subset \Cr_2(\C)$ the subgroup of all elements that fix $\alpha^+$. Let $L\subset\ZZZ(\p^2)$ be the one-dimensional subspace  that corresponds to $\alpha^+$. Since $G$ fixes $\alpha^+$, its linear action on $\ZZZ(\p^2)$ restricts to an action on $L$ by automorphisms preserving the orientation. This yields a group homomorphism $\rho\colon G\to\R_+^*$.  Loxodromic elements don't fix any vector in $\ZZZ(\p^2)$. Let us note as well that the group $G$ does not contain any parabolic element since $\alpha^+$ is fixed by a loxodromic element and does therefore not correspond to the class of a fibration. It follows that the kernel of $\rho$ is a subgroup of elliptic elements, which is normalised by $f$. If $\ker(\rho)$ is infinite, there exists, by Theorem \ref{ellitpicnorm}, an element $h\in\Cr_2(\C)$, such that $hGh^{-1}\subset\GL_2(\Z)\ltimes D_2$. As $f$ is not in $\GL_2(\Z)\ltimes D_2$, the transformation $h$ is not in $\GL_2(\Z)\ltimes D_2$ and therefore, by Lemma~\ref{uniquembeddtorus}, $h^{-1}D_2h\cap D_2$ is a proper closed subset of $D_2$. In particular, there exists a dense open set $U_1\subset D_2$ that is not contained in $G$. If $\ker(\rho)$ is finite, the existence of such a dense open $U_1\subset D_2$ follows trivially. With the same argument, we obtain a dense open set $U_2\subset D_2$ that does not fix $\alpha^-$. Define $U\coloneqq U_1\cap U_2$. This proves the claim.
	
	Let $U^2=\{d^2\mid\in U\} $ and let $d\in U\cap U^2$ be arbitrary. Then $d$ does neither fix $\alpha^+$ nor $\alpha^-$ and $d(\alpha^+)\neq\alpha^-$. Denote by $\beta^+\in\partial \h$ the attracting fixed point of the loxodromic isometry $d^{-1}f^{-1}d$  and by $\beta^-\in\partial\h$ its repulsive fixed point. By the above observation, $\alpha^+, \alpha^-, \beta^+$ and $\beta^-$ are pairwise disjoint. Let $S_1^+$ be a small neighborhood of $\alpha^+$ in $\partial\h$ and $S_1^-$ a small neighborhood of $\alpha^-$. Similarly, let $S_2^+$ be a small neighborhood of $\beta^+$ and $S_2^-$ a small neighborhood of $\beta^-$. We may assume that $S_1^+, S_1^-, S_2^+$ and $S_2^-$ are pairwise disjoint. Since $\beta^+$ is attractive, there exists an $n_1\in\Z_+$ such that $d^{-1}f^{-n_1}d(S_1^+)\subset S_2^+$. Similarly, let $n_2\in\Z_+$ be such that $f^{n_2}(S_2^+)\subset S_1^+$ is a proper subset. For $n\coloneqq\max\{n_1, n_2\}$, we obtain that $f^nd^{-1}f^{-n}d(S_1^+)$ is a proper subset of $S_1^+$. Analogously, if we choose $n$ large enough, $(f^nd^{-1}f^{-n}d)^{-1}(S_2^-)$ is a proper subset of $S_2^+$. Thus, $f^nd^{-1}f^{-n}d$ has an attractive fixed point in $S_1^+$ and a repulsive fixed point in $S_2^-$. In particular, $f^nd^{-1}f^{-n}d$ is loxodromic.
	
	Consider the family of birational transformations $\{f^nd^{-1}f^{-n}d\mid d\in D_2\}$. It contains one element of dynamical degree $\lambda>1$. By Theorem \ref{lowersemi1}, the dynamical degree is a lower semi-continuous function. Hence, there exists a dense open subset $V\subset D_2$ such that the dynamical degree of $f^nd^{-1}f^{-n}d$ is $>1$ for all $d\in V$, which is equivalent to $f^nd^{-1}f^{-n}d$ being loxodromic.
\end{proof}

\begin{lemma}\label{loxomonpres}
	Let $m$ be a loxodromic monomial transformation and $(x,y)$ affine coordinates. Let $L_x$ be the line given by $x=0$ and $L_y$ be the line given by $y=0$. Then $m(L_x)\neq L_x$ and $m(L_y)\neq L_y$.
\end{lemma}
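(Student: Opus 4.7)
The plan is a direct computation that reduces the question to a property of the underlying integer matrix. First I would write $m = d \circ f_A$ with $d \in D_2$ and $A = \bigl(\begin{smallmatrix} a & b \\ c & e \end{smallmatrix}\bigr) \in \GL_2(\Z)$, so that $f_A(x,y) = (x^a y^b, x^c y^e)$ (I rename the $(2,2)$-entry to avoid clashing with the torus element $d$). Any element of $D_2$ acts as a regular automorphism of $\p^2$ preserving each toric boundary line setwise, so $m(L_x) = L_x$ if and only if $f_A(L_x) = L_x$, and likewise for $L_y$; it therefore suffices to analyze $f_A$.

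Next I would compute the image under $f_A$ of a generic point $(0,y)$ of $L_x$, namely $f_A(0,y) = (0^a y^b, 0^c y^e)$, interpreted inside $\p^2$. A short case analysis on the signs of $a$ and $c$ (using $0^k = 0$ for $k > 0$, $0^0 = 1$, and, in projective coordinates, $0^k = \infty$ for $k < 0$) shows that $f_A(L_x) = L_x$, rather than $L_x$ being contracted to a toric point or sent onto $L_y$ or $L_z$, precisely when $a > 0$ and $c = 0$, i.e.\ when $A$ is upper triangular. The symmetric computation at $(x,0)$ for $L_y$ gives $f_A(L_y) = L_y$ if and only if $b = 0$ and $e > 0$, i.e.\ when $A$ is lower triangular.

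The conclusion is then arithmetic: a triangular matrix in $\GL_2(\Z)$ has determinant equal to the product of its diagonal entries, and this product must be $\pm 1$, so both diagonal entries lie in $\{\pm 1\}$. These diagonal entries are the eigenvalues of $A$, so the spectral radius of any triangular matrix in $\GL_2(\Z)$ equals $1$. Since the dynamical degree of $m$ agrees with $\lambda(f_A)$, which is the spectral radius of $A$ (as recalled just before the lemma), loxodromicity of $m$ prevents $A$ from being either upper or lower triangular, and hence forbids both $m(L_x) = L_x$ and $m(L_y) = L_y$.

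The only obstacle, and it is a minor one, is executing the case analysis in the middle paragraph cleanly. One has to distinguish the three possible behaviours of a toric boundary line under $f_A$, namely being sent setwise to another boundary line, being contracted to a toric point, or being sent onto the line at infinity $L_z$; this is most transparently done in projective coordinates, where negative exponents cause no ambiguity. Nothing deep is involved, but the enumeration of cases should be written out explicitly.
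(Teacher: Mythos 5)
Your proposal is correct and follows essentially the same route as the paper: reduce to the pure monomial part $f_A$ because diagonal automorphisms preserve each coordinate line, observe that $f_A(L_x)=L_x$ forces $A$ to be triangular (the paper phrases this as $m=(xy^k,y^{\pm1})$, which is your condition combined with $\det A=\pm1$), and conclude that a triangular matrix in $\GL_2(\Z)$ has spectral radius $1$, contradicting loxodromicity.
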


\begin{proof}
	By Lemma \ref{puremonomial}, we may assume that $m\in\GL_2(\Z)$, since $d(L_x)=L_x$ and $d(L_y)=L_y$ for all $d\in D_2$. 
	
	It is now enough to observe that $m(L_x)=L_x$ implies that $m$ is of the form $(xy^k,y^{\pm 1})$ and $m(L_y)=L_y$ implies that $m$ is of the form $(x^{\pm 1},x^ky)$ for some $k\in\Z$. No transformation of the form $(xy^k,y^{\pm 1})$ or of the form $(x^{\pm 1},x^ky)$ is loxodromic.
\end{proof}

\begin{lemma}\label{monomiallemmanofix}
	Let $m$ be a loxodromic monomial transformation and $\pi\colon S\to \p^2$ a blow-up of toric points. Let $L\subset\partial S$ be an irreducible boundary component, then $\pi^{-1}m\pi(L)\neq L$.
\end{lemma}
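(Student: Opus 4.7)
My plan is to exploit the fact that $S$ is itself a smooth toric surface. Since $\pi\colon S\to\p^2$ only blows up toric fixed points, the standard torus $\mathbb{T}\simeq(\C^*)^2\subset\p^2$ lifts to a dense open torus in $S$, and $\partial S=S\setminus\mathbb{T}$. Writing $N\simeq\Z^2$ for the cocharacter lattice of $\mathbb{T}$, the irreducible components of $\partial S$ correspond bijectively to the rays of the fan $\Sigma_S\subset N\otimes\R$ of $S$; for each such component $L$, I denote by $v_L\in N$ the primitive integer generator of the associated ray.

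Next, I would factor $m=\tau\cdot A$ with $\tau\in D_2$ and $A\in\GL_2(\Z)$. Since $\tau$ stabilizes every $\mathbb{T}$-orbit setwise, it also fixes every boundary component of $\partial S$, and the statement reduces to showing $\pi^{-1}A\pi(L)\neq L$. After passing to a common toric refinement of $\Sigma_S$ and $A^{-1}(\Sigma_S)$, the birational map $A$ becomes biregular, and its induced action on boundary components is precisely the linear action of $A$ on rays of the fan. Hence $A(L)=L$ would force $A\cdot v_L=\mu v_L$ for some $\mu>0$, i.e.\,$v_L$ would be a positive integer eigenvector of $A$.

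I would conclude by a short eigenvalue argument. Since $A\in\GL_2(\Z)$ has $\det A=\pm 1$, its characteristic polynomial is $x^2-\tr(A)x\pm 1\in\Z[x]$. Any rational root of this polynomial would be an integer equal to $\pm 1$ by the rational root theorem, which contradicts the loxodromy assumption $\lambda(A)>1$. Therefore the two eigenvalues of $A$ are real quadratic irrationals, the two eigenlines have irrational slope in $N\otimes\R$, and no primitive lattice vector can lie on either one. This contradicts the existence of $v_L$ and finishes the proof.

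The one technical step in this plan is the identification of the image of a boundary component under a birational monomial transformation with the action of $A$ on rays of the fan. I would handle it by passing to a simultaneous toric resolution on which $A$ is regular and transferring the conclusion back to $S$ via the standard description of strict transforms of boundary components in terms of fans; alternatively, one can work directly on the dense torus orbit inside $L$, which is isomorphic to $\mathbb{T}/(\C^*)_{v_L}$ and is carried by $A|_\mathbb{T}$ onto the analogous orbit in direction $A\cdot v_L$. Beyond this bookkeeping, no genuine analytic calculation is required --- the real content is the two-line eigenvalue argument above.
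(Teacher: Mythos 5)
Your proof is correct, and it takes a genuinely different route from the paper's. The paper argues by contradiction as follows: using its Lemma \ref{loxomonpres} it first rules out the case where $L$ is the strict transform of a coordinate line, then factors $\pi$ into a chain of single toric blow-ups ending with the exceptional curve $L$, computes in local coordinates that $\pi$ is itself monomial with some matrix $P\in\GL_2(\Z)$, so that $\pi^{-1}m\pi$ is locally the monomial map of the conjugate matrix $PAP^{-1}$, and then applies Lemma \ref{loxomonpres} again to this (still loxodromic) conjugate. You instead invoke the global toric dictionary: boundary components of $S$ correspond to rays of the fan, the diagonal part $\tau$ of $m=\tau A$ acts trivially on this set, and the strict transform under $A$ is governed by the linear action of $A$ on rays, so a fixed component would yield a primitive lattice eigenvector of $A$ --- impossible because a loxodromic element of $\GL_2(\Z)$ has irrational (real quadratic) eigenvalues. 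Both arguments bottom out in the same linear-algebra fact (the paper's Lemma \ref{loxomonpres} is exactly the statement that the standard basis vectors are not eigenvectors of a loxodromic matrix, and its coordinate computation plays the role of your change of lattice basis), but your version replaces the explicit blow-up bookkeeping by the fan formalism, which makes the proof shorter and arguably more transparent at the cost of importing standard toric machinery; the paper's proof is longer but entirely self-contained. The two small points you should make sure to nail down are the ones you already flag: that the equivariant identification of strict transforms with images of rays survives passage to a common refinement (it does, since in dimension two every ray of $\Sigma_S$ persists in the refinement), and that $m$ loxodromic is equivalent to $A$ loxodromic (immediate since $(\tau A)^n=\tau_nA^n$ with $\tau_n\in D_2$, so the degree sequences agree).
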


\begin{proof}
	Assume that there exists a blow-up of toric points $\pi\colon S\to\p^2$ and a line $L\subset \partial S$ such that $\pi^{-1}m\pi(L)=L$. By Lemma \ref{loxomonpres}, the line $L$ is not the strict transform of a line in $\p^2$.  After possibly contracting components of $\partial S$ different from $L$, we can write $\pi=\pi_k\circ\pi_{k-1}\circ\cdots\circ\pi_1$, where each $\pi_i\colon S_{i-1}\to S_{i}$ is the blow-up of a single toric point $p_i$ such that $\pi_1(L)=p_1$ and $\pi_l\circ\cdots\pi_1(L)=\pi_l(p_{l-1})=p_l$ for all $1\leq l\leq k$. We have $S_0=S$ and $S_k=\p^2$. Since $\pi^{-1}m\pi(L)=L$, we have that $p_l$ is a fixed point of the pull-back of $m$ on $S_l$ for all $1\leq l\leq k$. 
	
	Let $(x_0,y_0)$ be local affine coordinates of $S$ such that $L$ is defined by $x_0=0$ and let $(x_1, y_1)$ be local affine coordinates of $S_1$ such that $p_1=(0,0)$ and the exceptional divisor of $\pi_2$ is given by $x_2=0$. We proceed inductively and define local affine coordinates $(x_l, y_l)$ of $S_l$ in such a way that $p_l=(0,0)$ and the exceptional divisor of $\pi_{l+1}$ is given by $x_l=0$ for all $1\leq l\leq k$. With respect to the local affine coordinates $(x_l, y_l)$, the blow-up $\pi_{l}\colon S_{l-1}\to S_l$ is then given by $(x_l, y_l)\mapsto (x_l, x_ly_l)$ or  $(x_l, y_l)\mapsto (x_ly_l, x_l)$ and hence $\pi\colon S\to\p^2$ is of the form $(x_k, y_k)\mapsto (x_k^{r}y_k^{s}, x_k^{t}y_k^{u})$, where $	\left(\begin{array}{cc}
	r & s \\ 
	t & u
	\end{array}  \right) \in\GL_2(\Z)$.
	
	Since $m$ is a monomial transformation, it is of the form $m=(x^ay^b, x^cy^d)$. Hence we obtain that locally $\pi^{-1} m\pi=(x_1^{a'}y_1^{b'},x_1^{c'}y_1^{d'})$, where
	\[
	\left(\begin{array}{cc}
	a' & b' \\ 
	c' & d'
	\end{array}  \right) =\left(\begin{array}{cc}
	r & s \\ 
	t & u
	\end{array}  \right)\left(\begin{array}{cc}
	a & b \\ 
	c & d
	\end{array}  \right)\left(\begin{array}{cc}
	r & s \\ 
	t & u
	\end{array}  \right)^{-1}.
	\]
	Since $\left(\begin{array}{cc}
	a & b \\ 
	c & d
	\end{array}  \right)$ is a loxodromic matrix, the matrix $\left(\begin{array}{cc}
	a' & b' \\ 
	c' & d'
	\end{array}  \right)$ is loxodromic as well. Lemma \ref{loxomonpres} now yields a contradiction to the assumption that $\pi^{-1} m\pi(L)=L$.
\end{proof}

\begin{remark}
	 Let $S\to\p^2$ be a blow-up of toric points. Lemma \ref{monomiallemmanofix} implies in particular that a loxodromic monomial transformation $m$  does not preserve any irreducible curve on $S$, i.e.\,there exists no irreducible curve $C$ such that $m(C)=C$. Indeed, for curves contained in $\partial S$ the claim is proven in Lemma \ref{monomiallemmanofix}. Assume now that there is an irreducible curve $C\subset S$ that is not contained in $\partial S$ that satisfies $m(C)=C$. Let $S'\to S$ be a blow-up of toric points such that $C$ intersects $\partial S'$ in a non-toric point $p$ and let  $L$ be the irreducible boundary component of $\partial S'$  that contains $p$. Since $m$ preserves the complement of $\partial S'$ and $C$ intersects $\partial S'$ in only finitely many points, there exists a positive integer $n$ such that $m^n(p)=p$. But this implies $m(L')=L'$, which contradicts Lemma \ref{monomiallemmanofix}.
\end{remark}

\begin{lemma}\label{goodsetopen}
	Let $K$ be a positive integer and let $f\in\Cr_2(\C)$ be a birational transformation that contracts a curve $C\subset \p^2$ that is not contained in $\partial\p^2$ to a non-toric point $p$ in the bubble space $\mathcal{B}(\p^2)$. Let $U_1, U_2\subset D_2$ be the subsets such that for all $d\in U_1$ and  all $1\leq l\leq K$ we have:
	\begin{itemize}
		\item $(df)^{-l}(C)$ is a curve not contained in $\partial \p^2$;
	\end{itemize}
	and for all $d\in U_2$ and  all $1\leq l\leq K$:
	\begin{itemize}
		\item  $(df)^{l}(C)$ is a non-toric point.
	\end{itemize}
	Then the sets $U_1$ and $U_2$ are open. It follows that for all $d\in U_1\cap U_2$, the transformation $(df)^K$ contracts at least $K$ different irreducible curves.
\end{lemma}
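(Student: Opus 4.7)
The plan is to analyze $U_2$ and $U_1$ separately, expressing each as a finite intersection of open conditions on $D_2$, and then to read off the contraction claim from the definitions. For $U_2$, pass to a blow-up $\pi\colon S\to\p^2$ of toric points such that the lift $\tilde f:=\pi^{-1}f\pi$ satisfies $\tilde f(C)=\tilde p$ for a proper non-toric point $\tilde p\in S$; since every $d\in D_2$ extends to an automorphism of $S$, one has $\Ind(d\tilde f)=\Ind(\tilde f)$ independent of $d$. Set $q_1(d):=d(\tilde p)$ and, inductively, $q_l(d):=d\tilde f(q_{l-1}(d))$ whenever the right-hand side is defined; then $(df)^l$ contracts $C$ to the bubble point $q_l(d)$ on the locus where $q_l(d)$ makes sense as a point on $S$. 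On the open set $V_{l-1}\subseteq D_2$ where $q_1,\dots,q_{l-1}$ are already defined and non-toric, $d\mapsto q_{l-1}(d)$ is a regular map, and the two additional requirements $q_{l-1}(d)\notin\Ind(\tilde f)$ and $q_l(d)$ non-toric each remove a proper Zariski-closed subset of $V_{l-1}$, because $\Ind(\tilde f)$ and the toric set of $S$ are finite. Intersecting over $l=1,\dots,K$ yields openness of $U_2$. If at some $d_0$ the point $q_{l-1}(d_0)$ lies in $\Ind(\tilde f)$ but $(d_0f)^l(C)$ nonetheless represents a non-toric bubble point, one passes to a further toric blow-up resolving this indeterminacy; for openness only the generic behavior in a neighbourhood of $d_0$ matters.

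For $U_1$ the analysis is parallel. The map $f^{-1}$ contracts only finitely many irreducible curves, and $(df)^{-l}(C)$ fails to be a curve precisely when the $(l-1)$-st backward iterate coincides, after applying $d^{-1}$, with one of these fundamental curves of $f$. Since $C$ is not contained in $\partial\p^2$ it is not $D_2$-invariant, so the locus of $d$'s in $D_2$ producing such a coincidence is a proper closed subset. The further requirement that $(df)^{-l}(C)$ avoid the finitely many irreducible components of $\partial\p^2$ is again a finite union of proper closed sub-conditions (using that the $D_2$-orbit of a non-invariant irreducible curve cannot land inside the finite set $\partial\p^2$ on an open set of $d$). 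Intersecting over $l=1,\dots,K$ gives openness of $U_1$.

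For the final assertion, fix $d\in U_1\cap U_2$ and consider the $K$ curves $C_i:=(df)^{-i}(C)$ for $i=0,1,\dots,K-1$: by $U_1$ together with the hypothesis on $C$ each $C_i$ is an irreducible curve. They are pairwise distinct, for if $C_i=C_j$ with $i<j$ then applying $(df)^j$ would give $(df)^{j-i}(C)=C$, contradicting the $U_2$ condition that $(df)^{j-i}(C)$ is a point, since $1\leq j-i\leq K-1$. As birational maps $(df)^K\circ(df)^{-i}=(df)^{K-i}$, and $(df)^{K-i}(C)$ is a point by $U_2$ (because $1\leq K-i\leq K$), so the image of $C_i$ under $(df)^K$ coincides with that point. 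Hence $(df)^K$ contracts at least $K$ distinct irreducible curves. The most delicate step I anticipate is the openness argument, where one must verify that the condition "being a well-defined bubble point" is genuinely open on $D_2$ rather than merely constructible; this rests on the finiteness of $\Ind(\tilde f)$ and of the toric set on the resolution $S$ and, crucially, on this finiteness being uniform in $d\in D_2$.
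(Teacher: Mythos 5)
Your proof is correct and follows essentially the same route as the paper: both arguments realize $U_1$ and $U_2$ as finite intersections of open conditions obtained by tracking, step by step, the backward strict transforms of $C$ and the forward orbit of the contracted point, using that the indeterminacy loci, the contracted curves, and the toric data are finite and independent of $d\in D_2$. You additionally write out the final deduction (that the curves $(df)^{-i}(C)$, $0\leq i\leq K-1$, are pairwise distinct and each contracted by $(df)^K$), which the paper's proof leaves implicit; that part of your argument is also correct.
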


\begin{proof}
	For all $1\leq l\leq K$ the condition that $f^{-1}d_{l}^{-1}f^{-1}\dots f^{-1}d_1^{-1}(C)$ is not contained in the exceptional locus of $f^{-1}$ nor in $\partial\p^2$ is an open condition on $(D_2)^l$. Hence there is an open  set $V_1\subset (D_2)^K$ such that for all $(d_1,\dots, d_k)\in V_1$ and  all $1\leq l\leq K$ we have that $f^{-1}d_{l}^{-1}f^{-1}\dots f^{-1}d_1^{-1}(C)$ is not contained in the exceptional locus of $f^{-1}$ nor in $\partial\p^2$. We embed $D_2$ into $(D_2)^K$ by identifying it with the diagonal. In that way we can define $U_1\coloneqq D_2\cap V_1$. 
	
	To construct $U_2$ we proceed similarly. First we note that there is an open set $V_2\subset(D_2)^K$ such that for all $1\leq l\leq K$ and all $(d_1,\dots, d_k)\in V_2$ the point ${d_l}_\bullet f_\bullet\dots {d_1}_\bullet (f(C))$ is not a base-point of $f$ and is not a point that is mapped to a toric point by $f_\bullet$. We then define $U_2\coloneqq V_2\cap D_2$.
\end{proof}

Define $K\in\Z$ to be the integer from Lemma \ref{proofofloxomain} such that for all loxodromic transformations of monomial type $g$ of degree $\leq \deg(f)$ one has that $g^n$ contracts at most $K-1$ different curves. Assume that the open sets $U_1$ and $U_2$ from Lemma~\ref{goodsetopen} are non-empty. By choosing a $d\in U_1\cap U_2$ such that $df$ is loxodromic, we obtain that $df$ is loxodromic but not of monomial type. The main idea of the proof of Theorem \ref{loxomain} will be to use this kind of argument together with Lemma \ref{sheplemma} to show that loxodromic elements in a simple group $G\subset\Cr_2(\C)$ only contract curves contained in $\partial \p^2$. From this we will then deduce that all loxodromic elements in $G$ are in fact monomial which will lead to a contradiction. However, the cumbersome part is to construct a loxodromic transformation $f$ in $G$ for which the two open sets $U_1$ and $U_2$ are non-empty. 

\begin{lemma}\label{nontoriccontract}
	Let  $f\in\Cr_2(\C)$ be a birational transformation that contracts a curve $C$ that is not contained in $\partial \p^2$ and assume that $f(C)$ is a point not contained in $\partial S$. Let $K\in\Z_+$ be a constant. Then there exists a dense open set $U\subset D_2$ such that $(df)^n(C)$ is a point not contained in $\partial\p^2$ for all $1\leq n\leq K$ and all $d\in U$.
\end{lemma}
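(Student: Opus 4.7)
My plan is to prove this by induction on $n$, showing that for each $1\leq n\leq K$ the set
\[
U_n := \{d\in D_2 \mid (df)^k(C)\text{ is a well-defined point not contained in }\partial\p^2\text{ for all }1\leq k\leq n\}
\]
is open and dense in $D_2$; then $U=U_K$ will be the desired set. Openness at each step follows from the Zariski-algebraic nature of the defining conditions, analogous to the argument in Lemma~\ref{goodsetopen}: the conditions ``$(df)^k(C)$ is a base-point of $f$'' or ``$(df)^k(C)$ lies in $\partial\p^2$'' cut out closed subsets of $D_2$, and undefinedness of the iterate is likewise a closed condition.

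For density I intend to carry along the strengthened inductive hypothesis that the rational map $\alpha_n\colon U_n\dashrightarrow\p^2$, $d\mapsto(df)^n(C)$, is dominant. The base case $n=1$ is immediate: $\alpha_1(d)=d\cdot p$ where $p=f(C)$ is non-toric by assumption, and since $D_2$ acts simply transitively on the open non-toric orbit, $\alpha_1$ is dominant onto that orbit and $U_1=D_2$.

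For the inductive step, assuming $U_n$ is open dense and $\alpha_n$ is dominant, I first remove the $\alpha_n$-preimage of the finite set $\B(f)$ of base-points of $f$; dominance of $\alpha_n$ makes this a codimension-$\geq 2$ (hence finite) subset of $U_n$, giving an open dense subset $V_n\subset U_n$ on which $d\mapsto f(\alpha_n(d))$ is well defined and dominant. Then $\alpha_{n+1}(d)=d\cdot f(\alpha_n(d))$. Provided $\alpha_{n+1}$ is again dominant, the $\alpha_{n+1}$-preimage of $\partial\p^2$ is a codimension-$1$ closed subset of $V_n$, and its complement is the desired open dense $U_{n+1}$. The final $U=\bigcap_{n=1}^K U_n$ is open and dense as a finite intersection.

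The main obstacle I anticipate is verifying that $\alpha_{n+1}$ remains dominant at each step. In toric affine coordinates, writing $d=(d_1,d_2)$ and $f(\alpha_n(d))=(s_1(d),s_2(d))$, one has $\alpha_{n+1}(d)=(d_1 s_1(d),\,d_2 s_2(d))$, and dominance reduces to the Jacobian determinant
\[
(s_1+d_1\partial_1 s_1)(s_2+d_2\partial_2 s_2)-d_1d_2\,(\partial_2 s_1)(\partial_1 s_2)
\]
not vanishing identically. The degenerate failure to rule out is an identity of the form $f(\alpha_n(d))=d^{-1}\cdot c$ for some constant $c$, which would force $\alpha_{n+1}$ to be constant. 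Since $\alpha_n$ is dominant, such an identity would force $f$ to restrict, on a Zariski-dense subset of $\p^2$, to a monomial inversion of the type $(x,y)\mapsto(c_1/x,c_2/y)$; but such a map contracts only the three lines of $\partial\p^2$, contradicting the hypothesis that $f$ contracts a curve $C\not\subset\partial\p^2$ to a non-toric point. Ruling out this and the analogous lower-rank degeneracies (by the same $D_2$-invariance argument applied to the possibility that the image of $\alpha_{n+1}$ is contained in a single $D_2$-orbit closure) yields dominance of $\alpha_{n+1}$ and completes the induction.
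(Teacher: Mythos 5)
The openness half of your argument is fine and matches the paper, but the density half has a genuine gap in the inductive step. Your strategy requires showing that $\alpha_{n+1}(d)=d\cdot\beta(d)$ is dominant whenever $\beta=f\circ\alpha_n$ is dominant, and your case analysis of how this can fail is incomplete. The identity $\beta(d)=d^{-1}c$ is not the only degeneracy: for example $\beta(d_1,d_2)=(d_2,d_1)$ is dominant, yet $d\cdot\beta(d)=(d_1d_2,\,d_1d_2)$ has image a curve; more generally the Jacobian you write down can vanish identically without $\beta$ being the inversion. Your fallback — that a non-dominant $\alpha_{n+1}$ would have image in a $D_2$-orbit closure — is also not justified: a curve through the open orbit is not an orbit closure. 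Finally, even in the case you do treat, the identity $f(\alpha_n(d))=d^{-1}c$ only constrains the composite $f\circ\alpha_n$; it forces $f=\iota_c\circ\alpha_n^{-1}$ (with $\iota_c$ the inversion) on a dense open set, not that $f$ itself is a monomial inversion, so no contradiction with $f$ contracting a non-toric curve is obtained. (A smaller inaccuracy: the preimage of the finite base-point set under a dominant map of surfaces need not have codimension $2$ — fibers over special points can be curves — though a proper closed subset is all you actually need there.)

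The underlying issue is that dominance of $\alpha_n$ is both harder to establish and stronger than what the lemma asks for. Since the good locus is open by the Lemma~\ref{goodsetopen}-type argument and $D_2$ is irreducible, density follows from exhibiting a \emph{single} good $d$. The paper does exactly this: choose a point $q\in C$ with $q\notin\partial\p^2$ and $q\notin\Ind(f)$, and let $d\in D_2$ be the (unique) diagonal automorphism sending the non-toric point $f(C)$ to $q$. Then $(df)(C)=q$, and since $q$ lies on $C$, the map $df$ sends $q$ back to $q$, so $(df)^n(C)=q\notin\partial\p^2$ for all $n\geq 1$. This one witness, combined with openness, finishes the proof and avoids the dominance question entirely. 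If you want to salvage your induction, you would need to replace "dominant" by a hypothesis you can actually propagate (e.g.\ "the closure of the image is not contained in $\partial\p^2$ nor in the exceptional locus of $f$"), but at that point the explicit fixed-point construction is both shorter and more robust.
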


\begin{proof}
	By Lemma \ref{goodsetopen} there exists an open set $U\subset D_2$ such that $(df)^n(C)$ is a point not contained in the toric boundary for all $1\leq n\leq K$ and all $d\in U$. It is therefore enough to show that there exists one $d\in D_2$ with this property.
	For this, consider a point $q\in C$ that is not contained in $\partial \p^2$ and is not an indeterminacy point of $f$ and let $d\in D_2$ be the transformation that maps the point $f(C)$ to $q$. It follows that $(df)^n(C)=q$ for all $n\in\Z_+$.
\end{proof}

\begin{lemma}\label{denselemma}
	Let $m\in\Cr_2(\C)$ be a monomial loxodromic transformation, $n\in\Z_+$ and $p\in\p^2$ a point not contained in $\partial\p^2$. The set
	\[
	\{(dm)^n(p)\mid d\in D_2\}
	\]
	contains a dense open set in $\p^2$.
\end{lemma}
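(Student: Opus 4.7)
The plan is to reduce to a group-theoretic calculation on the open torus $U := \p^2 \setminus \partial\p^2 \cong D_2$. Since $m$ is a monomial loxodromic transformation, write $m = c \circ f_A$ with $c \in D_2$ and $A \in \GL_2(\Z)$ loxodromic. On $U$ this means $m(x) = c \cdot f_A(x)$, where $f_A$ is a group endomorphism of the algebraic torus $U$; the hypothesis $p \notin \partial\p^2$ ensures $p \in U$, and since $m$ preserves $U$, all iterates $(dm)^n(p)$ remain in $U$.

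For $d \in D_2$, set $e := dc$, so $(dm)(x) = e \cdot f_A(x)$. Using that $f_A$ is a group homomorphism, a straightforward induction on $n$ yields
\[
(dm)^n(x) \;=\; \Bigl(\prod_{k=0}^{n-1} f_A^k(e)\Bigr) \cdot f_A^n(x).
\]
Hence the map $\varphi_n : D_2 \to U$, $d \mapsto (dm)^n(p)$, factors as $d \mapsto dc$ (a translation), followed by the group endomorphism $\psi_n \colon D_2 \to D_2$ given by $e \mapsto \prod_{k=0}^{n-1} f_A^k(e)$, followed by translation by $f_A^n(p) \in U$. The key observation is that in exponential coordinates on the torus, $\psi_n$ is the endomorphism associated with the integer matrix
\[
B_n \;:=\; \sum_{k=0}^{n-1} A^k.
\]

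It remains to show $\psi_n$ is dominant, equivalently that $\det B_n \neq 0$. Let $\lambda_1, \lambda_2$ be the eigenvalues of $A$; loxodromy gives $|\lambda_1|>1>|\lambda_2|$, so $\lambda_i \neq 1$ and $\lambda_i^n \neq 1$ for every $n \geq 1$. Then
\[
\det B_n \;=\; \prod_{i=1}^{2} \frac{\lambda_i^n - 1}{\lambda_i - 1} \;\neq\; 0,
\]
so $\psi_n$ is an isogeny of tori, and $\varphi_n$ is dominant onto $U$. By Chevalley's theorem the image of $\varphi_n$ is constructible, and being dense in the irreducible surface $U$ it contains a Zariski dense open subset of $U$, hence of $\p^2$. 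The only mildly technical step is establishing the iteration formula and identifying $\psi_n$ with $B_n$; once this is done, the loxodromy of $A$ excludes exactly the roots of unity that could force $\det B_n = 0$, so no real obstacle remains.
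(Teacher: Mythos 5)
Your proof is correct and follows essentially the same route as the paper: reduce to the pure monomial part, express $(dm)^n$ as the translate of $m^n$ by the image of $d$ under the torus endomorphism attached to $B_n=\sum_{k=0}^{n-1}A^k$, and use loxodromy (no eigenvalue of $A^n$ equal to $1$, via $(A-\id)B_n=A^n-\id$) to get $\det B_n\neq 0$ and hence dominance. The only cosmetic differences are that you absorb the diagonal factor of $m$ by translating the parameter rather than invoking Lemma~\ref{puremonomial}, and you conclude dominance from the eigenvalue product formula and the isogeny property rather than the Smith normal form used in the paper.
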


\begin{proof}
	By Lemma \ref{puremonomial} we may assume that $m\in\GL_2(\Z)$. We can write $m=(x^ay^b,x^cy^d)$ for some matrix $A\coloneqq\left(\begin{array}{cc}
	a & b \\ 
	c & d
	\end{array}  \right) \in\GL_2(\Z)$. Let $d=(d_1x, d_2y)\in D_2$. One calculates $md=d'm$, where $d'=({d_1}^a{d_2}^bx, {d_1}^c{d_2}^d y)$. Let  $B\coloneqq\left(\begin{array}{cc}
	r & s \\ 
	t & u
	\end{array}  \right)=\id+A+\cdots+A^{n-1}$. Then $(dm)^n=d'm^n$, where $d'=({d_1}^r{d_2}^sx, {d_1}^t{d_2}^u y)$. In order to prove the lemma, we need to show that the morphism $\varphi_B\colon D_2\to D_2$ given by $(d_1x, d_2 y)\mapsto ({d_1}^r{d_2}^sx, {d_1}^t{d_2}^u y)$ is dominant. First note that $(A-\id)B=A^n -\id$. Since $A$ is loxodromic, $A^n$ does not have $1$ as an eigenvalue; hence $\det(A^n-\id)\neq 0$ and therefore $\det(B)\neq 0$. By the Smith normal form, we can write $B=M_1 DM_2$, where $M_1, M_2\in\GL_2(\Z)$ and $D$ is a diagonal integer matrix of rank two as $\det(B)\neq 0$. Since the morphisms from $D_2$ to itself induced by the matrices $M_1, D$ and $M_2$ are all dominant, the morphism $\varphi_B$ is dominant.
\end{proof}

\begin{lemma}\label{lemmacontract}
	Let $f\in\Cr_2(\C)$ be a birational transformation that contracts a curve $C$ that is not contained in $\partial\p^2$.
	Let $m\in\Cr_2(\C)$ be a loxodromic monomial transformation. Then, for every $K\in\Z_+$, there exists a dense open subset $U_K\subset D_2$ such that for all $d\in U_K$ the birational transformation $h_d\coloneqq fdmf^{-1}$ 
	satisfies the following properties:
	\begin{itemize}
		\item The strict transform $\tilde{C}_d\coloneqq (dmf^{-1})^{-1}(C)$ is a curve not contained in $\partial\p^2$, in particular, $h_d$ contracts $\tilde{C}_d$;
		\item $h_d^{-l}(\tilde{C}_d)$ is a curve not contained in $\partial\p^2$ for all $1\leq l\leq K$.
	\end{itemize}
\end{lemma}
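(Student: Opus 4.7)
My plan is to reformulate both conditions in terms of the strict transforms $C_k(d):=(dm)^{-k}(C)$ for $1\leq k\leq K+1$, adapt Lemma~\ref{denselemma} to inverse iterates, and then intersect finitely many dense open subsets of $D_2$. A direct composition calculation yields $\tilde{C}_d=f(dm)^{-1}(C)$ and
\[
h_d^{-l}(\tilde{C}_d)=f(dm)^{-(l+1)}(C)=f(C_{l+1}(d))\qquad(0\leq l\leq K),
\]
so the two conclusions of the lemma amount to requiring, for each $1\leq k\leq K+1$, that $C_k(d)\not\subset\partial\p^2$, that $f$ does not contract $C_k(d)$, and that $f(C_k(d))\not\subset\partial\p^2$. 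Since $dm\in\GL_2(\Z)\ltimes D_2$ has the same loxodromic linear part $A\in\GL_2(\Z)$ as $m$, each $(dm)^{-k}$ is itself a monomial Cremona transformation whose indeterminacy and exceptional locus are contained in $\partial\p^2$; as $C\not\subset\partial\p^2$, the strict transform $C_k(d)$ is automatically a curve.

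To adapt Lemma~\ref{denselemma} I would use the identity $m^{-1}d^{-1}=(m^{-1}d^{-1}m)m^{-1}$ repeatedly to obtain
\[
(dm)^{-k}=e_1e_2\cdots e_k\,m^{-k},\qquad e_j:=m^{-j}d^{-1}m^j\in D_2.
\]
Because conjugation by $m$ acts on $D_2$ through the matrix $A$, the exponent of $e_1\cdots e_k$ equals $-A^{-k}Bv$, with $v$ the exponent of $d$ and $B:=I+A+\cdots+A^{k-1}$. Exactly as in the proof of Lemma~\ref{denselemma}, $(A-I)B=A^k-I$ is invertible since $A$ is loxodromic, so $B$, and hence $-A^{-k}B$, is invertible. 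Therefore the morphism $D_2\to D_2$, $d\mapsto e_1\cdots e_k$, is dominant, and composing with the orbit map of the $D_2$-action at $m^{-k}(p)\not\in\partial\p^2$ shows that $d\mapsto(dm)^{-k}(p)$ has dense image in $\p^2$ for any $p\not\in\partial\p^2$.

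Finally, I would fix a general $p\in C\setminus\partial\p^2$ and let $Z\subset\p^2$ be the union of $\partial\p^2$, the finitely many exceptional components of $f$, the base-points of $f$, and $f^{-1}(\partial\p^2)$; this is a proper closed subset of $\p^2$. By the dominance statement, each preimage
\[
V_k:=\{d\in D_2\mid (dm)^{-k}(p)\in Z\}
\]
is a proper closed subset of $D_2$, so $U_K:=\bigcap_{k=1}^{K+1}(D_2\setminus V_k)$ is dense open in $D_2$. For any $d\in U_K$ and any $1\leq k\leq K+1$, the point $(dm)^{-k}(p)\in C_k(d)$ avoids $\partial\p^2$, every exceptional component of $f$, and $f^{-1}(\partial\p^2)$, which simultaneously forces $C_k(d)\not\subset\partial\p^2$, $f$ not to contract $C_k(d)$ (an irreducible curve contracted by $f$ must coincide with one of its exceptional components), and $f(C_k(d))\not\subset\partial\p^2$. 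The hard part is establishing the inverse-iterate version of Lemma~\ref{denselemma}; once this density statement is available, the rest reduces to a routine intersection of finitely many dense open conditions.
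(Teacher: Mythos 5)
Your proposal is correct and follows essentially the same route as the paper: fix a point $p\in C\setminus\partial\p^2$, use the density of $\{(dm)^{-k}(p)\mid d\in D_2\}$ to avoid the closed set consisting of $\partial\p^2$, the exceptional locus of $f$ and the curves sent by $f$ into $\partial\p^2$, and intersect the finitely many resulting dense open conditions. The only difference is that you spell out the adaptation of Lemma~\ref{denselemma} to the inverse iterates $(dm)^{-k}$ via the decomposition $(dm)^{-k}=e_1\cdots e_k m^{-k}$, a step the paper leaves implicit by citing Lemma~\ref{denselemma} directly; your verification that $-A^{-k}B$ is invertible is the correct justification.
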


\begin{proof}
	Let $p\in C$ be a point that is not contained in $\partial\p^2$. Let $X$ be the union of the exceptional locus of $f$ and the curves that are mapped to $\partial\p^2$ by $f$. Denote by $V_l\subset D_2$ the set of all diagonal automorphisms $d$ such that $(dm)^{-l}(p)$ is not contained in $X$, where $1\leq l\leq K$. By Lemma \ref{denselemma}, the sets $V_l$ contain a subset, which is open and dense in $D_2$. Hence, the intersection $V_1\cap\dots\cap V_{K+1}$ contains a subset $U_K$ that is open and dense in $D_2$. Since, by the construction of $U_K$, the point $(dm)^{-1}(p)$ is not contained in $X$, it follows that the strict transform $\tilde{C}_d=(dmf^{-1})^{-1}(C)$ is indeed a curve and not contained in $\partial \p^2$. This is because $(dm)^{-1}(C)$ is a curve as $dm$ is monomial and $C$ is not contained in $\partial\p^2$. Moreover, the choice of $d$ ensures that $(dm)^{-1}(C)$ is not contained in $X$. Similarly, $(h_d)^{-l}(\tilde{C}_d)=f(dm)^{-l}f^{-1}(\tilde{C}_d)=f(dm)^{-l-1}(C)$ is a curve not contained in $\partial \p^2$ for all $1\leq l\leq K$. 
\end{proof}

\begin{lemma}\label{nontoricdense}
	Let  $f\in\Cr_2(\C)$ a birational transformation that contracts a curve $C$ that is not contained in $\partial \p^2$ and assume that $f(C)$ is a point not contained in $\partial \p^2$. 
	Let $m\in\Cr_2(\C)$ be a monomial loxodromic birational transformation. Then, for every $K\in\Z_+$, there exists a dense open subset $U_K\subset D_2$ and for each $d\in U_K$ there exists a dense open subset $V_K^d\subset D_2$ such that:
	\begin{itemize}
	\item  for all elements $d_1\in U_K$ and for all $d_2\in V_K^{d_1}$ the birational transformation $(d_2fd_1mf^{-1})^K$  is loxodromic and contracts $K$ different irreducible curves. 
	\end{itemize}
\end{lemma}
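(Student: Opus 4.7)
The plan is to combine the three preceding lemmas with the lower semi-continuity of the dynamical degree (Theorem \ref{lowersemi1}). Given $d_1 \in D_2$, set $h_{d_1} \coloneqq f d_1 m f^{-1}$, so that $(d_2 f d_1 m f^{-1})^K = (d_2 h_{d_1})^K$. Since $d_1 m$ lies in $\GL_2(\Z) \ltimes D_2$ with the same underlying integer matrix as $m$, the transformation $d_1 m$ is loxodromic, and hence so is $h_{d_1}$. I take $U_K$ to be exactly the dense open set furnished by Lemma \ref{lemmacontract} for the given $f$, $C$ and $m$.

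Fix any $d_1 \in U_K$ and write $C' \coloneqq \tilde{C}_{d_1} = (d_1 m f^{-1})^{-1}(C)$. By Lemma \ref{lemmacontract}, $C'$ is an irreducible curve not contained in $\partial\p^2$, and $h_{d_1}^{-l}(C')$ is a curve not contained in $\partial\p^2$ for every $1 \leq l \leq K$. Unwinding the definitions, $h_{d_1}(C') = f(C)$, which is a non-toric point by hypothesis. Hence $h_{d_1}$ contracts the non-boundary curve $C'$ to a non-toric point, so Lemma \ref{nontoriccontract} applied to the pair $(h_{d_1}, C')$ yields a dense open $W^{d_1} \subset D_2$ such that $(d_2 h_{d_1})^l(C')$ is a non-toric point for every $d_2 \in W^{d_1}$ and every $1 \leq l \leq K$.

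Next I apply Lemma \ref{goodsetopen} with $(f, C)$ replaced by $(h_{d_1}, C')$. It produces open sets $U_1^{d_1}, U_2^{d_1} \subset D_2$: the first consists of those $d_2$ for which every backward iterate $(d_2 h_{d_1})^{-l}(C')$, $1\leq l\leq K$, is a curve outside $\partial\p^2$, and the second of those $d_2$ for which every forward iterate $(d_2 h_{d_1})^{l}(C')$ is a non-toric point. The identity lies in $U_1^{d_1}$ by Lemma \ref{lemmacontract}, while $W^{d_1} \subset U_2^{d_1}$; since $D_2$ is irreducible, both open sets are dense, and for any $d_2 \in U_1^{d_1} \cap U_2^{d_1}$ the transformation $(d_2 h_{d_1})^K$ contracts at least $K$ distinct irreducible curves.

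It remains to enforce loxodromy. Since $h_{d_1}$ is loxodromic and conjugation-by/multiplication-by $d_2$ varies it algebraically, Theorem \ref{lowersemi1} ensures that the set $\Omega^{d_1} \coloneqq \{d_2 \in D_2 \mid \lambda(d_2 h_{d_1}) > 1\}$ is open; it contains $\id$ and is therefore dense. I then define
\[
V_K^{d_1} \coloneqq U_1^{d_1} \cap W^{d_1} \cap \Omega^{d_1},
\]
a dense open subset of $D_2$ on which both conclusions hold. The only real subtlety is non-emptiness of each of the three constituent sets, which is ensured respectively by Lemma \ref{lemmacontract}, Lemma \ref{nontoriccontract}, and the semi-continuity theorem; the rest is careful bookkeeping of how the iterated base-points and exceptional curves of $d_2 h_{d_1}$ depend on $d_2$.
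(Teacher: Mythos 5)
Your proof is correct and follows essentially the same route as the paper's: take $U_K$ from Lemma \ref{lemmacontract}, apply Lemma \ref{nontoriccontract} to $h_{d_1}=fd_1mf^{-1}$ and the curve $\tilde C_{d_1}$ (using that $h_{d_1}(\tilde C_{d_1})=f(C)$ is a point off $\partial\p^2$) to get $V_K^{d_1}$, and finish with Theorem \ref{lowersemi1} for loxodromy. Your extra appeal to Lemma \ref{goodsetopen}, to guarantee that the backward iterates under $d_2h_{d_1}$ (not just under $h_{d_1}$) remain curves off the boundary, is a detail the paper leaves implicit and is a welcome precision rather than a departure.
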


\begin{proof}
	By Lemma \ref{lemmacontract}, there exists a dense open set $U_K\subset D_2$ such that for all $d_1\in U_K$ the strict transform $\tilde{C}\coloneqq (dmf^{-1})^{-1}(C)$ is a curve not contained in $\partial\p^2$ and such that $(fd_1mf^{-1})^{-l}(\tilde{C})$ is a curve not contained in $\partial \p^2$ for all $1\leq l\leq K$ and all $d\in U_K$. 
	
	Fix now any $d_1\in U_K$. Since, by assumption, $fd_1mf^{-1}(\tilde{C})$ is not contained in $\partial \p^2$, we can apply Lemma \ref{nontoriccontract}. In other words, there exists a dense open subset $V_K^{d_1}\subset D_2$ such that $(d_2fd_1mf^{-1})^l(\tilde{C})$ is a point not contained in the toric boundary for all $d_2\in V_K^{d_1}$ and all $1\leq l\leq K$. This implies in particular, that $(d_2fd_1mf^{-1})^K$ contracts $K$ different curves, namely the curves $(d_2fd_1mf^{-1})^{-l}(\tilde{C})$ for $1\leq l\leq K$. After possibly shrinking $V_K^{d_1}$, we may assume that $d_2fd_1mf^{-1}$ is loxodromic, by Theorem \ref{lowersemi1}.
\end{proof}

\begin{lemma}\label{easycase}
	Let $f\in\Cr_2(\C)$ be a birational transformation that contracts a curve $C$ that is not contained $\partial \p^2$ and assume that $f(C)$ is a point not contained in $\partial \p^2$. 
	Let $m\in\Cr_2(\C)$ be a monomial loxodromic birational transformation and let $\Delta_2\subset D_2$ be a dense subgroup. Then the  group $\langle f, m, \Delta_2 \rangle$ contains a loxodromic element that is not of monomial type.
\end{lemma}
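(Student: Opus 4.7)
My plan is to combine Lemma \ref{nontoricdense} (which produces elements of the form $g_{d_1,d_2}\coloneqq d_2 f d_1 m f^{-1}$ that are loxodromic and whose $K$-th power contracts at least $K$ different irreducible curves) with Lemma \ref{boundedei} (which uniformly bounds the number of contracted components $E(h^n)$ of iterates of a loxodromic element $h$ of monomial type, in terms of $\deg(h)$). The point is that for $d_1,d_2$ ranging over $D_2$, the element $g_{d_1,d_2}$ has degree bounded independently of $d_1,d_2$, whereas it contracts arbitrarily many curves. So $g_{d_1,d_2}$ cannot be of monomial type, provided we choose $d_1,d_2$ inside the given dense subgroup $\Delta_2$; this is possible because $\Delta_2$ is Zariski dense in $D_2$.

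More precisely, the plan unfolds as follows. First I would note that $\deg(g_{d_1,d_2})\le \deg(f)\cdot\deg(m)\cdot\deg(f^{-1})$ for every $d_1,d_2\in D_2$, since $D_2\subset\PGL_3(\C)$ consists of degree-one maps. Set $D\coloneqq\deg(f)\cdot\deg(m)\cdot\deg(f^{-1})$. Applying Lemma \ref{boundedei} (with $S=\p^2$ and $\pi=\id$) to any loxodromic transformation $h$ of monomial type with $\deg(h)\le D$ yields a constant $K_0=K_0(D)$ such that $E(h^n)\le K_0$ for every $n$. Fix an integer $K>K_0$.

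Next I invoke Lemma \ref{nontoricdense} with this $K$: it provides a Zariski-dense open subset $U_K\subset D_2$ and, for every $d_1\in U_K$, a Zariski-dense open subset $V_K^{d_1}\subset D_2$ such that for all $d_1\in U_K$ and all $d_2\in V_K^{d_1}$ the transformation $(g_{d_1,d_2})^K$ is loxodromic and contracts at least $K$ distinct irreducible curves. Since $\Delta_2$ is Zariski dense in $D_2$, I can pick $d_1\in U_K\cap\Delta_2$; then $V_K^{d_1}\cap\Delta_2$ is also non-empty, so I pick $d_2\in V_K^{d_1}\cap\Delta_2$. The resulting element $g\coloneqq g_{d_1,d_2}=d_2 f d_1 m f^{-1}$ lies in $\langle f,m,\Delta_2\rangle$ and is loxodromic.

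Finally, to conclude I argue by contradiction: if $g$ were of monomial type, then since $\deg(g)\le D$ the bound above would give $E(g^K)\le K_0<K$. But $g^K$ contracts at least $K$ distinct irreducible curves, hence $E(g^K)\ge K$, a contradiction. Therefore $g$ is a loxodromic element of $\langle f,m,\Delta_2\rangle$ that is not of monomial type, which is what we had to prove. The only mildly delicate step is ensuring that one can indeed pick $d_1,d_2$ in $\Delta_2$ within the Zariski-dense opens $U_K$ and $V_K^{d_1}$, but this is immediate from the density assumption on $\Delta_2$; the real substance of the argument is packaged in Lemmas \ref{boundedei} and \ref{nontoricdense}.
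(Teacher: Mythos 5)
Your argument is correct and is essentially the paper's own proof: both fix a uniform degree bound $D$ for the elements $d_2fd_1mf^{-1}$, use Lemma \ref{boundedei} to get a bound $K_0$ on the number of curves contracted by iterates of monomial-type loxodromic elements of degree $\leq D$, and then apply Lemma \ref{nontoricdense} with $K>K_0$ together with the Zariski density of $\Delta_2$ to produce the desired non-monomial loxodromic element. The only cosmetic difference is that the paper writes the degree bound as $\deg(f)^2\deg(m)$, which agrees with your $\deg(f)\deg(m)\deg(f^{-1})$.
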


\begin{proof}
	Let $d\coloneqq \deg(f)^2\deg(m)$ and let $K$ be the constant given by Lemma \ref{boundedei} such that all elements in $g\in\Cr_2(\C)$ of monomial type of degree $\leq d$ satisfy the property that the number of irreducible curves $E(g^n)$ contracted by $g^n$ is $<K$ for all $n$. Let $U_K\subset D_2$ and $V_K^{d_1}$ for all $d_1\in U_K$ be the subset given by Lemma \ref{nontoricdense} and fix a $d_1\in U_K\cap\Delta_2$ and $d_2\in V_K^{d_1}\cap \Delta_2$. The birational transformation $d_2fd_1mf^{-1}$ is therefore loxodromic, of degree $\leq d$ and $(d_2fd_1mf^{-1})^K$ contracts $K$ different irreducible curves. It follows that $d_2fd_1mf^{-1}\in \langle f, m, \Delta_2 \rangle$ is loxodromic but not of monomial type.
\end{proof}

\begin{lemma}\label{technicalconj}
	Let $f\in\Cr_2(\C)$ be a birational transformation that contracts a curve $C$ that is not contained in $\partial\p^2$ to a non-toric point in $\mathcal{B}(\p^2)$. Let $m\in\Cr_2(\C)$ be a loxodromic monomial transformation and $\Delta_2\subset D_2$ be a dense subgroup. Then the group $\langle f, m, \Delta_2 \rangle$ contains a loxodromic element $f'$ with the following properties:
	\begin{itemize}
		\item  contracts a curve $\tilde{C}$ not contained in $\partial\p^2$ to a non-toric point in $\mathcal{B}(\p^2)$;
		\item  there exists a dense open subset $U\subset D_2$ such that for all $d\in U$ the transformation $f'd^{-1}f'^{-1}d$ is loxodromic.
	\end{itemize} 
\end{lemma}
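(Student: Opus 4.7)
The plan is to produce a loxodromic element $h \in \langle f, m, \Delta_2 \rangle$ that is not of monomial type and whose first few iterates all contract the same non-toric curve to non-toric points of $\mathcal{B}(\p^2)$; a suitable power of $h$ will then serve as $f'$. Following the template of Lemma~\ref{easycase}, I would consider $h_{d_1,d_2} := d_2\, f\, d_1\, m\, f^{-1}$ for $d_1,d_2 \in \Delta_2$. A direct computation gives $h_{d_1,d_2}(\tilde{C}_{d_1}) = d_2(p)$, where $p := f(C)$ and $\tilde{C}_{d_1} := (d_1 m f^{-1})^{-1}(C) = f m^{-1} d_1^{-1}(C)$. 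Since $C \not\subset \partial\p^2$ and $m^{-1}$ preserves the open torus $\p^2 \setminus \partial\p^2$, the strict transform $\tilde{C}_{d_1}$ is not contained in $\partial\p^2$ for generic $d_1$; and since $p$ is non-toric by hypothesis and $d_2 \in D_2$ preserves non-toric points, $d_2(p)$ is non-toric in $\mathcal{B}(\p^2)$.

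The next step is to pick $d_1, d_2 \in \Delta_2$ so that $h := h_{d_1, d_2}$ is loxodromic and not of monomial type. The conjugate $f\, d_1\, m\, f^{-1}$ of the loxodromic monomial map $d_1\, m$ is loxodromic, so Theorem~\ref{lowersemi1} guarantees that $h_{d_1,d_2}$ remains loxodromic for $d_2$ in a dense open of $D_2$. For the non-monomial type, I would adapt the arguments of Lemmas~\ref{goodsetopen}--\ref{nontoricdense} to the present weaker setting where $p$ may lie on $\partial\p^2$ (but stays non-toric) or be infinitely near. The essential adaptation is to relax conditions of the form \emph{``not in $\partial\p^2$''} to \emph{``non-toric''} and, when $p$ is infinitely near, to carry out the argument on a $D_2$-equivariant blow-up of toric points $\pi\colon S \to \p^2$ that realizes $p$ as a proper point $p_S \in S$. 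The resulting variants will produce $d_1, d_2 \in \Delta_2$ such that $h^{\ell}(\tilde{C}_{d_1})$ is non-toric in $\mathcal{B}(\p^2)$ for every $1 \leq \ell \leq K$, and such that $h^K$ contracts at least $K$ distinct irreducible curves; since $\deg(h) \leq \deg(f)^2 \deg(m)$ is bounded independently of $K$, Lemma~\ref{boundedei} then forces $h$ to be not of monomial type as soon as $K$ is large enough.

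Finally, Lemma~\ref{loxoconj} applied to $h$ would furnish an integer $n_0 \in \Z_+$ and a dense open $V \subset D_2$ with $h^{n_0} d^{-1} h^{-n_0} d$ loxodromic for every $d \in V$. Since the integer $n_0$ depends only on the dynamical data of $h$, which is itself controlled by the degree bound $\deg(h) \leq \deg(f)^2 \deg(m)$, one can bound $n_0$ a priori by a constant $N = N(\deg(f), \deg(m))$. Choosing $K \geq N$ in the previous step, the element $f' := h^{n_0}$ will lie in $\langle f, m, \Delta_2\rangle$, will be loxodromic, and will contract $\tilde{C}_{d_1}$ (a curve not in $\partial\p^2$) to the non-toric point $h^{n_0}(\tilde{C}_{d_1}) \in \mathcal{B}(\p^2)$; meanwhile $f' d^{-1} f'^{-1} d$ will be loxodromic for every $d$ in the dense open $U := V$, as required.

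The principal obstacle lies in the middle step: extending the chain of lemmas culminating in Lemma~\ref{nontoricdense} to the setting where $p$ is only assumed non-toric in $\mathcal{B}(\p^2)$ rather than a proper point off $\partial\p^2$. The delicate point is the non-toric variant of Lemma~\ref{nontoriccontract}, where one needs, for a given non-toric $p$, an element $d \in D_2$ with $d_\bullet(p) \in C$; when $p$ lies on a one-dimensional $D_2$-orbit (i.e.\,on a component of $\partial\p^2$ away from the corners), this remains possible because $C \not\subset \partial\p^2$ forces $C$ to meet every irreducible component of $\partial\p^2$. The three cases (proper non-toric in $\p^2\setminus\partial\p^2$, proper non-toric on $\partial\p^2$, and infinitely near non-toric) each have to be handled with small modifications.
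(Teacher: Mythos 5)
Your overall shape (conjugating $m$ by $f$, twisting by elements of $\Delta_2$, and invoking Lemma~\ref{loxoconj} on the result) matches the paper, but there are two genuine gaps. First, the bulk of your argument goes into making $h$ \emph{not of monomial type}, and this is both unnecessary and not actually carried out. Lemma~\ref{loxoconj} only requires its input to not lie \emph{literally} in $\GL_2(\Z)\ltimes D_2$, and $fd_1mf^{-1}$ already satisfies this for free: it contracts the non-toric curve $\tilde{C}_{d_1}$, whereas elements of $\GL_2(\Z)\ltimes D_2$ contract only coordinate lines. The paper's $f'$ is simply $(fd_1mf^{-1})^n$, which \emph{is} of monomial type, and no contraction-counting is needed. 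Your substitute for this observation is a deferred ``adaptation'' of Lemmas~\ref{goodsetopen}--\ref{nontoricdense} to non-toric points on $\partial\p^2$ or infinitely near points, which is exactly the hard content of the much later Lemma~\ref{mainlemmalox} and cannot be waved through here; moreover your key sub-claim in that adaptation is false: a curve $C\not\subset\partial\p^2$ need not meet the open $D_2$-orbit of a boundary line $L$ (e.g.\ a conic through the three toric points meets each line only in toric points), so you cannot always find $d\in D_2$ with $d_\bullet(p)\in C$ when $p$ lies on the open part of $L$.

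Second, the circularity between $K$ and $n_0$ is not resolved. You need $K\geq n_0$ to guarantee that the orbit of $\tilde{C}_{d_1}$ is controlled up to time $n_0$, but $n_0$ comes from applying Lemma~\ref{loxoconj} to $h$, which is only defined after $d_1,d_2$ have been chosen inside open sets depending on $K$. Your escape --- that $n_0$ is bounded by a constant depending only on $\deg(h)$ --- is unjustified: in the proof of Lemma~\ref{loxoconj} the exponent $n$ is chosen so that iterates push chosen neighborhoods of boundary fixed points into one another, and this depends on the actual configuration of the axes, not just on the degree. The paper breaks the circle differently: it fixes an auxiliary $e\in\bigcap_{K\in\Z_+}U_K$ (a countable intersection of dense opens in $D_2\cong(\C^*)^2$, hence non-empty over $\C$), applies Lemma~\ref{loxoconj} to $femf^{-1}$ to determine $n$ \emph{first}, and then uses semicontinuity of the dynamical degree (Theorem~\ref{lowersemi1}) to replace $e$ by some $d_1\in U_n\cap\Delta_2$ for which the same $n$ works. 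Without that device, or a correct uniform bound on $n_0$, your construction does not close.
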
 

\begin{proof}
	By Lemma \ref{lemmacontract}, there exists for each $K>0$ a dense open set $U_K\subset D_2$ such that $\tilde{C}_d\coloneqq (dmf^{-1})^{-1}(C)$ is a curve not contained in $\partial\p^2$ and  $(fdmf^{-1})^{-l}(\tilde{C}_d)$ is a curve not contained in $\partial\p^2$ for all $1\leq l\leq K$. We fix a $e\in\bigcap_{K\in\Z_+} U_K$. 
	
	By Lemma \ref{loxoconj}, there exists an $n\in\Z_+$ and a dense open subset $U\subset D_2$ such that the transformation $(femf^{-1})^n d^{-1}(femf^{-1})^{-n}d$ is loxodromic for all $d\in U$. By Theorem \ref{lowersemi1}, the subset $V\subset (D_2)^2$ consisting of elements $(d_1, d_2)$ such that $(fd_1mf^{-1})^n d_2^{-1}(fd_1mf^{-1})^{-n}d_2$ is loxodromic, is open and dense. Define the dense open set $V'\coloneqq V\cap (U_n\times D_2)$ and fix $(d_1, d_2)\in V'\cap \Delta_2\times \Delta_2$. We define now $f'\coloneqq (fd_1mf^{-1})^n\in\langle f, m, \Delta_2 \rangle $ and  $\tilde{C}\coloneqq(fdmf^{-1})^{-n+1}(\tilde{C}_{d_1})$. Again by Theorem~\ref{lowersemi1}, there exists a dense open $U\subset D_2$ such that $f'd^{-1}f'^{-1}d$ is loxodromic for all $d\in U$. 
\end{proof}

\begin{lemma}\label{mainlemmalox}
	Let $G\subset\Cr_2(\C)$ be a simple group that contains a loxodromic monomial element $m$. Then $G$ contains no element that contracts a curve that is not contained in the toric boundary $\partial \p^2$ to a non-toric point in $\mathcal{B}(\p^2)$.
\end{lemma}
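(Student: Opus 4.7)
The strategy is proof by contradiction: assume some $f\in G$ contracts a curve $C\not\subset\partial\p^2$ to a non-toric point of $\mathcal{B}(\p^2)$, and produce a loxodromic element in $G$ that fails to be of monomial type, contradicting Lemma~\ref{sheplemma}. First I would \emph{normalise the torus}: applying Lemma~\ref{sheplemma} to $m\in G$ yields an $h\in\Cr_2(\C)$ sending $m$ into $\GL_2(\Z)\ltimes D_2$ and an infinite subgroup $\Delta_2^m\subset G$ onto a dense subgroup of $D_2$. Since $m$ is already in $\GL_2(\Z)\ltimes D_2$ by hypothesis, Lemma~\ref{loxogllemma} forces $h\in\GL_2(\Z)\ltimes D_2$; consequently $\Delta_2:=\Delta_2^m$ is itself a dense subgroup of $D_2$ contained in $G$.

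Next I would invoke Lemma~\ref{technicalconj} on the triple $(f,m,\Delta_2)$ to replace $f$ by $f'\in\langle f,m,\Delta_2\rangle\subset G$ which still contracts some $\tilde C\not\subset\partial\p^2$ to a non-toric point of $\mathcal{B}(\p^2)$ and which moreover has the property that $f'd^{-1}f'^{-1}d$ is loxodromic for $d$ in a dense open $U\subset D_2$. This extra feature is designed to substitute for the ``$f(C)$ is a proper non-toric point'' hypothesis of Lemma~\ref{easycase}. For each $d\in U\cap\Delta_2$ the element
\[
g_d := f' d^{-1} f'^{-1} d \in G
\]
is loxodromic, hence of monomial type by Lemma~\ref{sheplemma}. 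Lemma~\ref{boundedei} then bounds the number $E(g_d^n)$ of irreducible curves contracted by $g_d^n$ by a constant $E_0$ depending only on $\deg(g_d)$ and on the birational map conjugating $g_d$ into $\GL_2(\Z)\ltimes D_2$.

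The rest of the argument is a controlled exhibition of many contracted curves in a single power of $g_d$. The natural candidate curve is $A_d := d^{-1}f'd(\tilde C)$: a direct computation (for generic $d$, so that $d(\tilde C)$ is not among the finitely many curves contracted by $f'$) gives $g_d(A_d)=f'(\tilde C)$, which is a point, so $g_d$ indeed contracts $A_d$. Running openness and density arguments in the spirit of Lemmas~\ref{goodsetopen}, \ref{denselemma}, \ref{lemmacontract}, and \ref{nontoricdense}, combined with the lower semicontinuity of the dynamical degree (Theorem~\ref{lowersemi1}), one selects, for any prescribed $K$, a $d\in U\cap\Delta_2$ for which the backward iterates $g_d^{-l}(A_d)$, $0\le l\le K-1$, are $K$ pairwise distinct curves off $\partial\p^2$ and the forward iterates $g_d^l(A_d)$, $1\le l\le K$, remain non-toric points. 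Then $g_d^K$ contracts $K$ distinct irreducible curves, so $K\le E_0$; taking $K>E_0$ delivers the contradiction.

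The principal obstacle is to run this openness/density argument \emph{without} the hypothesis that $f'(\tilde C)$ is a proper point---it may only be infinitely near, which threatens the standard control over forward iterates via Lemma~\ref{nontoriccontract}. My approach would be to lift the entire discussion to a blow-up $\pi\colon S\to\p^2$ of toric points on which the lift of $f'$ sends the strict transform of $\tilde C$ to a proper non-toric point of $S$, such $\pi$ being furnished by the remark preceding Lemma~\ref{nontoriccontract}. Because $\pi$ is a blow-up of toric points the $D_2$-action, the toric boundary, and the notion of monomial type all transfer to $S$, so the supporting lemmas of Section~\ref{basepoints} apply on $S$ essentially verbatim, and the contradiction can be extracted there.
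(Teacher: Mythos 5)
Your setup (normalising the torus via Lemmas~\ref{sheplemma} and \ref{loxogllemma}, passing to $f'$ via Lemma~\ref{technicalconj}, and aiming to contradict monomial type by making a single power contract more curves than Lemma~\ref{boundedei} allows) matches the paper's strategy, and your candidate curve $A_d$ with $g_d(A_d)=f'(\tilde C)$ is sound. The gap is in the step you compress into ``running openness and density arguments in the spirit of Lemmas~\ref{goodsetopen}, \ref{denselemma}, \ref{lemmacontract}, \ref{nontoricdense}'': every one of those lemmas controls the forward orbit of the contracted point by moving it around with $D_2$, and this only works because the point lies \emph{off} the toric boundary, where the $D_2$-orbit is dense. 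But Lemma~\ref{easycase} (which the paper invokes first, and which your argument implicitly relies on) forces the opposite: since $G$ is simple, \emph{every} element of $G$ that contracts a non-boundary curve must send it to a point lying on or above $\partial\p^2$. So $f'(\tilde C)$, and all subsequent forward iterates $g_d^l(A_d)$, live on boundary components, where the $D_2$-orbits are one-dimensional and the genericity arguments are unavailable. Concretely, nothing prevents $g_d^{K-l}(A_d)$ from landing on an indeterminacy point of $g_d$ and being blown up into a curve, which destroys the count of contracted curves; and you cannot perturb your way out of this with diagonal elements, because $d(p_1)$ stays on the same boundary line for all $d\in D_2$. Your proposed remedy --- blowing up toric points so that $f'(\tilde C)$ becomes a proper point --- fixes only the ``infinitely near'' issue, not this one: on the blow-up the point is still a boundary point, and Lemmas~\ref{denselemma} and \ref{nontoriccontract} still do not apply.

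This is exactly where the paper's proof becomes long: it replaces $g_d$ by $h=d_1g_d e m g_d^{-1}$, inserting the loxodromic monomial $m$ so that Lemma~\ref{monomiallemmanofix} (a loxodromic monomial map fixes no boundary component) keeps the boundary dynamics moving, and it then proves a trichotomy for the image of each boundary line $L$ under $g_d$ (leaves the boundary; is contracted to a non-toric point on a \emph{fixed} new boundary line independent of the generic parameters; or can be pushed off the boundary by an auxiliary element $r\in G$ built from the dense torus attached to $g_d$ by simplicity). The first and third alternatives are excluded by Lemma~\ref{easycase}, and the second feeds an induction producing a chain of boundary lines $L_1,\dots,L_{2K}$ along which the forward orbit provably remains a sequence of non-toric \emph{points}, while the backward iterates remain curves. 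Without an argument of this kind --- in particular, without some mechanism forcing the forward orbit to keep landing at non-indeterminacy, non-toric boundary points --- your proof does not close.
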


\begin{proof}
	Since $G$ is simple it contains no tight elements. Hence, by Lemma \ref{sheplemma}, all loxodromic elements in $G$ are of monomial type and $G$ contains a subgroup  $\Delta_2$ that is dense in $D_2$. Assume now that there is an element $f\in G$ that contracts a curve $C\subset\p^2$ that is not contained in $\partial\p^2$ to a non-toric point in $\mathcal{B}(\p^2)$, i.e.\,there exists a blow-up of toric points $\pi\colon S\to \p^2$ such that $f(C)\in S$ is not a toric point (recall that, by abuse of notation, $f$ also denotes the lift of $f$ by $\pi$).
	
	The group $G$ contains no element that contracts a curve that is not contained in $\partial \p^2$ to a  point that is not contained in $\partial \p^2$, by Lemma \ref{easycase}. Hence for all elements $g\in G$ that contract a non-toric curve $D$, the point $g(D)$ is contained in $\partial \p^2$. 
	
	By Lemma \ref{technicalconj}, there exists a loxodromic element $f'\in \langle f, m, \Delta_2\rangle\subset G$ and a curve $\tilde{C}$ in $\p^2$ not contained in $\partial \p^2$ that is contracted by $f'$ to a non-toric point in $\mathcal{B}(\p^2)$, as well as a dense open set $U\subset D_2$ such that for all $d\in U$  the transformation $g_d\coloneqq d^{-1}f'df'^{-1}$ is loxodromic. Moreover, we may choose the dense open set $U\subset D_2$ in such a way that for all $d\in U$ we have that $\tilde{C}_d\coloneqq (df'^{-1})^{-1}(C')=f'd^{-1}(C)$ is a curve  not contained in $\partial \p^2$ and $g_d$ contracts $\tilde{C}_d$  to a non-toric point in $\mathcal{B}(S)$. 
	
	Let $\pi_1\colon S_1\to \p^2$ be a blow-up of toric points and let $L\subset \partial S_1$ be an irreducible component of $\partial S_1$. We claim  that one of the following is true:
	\begin{enumerate}
		\item There exists a dense open subset $U_L\subset U$ such that $g_d(L)$ is not contained in $\partial S_1$ for all $d\in U_L$.
		\item There exists a dense open set $U_L\subset U$ such that $g_d$ contracts $L$ to a non-toric point $p$ in $\mathcal{B}(S_1)$ for all $d\in U_L$. More precisely, there exists a blow-up of toric points $\pi_2\colon S_2\to S_1$ and an irreducible boundary component $L_2\subset\partial S_2$ such that $g_d(L)$ is a proper non-toric point of $L_2$ for all $d\in U_L$ (in particular, $L_2$ does not depend on the choice of $d\in U_L$).
		\item There exists a dense open set $U_L\subset U$ such that for every $d\in U_L\cap\Delta_2$ there is an element $r\in G$ satisfying that $rg_d(L)$ is not contained in $\partial S_1$.
	\end{enumerate}
	Let us now prove the claim. The first observation is that if $f'^{-1}(L)$ is not contained in $\partial S_1$, then there is a dense open set $U_L\subset U$ such that for all $d\in U_L$, the image $d(f'^{-1}(L))$ is neither contained in the indeterminacy locus of $f'$ nor in the set of points that is mapped by $f'$ to $\partial S_1$. This implies that for all $d\in U_L$ the image $g_d(L)=d^{-1}f'df'^{-1}(L)$ is not contained in $\partial S_1$ and we are in situation (1) of our claim. Hence, in what follows we may assume that $(f')^{-1}(L)$ is contained in $\partial S_1$.

	We now distinguish various cases:
	\begin{description}
		\item[Case (a)] Assume that there is a dense open set $U_L\subset U$ such that for all $d\in U_L$, the map $g_d$ does not contract $L$ and that $g_d(L)=L$. Since $G$ is simple, for all $d\in \Delta_2\cap U_L$ the transformation $g_d$ is loxodromic and hence of monomial type. So there exists an $h_d\in\Cr_2(\C)$ and a loxodromic monomial transformation $m_d$ such that $g_d=h_dm_dh_d^{-1}$. By assumption, $g_d(L)=L$, i.e.\,$h_dm_dh_d^{-1}(L)=L$. Note that by Lemma \ref{monomiallemmanofix}, the image $h_d^{-1}(L)$ can not be a curve, since a loxodromic monomial transformation does not preserve any curve on a blow-up of toric points. Hence, either there exists a blow-up of toric points $S_2\to S_1$ such that $h_d^{-1}(L)=L'$ for some irreducible boundary component, or $h_d^{-1}$ contracts $L$ to some non-toric point $p$ in $\mathcal{B}(S_1)$, which has to be a fixed point of $m_d$. The first is not possible, since $m_d(L')\neq L'$ by Lemma \ref{monomiallemmanofix}. In the latter case we conclude that $p$ is not contained in $\partial S_1$, using once more Lemma \ref{monomiallemmanofix}. Since $h_dm_dh_d^{-1}$ is contained in $G$, there exists a dense subgroup $\Delta_2^{m_d}\subset D_2$ such that $h_d\Delta_2^{m_d}h_d^{-1}$ is contained in $G$. Let $W\subset D_2$ be the dense open subset such that $d(p)$ is not a base-point of $h_d$ and is not contained in the set of points that is mapped to $\partial S_1$ by $h_d$. For a $c\in\Delta_2^{m_d}\cap W$ and $r\coloneqq h_dch_d^{-1}$ we have $rg_d= h_dcm_dh_d^{-1}$, and therefore $rg_d(L)=h_dcm_dh_d^{-1}(L)$ is not contained in $\partial S_1$. Hence we are in situation (3).
		\item[Case (b)] Assume that $f'^{-1}$ contracts $L$ to a non-toric point in $\mathcal{B}(S_1)$, i.e.\, there exists a blow-up of toric points $S_1'\to S_1$ and a smooth rational curve $L_1'\subset\partial S_1'$ such that $f'^{-1}(L)$ is a non-toric point on $L_1'$. Let $U_L\subset U$ be the dense open set such that $df'^{-1}(L)$ is neither contained in the indeterminacy locus of $f'$ nor in the set of points of $L_1'$ that are mapped by $f'_\bullet$ to a toric point in $\mathcal{B}(S_1')$ for all $d\in U_L$. We are then in situation (2). Moreover, $L_2$ does not depend on the choice of $d\in U_L$.
		\item[Case (c)] Assume that $f'^{-1}(L)=L_1$, where $L_1\subset\partial S_1$ is an irreducible boundary component. In this case, $g_d(L)=L$ for all $d\in U$ and we are in case (a).
		\item[Case (d)] Assume that $f'^{-1}$ contracts $L$ to a toric point in $\mathcal{B}(S_1)$. In this case, there is a blow-up of toric points $S_2\to S_1$ such that $f'^{-1}(L)=L_2$, where $L_2\subset\partial S_2$ is an irreducible boundary component. This reduces to case (c).
		
	\end{description}
	This proves the claim.

	Let $d\in U$ and consider for each $e\in D_2$ the loxodromic transformation $g_d emg_d^{-1}$. The transformation $d_1(g_d emg_d^{-1})$ is loxodromic for all $d_1$ in a dense open subset of $D_2$, by Theorem \ref{lowersemi}. The degree of $d_1(g_d emg_d^{-1})$ is at most $\deg(f')^4\deg(m)$ for all $d_1, d, e\in D_2$. By Lemma \ref{boundedei} there exists a constant $K\in\Z$, such that if $d_1(g_d emg_d^{-1})$ is of monomial type, then $(d_1(g_d emg_d^{-1}))^n$ contracts at most $K-1$ different curves for all $n\in\Z$.

	There exists for every $d\in U_1\coloneqq U$ a dense open subset $V^1_d\subset D_2$ such that $\tilde{C}_{e,d}'\coloneqq (emg_d^{-1})^{-1}(\tilde{C}_d)$ is a curve for all $e\in V_1^d$ (recall that $\tilde{C}_d$ is a curve not contained in $\partial\p^2$ that is contracted by $g_d$). Moreover, for all $d\in U_1$ and all $e\in V_1^d$ there exists a dense open subset $W_{d}^1$ of $D_2$ such that $d_1(g_d emg_d^{-1})(\tilde{C}_{e,d}')$ is not a base-point of $g_d^{-1}$ (such a dense open set exists, since $(g_d emg_d^{-1})(\tilde{C}_{e,d}')$ is not a toric point). We will now inductively add additional open conditions on the sets $U^1$, $V^1_d$ and $W_{d}^1$. If $p_1\coloneqq d_1(g_d emg_d^{-1})(\tilde{C}_{e,d}')$ is not contained in the toric boundary for some $d_1, d, e\in D_2$, then $p_1$ is not contained in the toric boundary for all $d_1, d, e$ in a dense open subset $T\subset (D_2)^3$. By chosing $(d_1, d, e)\in T\cap (\Delta_2)^3$ we obtain an element in $G$ that contracts the curve $\tilde{C}_{e,d}'$, which is not contained in the toric boundary, to a point outside the toric boundary, which is not possible, by Lemma~\ref{easycase}. Hence, $p_1$ is contained in $\partial \p^2$. After a blow-up of toric points $S_1\to \p^2$ we may assume that $p_1$ is a proper non-toric point of $\partial S_1$, which is, by our condition on $d_1$, not a base-point of $g_d^{-1}$. Let $L_1\subset\partial S_1$ be the line containing $p_1$, hence we are in one of the situations (1) to (3) described above. The situations (1) and (3) do not occur since otherwise we would obtain an element in $G$ that contracts a curve not contained in the toric boundary to a point outside the toric boundary. Hence we are in situation (2), i.e.\,there exists a dense open set $U_{L_1}\subset D_2$ such that $g_d^{-1}$ contracts $L_1$ to a non-toric point in $\mathcal{B}(S_1)$. We set $U_2\coloneqq U_{L_1}\cap U_1$. Let $S_2\to S_1$ be the blow-up of toric points such that $p_2\coloneqq g_d^{-1}(p_1)=g_d^{-1}(L_1)$ is a proper non-toric point on a line $L_2\subset \partial S_2$. The crucial point here is that $p_2$ does not depend on the choice of $d_1$ nor on the choice of $e$. After a blow-up of toric points $S_2'\to S_2$, the monomial map $m$ maps $L_2$ to another component $L_2'\subset\partial S_2'$. We define for all $d\in U_2$ the set $V_d^2$ as the set of all $e\in V_d^1$ such that $em(p_2)$ is not an indeterminacy point of $g_d$. Again, by the same argument as above, we obtain that $g_d$ contracts $L_2$ to a non-toric point in $\mathcal{B}(S_2)$ that lies on or above the toric bundary. After a blow-up of toric points $S_3\to S_2$ we may assume that $p_3\coloneqq g_d(L_2)=g_dem(p_2)=g_demg_d^{-1}(d_1g_demg_d^{-1})(\tilde{C}_{e,d}')$ is a proper non-toric point on a line $L_3\subset\partial S_3$. And again, $p_3$ does not depend on the choice of $d_1$ and $e$, so we obtain an additional open condition on the choice of $d_1$ and thus a dense open subset $W_d^2\subset W_d^1$. We now continue this process and obtain a sequence of blow-ups of toric points $S_{2k-1}\to\cdots\to S_2\to S_1$ and a sequence of lines $L_1, L_2,\dots, L_{2K}$, where $L_i\subset \partial S_i$, as well as inclusions of dense open sets $U_{2K}\subset U_{2K-1}\subset\cdots\subset U_1$ and, for every $d\in U_{2K}$, inclusions of dense open sets $W_d^K\subset\cdots \subset W_d^2\subset W_d^1$ and $V_d^K\subset\cdots \subset V_d^2\subset V_d^1$ with the property that for all $d\in U^{2K}$ and all $d_1\in W_d^K$, $e\in V_d^K$, one has that $(d_1g_demg_d^{-1})^l(\tilde{C}_{e,d}')$ is a point on the line $L_{2l-1}$ for all $1\leq l\leq K$ and that $g_d^{-1}(d_1g_demg_d^{-1})^l(\tilde{C}_{e,d}')$ is a point on the line $L_{2l}$. In particular, $(d_1g_demg_d^{-1})^l(\tilde{C}_{e,d}')$ is a point for all $1\leq l\leq K$. 
	
	We fix an element $d\in U_{2K}\cap\Delta_2$. By Lemma \ref{lemmacontract}, there exists a dense open set $V\subset D_2$, such that $(g_demg_d^{-1})^{-l}(\tilde{C}_{e,d}')$ is a curve not contained in $\partial \p^2$ for all $1\leq l\leq K$ and all $e\in V$. We fix an element $e\in V\cap V_d^K\cap\Delta_2$. By Lemma \ref{goodsetopen}, there exists a dense open set $W\subset D_2$ such that $d_1(g_demg_d^{-1})$ is loxodromic for all $d_1\in W$ and a dense open set $W'\subset D_2$ such that $(d_1g_demg_d^{-1})^{-l}(\tilde{C}_{e,d}')$ is a curve not contained in $\partial \p^2$. Fix a $d_1\in W\cap W'\cap W_d^K\cap \Delta_2$. Then the transformation $h\coloneqq d_1g_demg_d^{-1}$ has the following properties:
	\begin{enumerate}
		\item $h$ is contained in $G$;
		\item $h$ is loxodromic;
		\item $h^l(\tilde{C}_{e,d}')$ is a point for all $1\leq l\leq K$;
		\item $h^{-l}(\tilde{C}_{e,d}')$ is a curve not contained in $\partial \p^2$ for all $1\leq l\leq K$.
	\end{enumerate}
	The properties (3) and (4) imply that $h^K$ contracts $K$ different curves. Hence, by definition of $K$, $h$ is not of monomial type. But this is a contradiction to $G$ being simple, as was explained before.
\end{proof}

\begin{lemma}\label{onlytoric}
	Let $f\in\Cr_2(\C)$ be a loxodromic element with the following property: 
	\begin{itemize}
		\item For all $n\in\Z$ no non-toric curve $C$ is contracted by $f^n$ to a non-toric point in the bubble space $\mathcal{B}(\p^2)$.
	\end{itemize}
	Then either $f$ is monomial or there exists a dense open subset $U\subset D_2$ and an $n\in\Z$ such that $df^nd^{-1}f^{-n}$ is loxodromic and not of monomial type.
\end{lemma}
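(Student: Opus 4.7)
Suppose $f$ is not monomial, i.e., $f \notin \GL_2(\Z) \ltimes D_2 = \norm_{\Cr_2(\C)}(D_2)$, so $f$ does not normalize $D_2$. First, I apply Lemma \ref{loxoconj} to $f$, which produces $n \in \Z_+$ and a dense open $V \subset D_2$ such that $f^n d^{-1} f^{-n} d$ is loxodromic for every $d \in V$. Its conjugate $g_d := d(f^n d^{-1} f^{-n} d)d^{-1} = d f^n d^{-1} f^{-n}$ is therefore also loxodromic for $d \in V$, which gives the loxodromic half of the conclusion.

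To establish that $g_d$ is not of monomial type for $d$ in a dense open $U \subset V$, I would argue by contradiction: assume $g_d$ is of monomial type for every $d$ in a dense subset of $V$. Since $\deg(g_d) \leq \deg(f)^{2n+2}$ is uniformly bounded, Lemma \ref{boundedei} produces a constant $K$ depending only on $\deg(f)$ and $n$ with $E(g_d^k) \leq K$ for all such $d$ and all $k$. To violate this bound, I use that $f$ is not monomial: $f$ does not restrict to an automorphism of $(\C^*)^2 = \p^2 \setminus \partial \p^2$, whose automorphism group is precisely $\GL_2(\Z) \ltimes D_2$. Consequently either $f$ or $f^{-1}$ must have a non-toric exceptional curve or a non-toric base-point. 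Combined with the standing hypothesis (non-toric exceptional curves of $f^{\pm n}$ are contracted to toric points of $\mathcal{B}(\p^2)$), a short case analysis yields an integer $n$ (possibly negative, by swapping $f$ and $f^{-1}$) and a non-toric exceptional curve $E$ of $f^n$ whose image is a toric point $r \in \mathcal{B}(\p^2)$. For generic $d \in V$, the curve $C_d := f^n(d(E))$ is non-toric, and
\[
g_d(C_d) = df^n d^{-1} f^{-n}(f^n(d(E))) = df^n d^{-1}(d(E)) = df^n(E) = d(r) = r,
\]
so $g_d$ contracts $C_d$ to the toric point $r$.

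The final step is to exhibit, for $d$ varying in a sufficiently generic dense open subset of $V$ and some $N > K$, at least $N+1$ distinct irreducible curves contracted by $g_d^N$. My plan is to follow the iterative blow-up strategy of Lemma \ref{mainlemmalox} and its supporting Lemmas \ref{lemmacontract} and \ref{nontoricdense}: starting from $C_d$, successively form the preimages $g_d^{-l}(C_d)$ and, at each step, impose finitely many open conditions on $d$ that guarantee these preimages are honest non-toric curves, pairwise distinct, and contracted by a common power of $g_d$ to a point of $\mathcal{B}(\p^2)$. The main obstacle is precisely this construction: the target $r$ is a base-point of $g_d$ (as $r$ is a base-point of $f^{-n}$), so forward iterates of $C_d$ blow up to curves rather than stabilizing at $r$, requiring a delicate choice of a tower of blow-ups of toric points to track the exceptional loci in the bubble space and verify the required distinctness. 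The hypothesis simplifies this analysis since every $d \in D_2$ fixes $r$, rendering the $d$-dependence a controlled perturbation around a fixed toric center.
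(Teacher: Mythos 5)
Your first step (Lemma \ref{loxoconj} produces $n$ and a dense open set on which the commutator is loxodromic) matches the paper, but after that the argument has a genuine gap, and it is exactly the one you flag yourself. You want to contradict the bound $E(g_d^N)\leq K$ of Lemma \ref{boundedei} by exhibiting $N>K$ distinct curves contracted by a single power $g_d^N$, following the iterative scheme of Lemmas \ref{goodsetopen}, \ref{nontoriccontract} and \ref{lemmacontract}. That scheme depends essentially on the contracted curve being sent to a \emph{non-toric} point: only then does a generic $d\in D_2$ move the image point off the indeterminacy locus of the next factor, so that the forward orbit stays a point for $K$ steps and the backward orbit stays a curve. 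Under the hypothesis of this lemma the situation is the opposite: every non-toric curve contracted by any $f^n$ lands on a \emph{toric} point $r$, and $d(r)=r$ for every $d\in D_2$, so no choice of $d$ can prevent $r$ from being an indeterminacy point of the next map. As you note, the forward iterates of $C_d$ then blow back up into curves, so you never obtain $K+1$ curves contracted by one power of $g_d$, and the contradiction with Lemma \ref{boundedei} is not reached. The step ``a short case analysis yields $\dots$'' and the final paragraph are therefore placeholders for the actual content of the proof.

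The paper resolves this by a different and much shorter mechanism that exploits, rather than fights, the toric hypothesis. Since all non-toric curves contracted by $f^{\pm m}$ go to toric points, which are fixed by $D_2$, the conjugate $f^m d^k f^{-m}$ contracts only (strict transforms of) coordinate lines, for every $k\in\Z$. Hence $d^{-1}f^m d f^{-m}$ restricts to an automorphism of $\p^2\setminus B$ where $B$ is a union of at most three coordinate lines, and one argues case by case: if $B$ is one line the map is a loxodromic automorphism of $\A^2$, whose dynamical degree is an integer, whereas loxodromic monomial maps have irrational quadratic dynamical degrees, so it is not of monomial type; if $B$ is two lines it would be a loxodromic automorphism of $\A^1\times\A^1_*$, which does not exist; if $B$ is all three lines then $f^m D_2 f^{-m}$ lands in $\GL_2(\Z)\ltimes D_2$ and Lemmas \ref{uniquembeddtorus} and \ref{loxogllemma} force $f$ to be monomial. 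If you want to salvage your write-up, this restriction-to-an-open-toric-surface argument is the missing idea; the curve-counting strategy is the right tool for Lemma \ref{mainlemmalox} but not here.
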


\begin{proof}
	Assume that $f$ is not monomial. Then $f^n$ is not monomial for all $n\neq 0$, by Lemma \ref{loxogllemma}.
	By Lemma \ref{loxoconj} there exists a dense open set $U\subset D_2$ and an $m\in\Z$ such that $df^md^{-1}f^{-m}$ is loxodromic for all $d\in U$. 
	
	All curves that are not contained in the toric boundary and that are contracted by $f^m$ are contracted to toric points in $\mathcal{B}(\p^2)$ and these are fixed by diagonal automorphisms. Hence, for all $d\in U$ the map $(f^mdf^{-m})^n=f^md^nf^{-m}$ contracts only toric curves for all $n\in\Z$. Denote by $B\subset\partial\p^2$ the union of all the coordinate lines that are contracted by $f^md^nf^{-m}$ for some $n\in\Z$. We observe that $f^md^nf^{-m}$ is an isomorphism on $\p^2\setminus B$. As $df^md^{-1}f^{-m}$ is loxodromic, the map $f^mdf^{-m}$ can not be an automorphism of $\p^2$. If $B$ consists of one line, then $f^mdf^{-m}$ and $d^{-1}f^mdf^{-m}$ are automorphisms of $\A^2$. Since the dynamical degree of an element in $\aut(\A^2)$ is always an integer, it follows that $d^{-1}f^mdf^{-m}$ is not of monomial type. If $B$ is the union of two coordinate lines, then $f^mdf^{-m}$ and $d^{-1}f^mdf^{-m}$ are automorphisms of $\A^1\times\A_*^1$. The $\A^1$-fibration of $\A^1\times\A_{*}^1$ is given by the invertible functions on $\A^1\times\A^1_*$, so automorphisms of  $\A^1\times\A^1_*$ preserve this $\A^1$-fibration. In particular, $\A^1\times\A^1_*$ does not admit any loxodromic automorphism which implies that this case does not occur. Finally, if $B$ is the union of all the three coordinate lines, then $fdf^{-1}$ is an automorphism of $\A^1_*\times\A_*^1$, i.e.\,a monomial map. By Lemma \ref{loxogllemma}, the transformation $f$ is monomial.
\end{proof}

\subsection{Proof of Theorem \ref{loxomain}}\label{proofofloxomain} Let $G\subset\Cr_2(\C)$ be a simple subgroup and assume that $G$ contains loxodromic elements. By Lemma \ref{sheplemma}, all loxodromic elements are  of monomial type. Assume that $G$ contains a loxodromic element $m$. After conjugation we may therefore assume that $m$ is monomial. From Lemma \ref{mainlemmalox} it follows that all the curves contracted by elements of $G$ are toric, and hence Lemma~\ref{onlytoric} implies that all loxodromic elements of $G$ are contained in $\GL_2(\Z)\ltimes D_2$. Let $h\in G$ be an arbitrary element. Since $hgh^{-1}$ is loxodromic, it is monomial. By Lemma \ref{loxogllemma}, $h$ is contained in $\GL_2(\Z)\ltimes D_2$ as well. Therefore $G\subset \GL_2(\Z)\ltimes D_2$ and we obtain a non-trivial homomorphism $\varphi\colon G\to\GL_2(\Z)$ whose kernel contains $\Delta_2$ - a contradiction to $G$ being simple. Therefore, $G$ does not contain any loxodromic element.
\qed

\section{Proof of Theorem \ref{mainthm} and Theorem \ref{mainsimple}}

\subsection{The parabolic and elliptic case}

\begin{lemma}\label{paracase}
	let $G\subset\Cr_2(\C)$ be a simple subgroup that contains no loxodromic element, but a parabolic element. Then $G$ is conjugate to a subgroup of the de Jonqui\`eres group and $G$ is isomorphic to a subgroup of $\PGL_2(\C)$.
\end{lemma}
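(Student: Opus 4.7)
The plan is to combine two earlier results. Since $G$ is simple, Theorem \ref{loxomain} ensures that $G$ contains no loxodromic element; together with the hypothesis that $G$ contains a parabolic element, this puts us exactly in the situation of Lemma \ref{noloxfibration}. That lemma gives two possibilities: $G$ is conjugate either to a subgroup of $\J$, or to a subgroup of $\aut(Y)$ for some Halphen surface $Y$. My goal is to rule out the second possibility and then analyze the first.

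To rule out the Halphen case, I would begin by noting that $G$ is infinite: a parabolic isometry of $\h(\p^2)$ has no fixed point there and hence has infinite order, so the parabolic element of $G$ itself has infinite order. Suppose now that $G \subset \aut(Y)$ for a Halphen surface $Y$. Theorem \ref{halphenstructure} supplies a homomorphism $\rho \colon \aut(Y) \to \PGL_2(\C)$ with finite image and with solvable kernel (an extension of an abelian group by a cyclic group). The intersection $N := G \cap \ker(\rho)$ is normal in $G$, so simplicity forces $N = \{1\}$ or $N = G$. If $N = \{1\}$, then $\rho|_G$ injects $G$ into the finite group $\rho(\aut(Y))$, making $G$ finite. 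If $N = G$, then $G$ is a simple solvable group, hence trivial or cyclic of prime order, again finite. Both options contradict that $G$ is infinite, so this case does not occur and $G$ must be conjugate to a subgroup of $\J$.

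In the remaining case $G \subset \J \simeq \PGL_2(\C) \ltimes \PGL_2(\C(t))$, the projection onto the first factor yields a homomorphism $p \colon G \to \PGL_2(\C)$ whose kernel $K = G \cap \PGL_2(\C(t))$ is normal in $G$. Simplicity gives $K = \{1\}$ or $K = G$. In the first case $p$ is injective and $G$ embeds into $\PGL_2(\C)$. In the second case $G \subset \PGL_2(\C(t))$; since $\overline{\C(t)}$ and $\C$ are algebraically closed fields of characteristic $0$ and cardinality the continuum, they are isomorphic as abstract fields, so $\C(t)$ embeds into $\C$ and therefore $\PGL_2(\C(t)) \hookrightarrow \PGL_2(\C)$, yielding the desired embedding.

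I expect no real obstacle here: the heavy lifting was done in Section \ref{loxosection} to eliminate loxodromic elements, and what remains is a clean assembly of Lemma \ref{noloxfibration}, Theorem \ref{halphenstructure}, and the semidirect-product structure of the de Jonqui\`eres group, with the infiniteness of $G$ (forced by the presence of a parabolic element) playing the role of the key leverage against the Halphen alternative.
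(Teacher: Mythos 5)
Your proof is correct and follows essentially the same route as the paper: Lemma \ref{noloxfibration} to reduce to the Halphen or de Jonqui\`eres alternative, Theorem \ref{halphenstructure} plus simplicity and infiniteness of $G$ to exclude the Halphen case, and the semidirect-product structure of $\J$ to embed $G$ into $\PGL_2(\C)$. Your version merely spells out a few details the paper leaves implicit (the infinite order of a parabolic element, the case $N=G$ via solvability, and the field embedding $\C(t)\hookrightarrow\C$), all of which are correct.
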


\begin{proof}
	By Lemma \ref{noloxfibration}, we know that $G$ is either conjugate to a subgroup of the automorphism group of a Halphen surface or to a subgroup of the de Jonqui\`eres subgroup $\J$. By Theorem \ref{halphenstructure}, automorphism groups of Halphen surfaces are finite extensions of abelian subgroups. It follows that the automorphism group of a Halphen surface does not contain infinite simple subgroups. Therefore, $G$ is conjugate to a subgroup of $\J$. Let 
	\[
	1\to\PGL_2(\C(t))\to\J\to\PGL_2(\C)\to 1
	\]
	be the short exact sequence from the semi-direct product structure of $\J$. Since $G$ is simple, it is either contained in the kernel or the image of $\varphi$. In both cases it is isomorphic to a subgroup of $\PGL_2(\C)$.
\end{proof}

\begin{lemma}\label{ellcase}
	Let $G\subset\Cr_2(\C)$ be a simple subgroup of elliptic elements. Then either $G$ is a subgroup of an algebraic group in $\Cr_2(\C)$ or $G$ is conjugate to a subgroup of the de Jonqui\`eres group $\J$. 
\end{lemma}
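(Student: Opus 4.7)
The plan is to apply Theorem~\ref{ellipticelements} directly to $G$ and treat each of the three cases it produces. In case~(1), $G$ is conjugate to a subgroup of an algebraic subgroup of $\Cr_2(\C)$, which gives the first alternative of the lemma at once (since the class of algebraic subgroups is stable under conjugation). In case~(2), $G$ preserves a rational fibration; by the Noether--Enriques theorem recalled in the ``Groups preserving a fibration'' subsection, any such subgroup is conjugate to a subgroup of $\J$, which yields the second alternative.

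The delicate situation is case~(3), where the theorem only guarantees that $G$ is a torsion group abstractly isomorphic to a subgroup of an algebraic group, and one must upgrade this abstract isomorphism to a genuine inclusion inside an algebraic subgroup of $\Cr_2(\C)$. Here the simplicity hypothesis is essential. Since algebraic groups over $\C$ are linear, $G$ embeds into $\GL_n(\C)$ for some $n$ as a periodic group. By the Jordan--Schur theorem, every periodic subgroup of $\GL_n(\C)$ admits an abelian normal subgroup of finite index. Simplicity then forces this abelian normal subgroup to be either trivial or the whole of $G$, so $G$ is either $\Z/p\Z$ or a non-abelian finite simple group, and in both cases $G$ is finite. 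Every finite subgroup of $\Cr_2(\C)$ regularises on a smooth projective rational surface $S$ and therefore sits inside the algebraic subgroup $\aut(S)\subset\Cr_2(\C)$, giving the first alternative.

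The main obstacle is precisely case~(3): Theorem~\ref{ellipticelements} only provides an abstract isomorphism with a subgroup of an algebraic group, so the hard part is ruling out the existence of a simple infinite periodic subgroup via Jordan--Schur and then invoking regularisability of finite birational actions to obtain an honest geometric inclusion into an algebraic subgroup of $\Cr_2(\C)$.
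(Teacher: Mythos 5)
Your proposal is correct and follows essentially the same route as the paper: invoke Theorem~\ref{ellipticelements}, dispose of cases (1) and (2) immediately, and in the torsion case use linearity of algebraic subgroups together with Jordan--Schur and simplicity to conclude that $G$ is finite, hence algebraic. Your write-up merely spells out in more detail the two steps the paper compresses into ``finite and therefore algebraic,'' namely that simplicity forces the Jordan--Schur abelian normal subgroup to be trivial or everything, and that a finite subgroup regularises and so is an algebraic subgroup.
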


\begin{proof}
	Let $G\subset\Cr_2(\C)$ be a simple subgroup of elliptic elements. If $G$ is a subgroup of an algebraic group or if $G$ is conjugate to a subgroup of the de Jonqui\`eres group, we are done. So by Theorem \ref{ellipticelements}, it only remains to consider the case where $G$ is a torsion group. In this case, $G$ is isomorphic to a subgroup of an algebraic group, by Theorem \ref{ellipticelements}, and as such it is a linear group. The Theorem of Jordan and Schur implies that $G$ has a normal abelian subgroup of finite index. This implies that $G$ is finite and therefore algebraic.
\end{proof}

\subsection{Proofs} We have now gathered all the results to prove Theorem \ref{mainsimple} and Theorem~\ref{mainthm}:

\begin{proof}[Proof of Theorem \ref{mainsimple}]
	The first statement of the Theorem is proven in Theorem~\ref{loxomain}, the second statement of Theorem~\ref{mainsimple} is proven in Lemma~\ref{paracase} and the third statement in Lemma~\ref{ellcase}.
\end{proof}

\begin{proof}[Proof of Theorem \ref{mainthm}]
	Let $G$ be a simple group acting non-trivially on a complex rational surface $S$. 
	If $S$ is rational it follows from the classification of maximal algebraic groups (Theorem \ref{maxsubgroups}) and Theorem \ref{mainsimple} that $G$ is isomorphic to a subgroup of $\PGL_3(\C)$. 
	If $S$ is  non-rational the proof follows from Lemma \ref{negativekodaira} and Lemma~\ref{nonnegativekodaira}.
	
	On the other hand, if $S$ is rational, then $\PGL_3(\C)=\aut(\p^2)$ is a subgroup of $\Bir(S)$, and in particular, every simple subgroup of $\PGL_3(\C)$ acts by birational transformations on $S$. If $G$ is isomorphic to a subgroup of $\PGL_2(\C)$, then it acts non-trivially by birational transformations on the surface $\p^1\times C$ for all curves $C$. For every finite group $G$ there exists a curve of general type $C$ such that $\aut(C)=G$. Hence, $G$ acts non-trivially by birational transformations on the non-rational surface of negative Kodaira dimension $\p^1\times C$ as well as on the surface of non-negative Kodaira dimension $C\times C$.  
\end{proof}

\section{Finitely generated subgroups}
In this section we prove Theorem \ref{fgsimple}.
The main advantage when working with a finitely generated group $\Gamma$, is that we can reduce modulo $p$ the coefficients needed to define the elements in $\Gamma$. We start by  explaining this construction and will then apply it in a second step to our problem.
The following lemma is well known. A proof can be found for example in \cite[Lemma 3.2]{nica2013linear}:

\begin{lemma}\label{nicalemma}
	Let $A$ be a finitely generated domain. The intersection of all maximal ideals of $A$ is $0$. Moreover, if $A$ is a field, then $A$ is finite.
\end{lemma}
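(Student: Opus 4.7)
My plan is to prove the second assertion first---it is a form of Zariski's version of the Nullstellensatz for finitely generated rings---and then deduce the first by a localization trick.

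For the second assertion, I assume $A$ is a finitely generated domain that is also a field and split on the characteristic. If $\car(A)=p>0$, then $A$ contains $\F_p$ and is a finitely generated $\F_p$-algebra that happens to be a field, so Zariski's lemma (the algebraic form of the Nullstellensatz) gives that $A$ is a finite algebraic extension of $\F_p$, hence finite. If $\car(A)=0$, then $A$ contains $\Q$ and is finitely generated as a $\Z$-algebra, hence also as a $\Q$-algebra, and Zariski's lemma gives that $A$ is a finite extension of $\Q$. Writing $A=\Z[a_1,\ldots,a_n]$, I clear denominators in the minimal polynomials of the $a_i$ over $\Q$ to produce an integer $N$ such that each $a_i$ is integral over $\Z[1/N]$; since $\Q\subset A$ contains $\Z[1/N]$, this forces $A$ to be integral over $\Z[1/N]$. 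But then the subfield $\Q\subset A$ would be integral over $\Z[1/N]$, contradicting the fact that the PID $\Z[1/N]$ is integrally closed in its field of fractions $\Q$ and is strictly smaller than $\Q$.

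For the first assertion, let $a\in A$ be nonzero. The localization $A[1/a]$ is again a finitely generated domain, and Zorn's lemma supplies a maximal ideal $\mathfrak{n}\subset A[1/a]$. The quotient $A[1/a]/\mathfrak{n}$ is a finitely generated domain that is a field, hence finite by the preceding step. Setting $\mathfrak{p}\coloneqq\mathfrak{n}\cap A$, the natural map realizes an injection $A/\mathfrak{p}\hookrightarrow A[1/a]/\mathfrak{n}$, so $A/\mathfrak{p}$ is a finite integral domain, hence a field, and $\mathfrak{p}$ is maximal. Since $a$ is invertible in $A[1/a]$, its image in $A[1/a]/\mathfrak{n}$ is nonzero, so $a\notin\mathfrak{p}$. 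Thus every nonzero element of $A$ avoids some maximal ideal, which is the desired statement.

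The most delicate step is the characteristic zero half of the second assertion: Zariski's lemma alone only yields that $A$ is a finite extension of $\Q$, and one still needs the separate observation that the PID $\Z[1/N]$ is integrally closed in its field of fractions $\Q$---and strictly smaller than it---to derive the contradiction.
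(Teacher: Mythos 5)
Your proof is correct, and it follows the standard argument: the paper does not prove this lemma itself but cites \cite[Lemma 3.2]{nica2013linear}, where essentially the same route is taken (Zariski's lemma to show a finitely generated ring that is a field is finite, with the characteristic-zero case excluded via integrality over $\Z[1/N]$ and the fact that $\Z[1/N]$ is integrally closed and strictly smaller than $\Q$, then localization at a nonzero element to produce a maximal ideal avoiding it). Nothing further is needed.
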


The following proposition shows how Lemma \ref{nicalemma} can be applied to obtain information about the structure of subgroups of $\Cr_2(\kk)$ for any field $\kk$. A similar statement has already been proved and applied by de Cornulier in order to show that the Cremona group is sofic (\cite{MR3160544}, see also \cite{cantatremark}). 

\begin{proposition}\label{redmodp}
	Let $\kk$ be a field and let $\Gamma\subset\Cr_n(\kk)$ be a finitely generated subgroup. Then there exists a finite field $\mathbb{F}$ and a non-trivial group-homomorphism $\varphi\colon\Gamma\to\Cr_n(\mathbb{F})$ that satisfies $\deg(\varphi(f))\leq\deg(f)$ for each $f\in\Gamma$.
\end{proposition}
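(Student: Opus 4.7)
The plan is to produce $\varphi$ by reducing coefficients modulo a suitable maximal ideal of a finitely generated subring of $\kk$. Fix a finite generating set $g_1, \ldots, g_r$ of $\Gamma$, and represent each $g_i$ by its unique normalized form $[f_0^{(i)}:\cdots:f_n^{(i)}]$ with $f_j^{(i)} \in \kk[x_0,\ldots,x_n]$ homogeneous of degree $\deg g_i$ and without common factor; represent $g_i^{-1}$ analogously by $[h_0^{(i)}:\cdots:h_n^{(i)}]$. Let $A_0 \subset \kk$ be the subring generated by all coefficients of the $f_j^{(i)}$ and $h_j^{(i)}$; it is finitely generated over $\Z$. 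The identity $g_i \circ g_i^{-1} = \id$ in $\Cr_n(\kk)$ translates into polynomial identities
\[
f_j^{(i)}(h_0^{(i)},\ldots,h_n^{(i)}) = p^{(i)} \cdot x_j
\]
in $A_0[x_0,\ldots,x_n]$, where $p^{(i)} \in A_0[x_0,\ldots,x_n]$ is a non-zero common-factor polynomial. I would then enlarge $A_0$ to a finitely generated subring $A$ by inverting one fixed non-zero coefficient of each polynomial $f_j^{(i)}$, $h_j^{(i)}$, and $p^{(i)}$, and, since $\Gamma$ is non-trivial, also a coefficient witnessing that some generator $g_{i_0}$ differs from the identity, say a non-zero coefficient of $f_j^{(i_0)} x_l - f_l^{(i_0)} x_j$ for some $j, l$.

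By Lemma \ref{nicalemma}, $A$ admits a maximal ideal $\mathfrak{m}$ whose residue field $\F := A/\mathfrak{m}$ is finite. Coefficient-wise reduction produces polynomials $\bar f_j^{(i)}, \bar h_j^{(i)} \in \F[x_0,\ldots,x_n]$, each with at least one non-zero coefficient, so each tuple defines a rational self-map of $\p^n_\F$ of degree at most that of its lift. Reducing the relations $f_j^{(i)}(h_0^{(i)},\ldots,h_n^{(i)}) = p^{(i)}\cdot x_j$ modulo $\mathfrak{m}$, and noting that $\bar p^{(i)} \neq 0$ (we inverted a coefficient of $p^{(i)}$), one concludes that the reduced tuples are inverses of one another in $\Cr_n(\F)$, hence honest birational transformations $\bar g_i$ and $\overline{g_i^{-1}}$.

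The crucial step is to show that $g_i \mapsto \bar g_i$ extends to a group homomorphism $\varphi \colon \Gamma \to \Cr_n(\F)$. The key observation is that $\Cr_n(\kk)$ is the group of $\kk$-algebra automorphisms of $\kk(x_1,\ldots,x_n)$, so any relation $w(g_1,\ldots,g_r) = \id$ in $\Gamma$ is equivalent to an identity of rational functions obtained by iterated substitution of the defining data of the generators. These identities live already in $K(x_1,\ldots,x_n)$ with $K = \operatorname{Frac}(A)$, and reduce modulo $\mathfrak{m}$ to identities over $\F$ provided no denominator appearing during the iterated substitution reduces to zero. Since each generator's denominators contain a unit coefficient of $A$ and $\F[x_1,\ldots,x_n]$ is an integral domain, the denominator of any finite composition has non-zero reduction, and reduction commutes with the substitution. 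The resulting $\varphi$ is non-trivial by our choice of $A$, and $\deg \varphi(f) \leq \deg f$ is automatic: reducing coefficients cannot raise the degree of a polynomial representative, and cancellation of any common factor appearing after reduction only decreases the degree further. The main technical obstacle is this denominator-tracking in compositions; I would handle it either by induction on word length in the generators, or by isolating a "specializable" subgroup of $\Cr_n(K)$ cut out by the unit-denominator-coefficient condition and directly verifying its stability under composition and inversion.
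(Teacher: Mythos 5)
Your overall strategy is the paper's: assemble the finitely many coefficients needed to define the generators, their inverses, the inversion identities $f_j^{(i)}(h^{(i)})=p^{(i)}x_j$ and a witness of non-triviality into a finitely generated domain, and reduce modulo a maximal ideal with finite residue field supplied by Lemma \ref{nicalemma}. Localizing at the chosen coefficients is interchangeable with the paper's device of picking a maximal ideal avoiding the product $r$ of all relevant coefficients, your degree estimate is the same, and your non-triviality witness (a surviving coefficient of $f_j^{(i_0)}x_l-f_l^{(i_0)}x_j$, forcing $\bar g_{i_0}\neq\id$) is in fact cleaner than the paper's, which only arranges $\varphi(g_1)\neq\varphi(g_2)$.

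The step you defer --- that $g_i\mapsto\bar g_i$ respects \emph{all} relations, not just $g_ig_i^{-1}=\id$ --- is the real content, and as written it has a gap. Working in the function field and tracking denominators, your justification that ``each generator's denominators contain a unit coefficient and $\F[x_1,\dots,x_n]$ is a domain'' does not suffice: the denominator produced by a composition is a nonzero polynomial evaluated on the coordinate functions of the reduced \emph{inner} map, and its nonvanishing requires that this reduced map be dominant. You have that fact available (each $\bar g_i$ is birational) but do not invoke it, and without it the induction on word length does not close. The paper sidesteps the bookkeeping entirely by lifting each generator to a homogeneous endomorphism $G_i$ of $\A^{n+1}$: composition is then denominator-free polynomial substitution, which commutes with coefficient reduction on the nose, and a relation $g_{i_1}\cdots g_{i_l}=\id$ becomes $G_{i_1}\circ\cdots\circ G_{i_l}=(Qx_0,\dots,Qx_n)$, whose reduction is of the same shape (that $\psi(Q)\neq0$ again comes from dominance of the reduced maps, so the composed tuple cannot vanish identically). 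If you homogenize in this way, your argument completes; in the affine formulation you must explicitly carry the dominance of the reduced generators through the induction.
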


\begin{proof}
	Let $g_1,\dots, g_l\in\Gamma$ be a symmetric set of generators. We may assume that $g_i\neq\id$ for all $i$. Fix homogeneous polynomials $G_{ij}\in\C[x_0, \dots, x_n]$ such that $g_i=[G_{i0}:\dots:G_{in}]$ and define the endomorphisms  $G_i\coloneqq (G_{i0},\dots, G_{in})\in\edo(\A^{n+1})$. Assume that $g_i^{-1}=g_j$ and let
	\[
	F_{i}\coloneqq G_i\circ G_j =(F_{i0},\dots, F_{in})\in\A^{n+1}.
	\]
	Note that $g_i\circ g_j=[F_{i0}:\dots:F_{in}]=[x_0:\dots:x_n]$, i.e.\,$F_{ij}=P_ix_j$ for some homogeneous polynomial $P_i\in \C[x_0,\dots, x_n]$. 
	
	Let $T$ be the finite set of all non-zero coefficients that appear in the polynomials $G_{ij}$, the $F_{ij}$ or the polynomials $G_{1i}G_{2j}-G_{1j}G_{2i}$ and denote by $r$ the product of all elements of $T$. Let $A$ be the domain generated by the elements of $T$ and by $1$.  In particular, we may consider all our polynomials $G_{ij}$, $F_{ij}$ and $G_{1i}G_{2j}-G_{1j}G_{2i}$ to be elements of $A[x_0,\dots,x_n]$. By Lemma~\ref{nicalemma}, there exists a maximal ideal $I\subset A$ such that $r\notin I$. Reduction modulo $I$ yields a homomorphism $\pi\colon A\to\F$ for some finite field $\mathbb{F}$ such that $r$ and hence all elements in $T$ are not contained in the kernel of $\pi$. By reducing the coefficients modulo $I$, we obtain a ring homomorphism $\psi\colon A[x_0,\dots,x_n]\to \F[x_0,\dots,x_n]$. Define the rational maps
	\[
	\varphi(g_i)=[\psi({G_{i0}}):\dots:\psi({G_{in}})].
	\]
	Note that \[
	\varphi(g_i)\circ\varphi(g_i^{-1})=[\psi(F_{i1}):\dots:\psi({F_{in}})]=[\psi(P_i)x_0:\dots:\psi(P_i)x_n]=\id,
	\]
	so $\varphi(g_i)$ is a birational transformation of $\p^n_{\F}$. Assume that $g_{i_1}g_{i_2}\cdots g_{i_l}=\id$ for some $1\leq i_1,\dots, i_l\leq k$. Then $G_{i_1}\circ\cdots\circ G_{i_l}=(Qx_0,\dots, Qx_n)$ for some homogeneous polynomial $Q$. It follows that $\varphi(g_{i_1})\varphi(g_{i_2})\cdots \varphi(g_{i_l})=[\psi(Q)x_0:\psi(Q)x_1:\dots:\psi(Q)x_n]=\id$. Therefore, the map $\varphi$ can be extended to a homomorphism of groups $\varphi\colon\Gamma\to\Cr_n(\F)$. By construction, at least one of the polynomials $\psi(G_{1i})\psi(G_{2j})-\psi(G_{1j})\psi(G_{2i})$ is not zero and hence $\varphi(g_1)\neq\varphi(g_2)$; in particular, $\varphi$ is not trivial. 
	
	Let $g=g_{i_1}g_{i_2}\cdots g_{i_l}\in\Gamma$. Then $g=[H_0:H_1:\dots:H_n]$, where $(H_0,\dots, H_n)=G_{i_1}\circ\cdots\circ G_{i_l}$. We then have $\varphi(g)=[\psi(H_0):\psi(H_1):\dots:\psi(H_n)]$. This shows that $\deg(\varphi(g))\leq\deg(g)$.
\end{proof}

Together with Theorem \ref{sheprel} we obtain the following result:

\begin{proposition}\label{mainfg}
	Let $\kk$ be an algebraically closed field and let $\Gamma\subset\Cr_2(\kk)$ be a finitely generated subgroup. If $\Gamma$ contains a loxodromic element, then $\Gamma$ is not simple.
\end{proposition}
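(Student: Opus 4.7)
The plan is to argue by contradiction: assume $\Gamma\subset\Cr_2(\kk)$ is simple, finitely generated, and contains a loxodromic element $g$. I will combine Theorem~\ref{smallcanc}, the positive-characteristic structural theorem stated just before the proposition, and the reduction-modulo-$p$ homomorphism of Proposition~\ref{redmodp} to produce an infinite subgroup of $\Gamma$ whose image in $\Cr_2(\F)$ is forced to be finite.

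Since $\Gamma$ is simple, Theorem~\ref{smallcanc} rules out the existence of any tight power of $g$ in $\Gamma$: otherwise the normal closure of $g^n$ in $\Gamma$ for $n\geq 2$ would be a proper nontrivial normal subgroup. The positive-characteristic analogue of Theorem~\ref{sheprel}, stated immediately before Proposition~\ref{mainfg}, then supplies an infinite subgroup $A\subset\Gamma$ of elliptic elements, normalized by $g$, with $A$ conjugate in $\Cr_2(\kk)$ to a subgroup of $D_2$ or of $\kk^2$. Choosing $h\in\Cr_2(\kk)$ with $hAh^{-1}\subset D_2$ (respectively $\kk^2$), every element of $hAh^{-1}$ has degree one; by sub-multiplicativity of the degree under composition, every $a\in A$ satisfies $\deg(a)\leq\deg(h)\deg(h^{-1})=:C$.

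Next I apply Proposition~\ref{redmodp} to the finitely generated group $\Gamma$ to obtain a finite field $\F$ and a nontrivial homomorphism $\varphi\colon\Gamma\to\Cr_2(\F)$ with $\deg(\varphi(f))\leq\deg(f)$ for every $f\in\Gamma$. Simplicity of $\Gamma$ forces $\ker(\varphi)$ to be trivial, so $\varphi$ is injective; in particular it embeds $A$ into $\Cr_2(\F)$ with all images of degree at most $C$. But $\F[x,y,z]$ contains only finitely many triples of homogeneous polynomials of degree at most $C$ modulo a common scalar, so the set of elements of $\Cr_2(\F)$ of degree at most $C$ is finite. This contradicts the injectivity of $\varphi$ on the infinite group $A$, and shows that $\Gamma$ cannot have been simple.

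The main subtlety of the argument is ensuring that the subgroup $A$ produced by the structural theorem really lies inside $\Gamma$ itself, rather than merely inside $\Cr_2(\kk)$; this parallels the explicit inclusion $\Delta_2^g\subset G$ in Lemma~\ref{sheplemma} and is built into the construction of $A$ in the Shepherd-Barron/Lonjou setting. Once this point is secured, everything else is a clean marriage of the reduction-mod-$p$ trick of Proposition~\ref{redmodp} with the elementary fact that only finitely many bounded-degree Cremona transformations exist over a finite field.
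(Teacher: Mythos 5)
Your proposal is correct and follows essentially the same route as the paper: rule out tight powers via Theorem~\ref{smallcanc}, extract from the Shepherd-Barron/Lonjou structure theorem an infinite bounded-degree subgroup of $\Gamma$ normalized by the loxodromic element, and then use the reduction-mod-$p$ homomorphism of Proposition~\ref{redmodp} together with the finiteness of bounded-degree transformations over a finite field to force a proper nontrivial kernel. The only cosmetic difference is that you phrase the last step as ``the kernel must be trivial, contradicting infiniteness of $A$'' while the paper phrases it as ``the kernel is proper and nontrivial''; the content is identical, and your explicit degree bound $\deg(a)\leq\deg(h)\deg(h^{-1})$ just spells out what the paper leaves implicit.
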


\begin{proof}
	Let $f\in\Gamma$ be a loxodromic element. If there exists a $n$ such that $f^n$ is tight in $\Gamma$, the group $\Gamma$ is not simple by Theorem \ref{smallcanc} and we are done. If no power of $f$ is tight, it follows from Theorem \ref{sheprel} that $\Gamma$ contains an infinite subgroup $\Delta_2$ that is normalized by $f$ and that is conjugate either  to a subgroup of $D_2$ or to a subgroup of $\kk^2$. The group $\Delta_2$ being conjugate to a subgroup of $D_2$ or $\kk^2$ implies in particular that the degrees of the elements in $\Delta_2$ are uniformly bounded by an integer $K$. By Proposition \ref{redmodp}, there exists a finite field $\F$ and a non-trivial group homomorphism $\varphi\colon\Gamma\to\Cr_2(\mathbb{F})$ that satisfies $\deg(\varphi(f))\leq\deg(f)$. In $\Cr_2(\mathbb{F})$ there exist only finitely many elements of degree $\leq K$, hence the image $\varphi(\Delta_2)$ is finite. It follows that $\varphi$ has a proper kernel and therefore that $\Gamma$ is not simple.
\end{proof}

We are now able to prove Theorem \ref{fgsimple} using the same strategy as in the proof of Theorem \ref{mainsimple}.

\begin{lemma}\label{fgjonq}
	Let $C$ be a curve over an algebraically closed field $\kk$ and $\Gamma\subset\Bir(\p^1\times C)$ be a finitely generated simple subgroup that preserves the $\p^1$-fibration given by the projection to $C$. Then $\Gamma$ is finite.
\end{lemma}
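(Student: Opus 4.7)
The plan is to exploit the fibered structure of $\Bir(\p^1\times C,\pi)$ together with Malcev's theorem on linear groups. Any element of $\Bir(\p^1\times C)$ preserving the projection $\pi$ to $C$ descends to a birational (hence regular) self-map of the curve $C$, and the kernel of this descent consists of maps acting trivially on the base; such a map restricts to an element of the generic-fiber group $\PGL_2(\kk(C))$. This produces a short exact sequence
\[
1 \longrightarrow \PGL_2(\kk(C)) \longrightarrow \Bir(\p^1\times C,\pi) \longrightarrow H \longrightarrow 1
\]
with $H$ a subgroup of $\aut(C)$.

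Applying simplicity of $\Gamma$ to the composition $\Gamma\hookrightarrow \Bir(\p^1\times C,\pi)\to H$, either $\Gamma$ is contained in $\PGL_2(\kk(C))$, or $\Gamma$ injects into $H\subseteq\aut(C)$. In the first case, since $\PGL_2(\kk(C))$ is a linear group over the field $\kk(C)$, Malcev's theorem (the \emph{property of Malcev} recalled in the introduction) implies that the finitely generated subgroup $\Gamma$ is residually finite. As an infinite simple group admits no proper finite-index normal subgroup, residual finiteness together with simplicity forces $\Gamma$ to be finite.

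In the second case, I would split according to the genus of $C$. If $g(C)\geq 2$, then $\aut(C)$ is itself finite. If $C=\p^1$, then $\aut(C)=\PGL_2(\kk)$ is linear, so Malcev applies again and $\Gamma$ is finite. If $C$ is of genus $1$, then $\aut(C)$ sits in a short exact sequence
\[
1\longrightarrow C(\kk) \longrightarrow \aut(C) \longrightarrow F \longrightarrow 1
\]
with $F$ a finite group; here $N\coloneqq\Gamma\cap C(\kk)$ is a normal subgroup of $\Gamma$, and simplicity forces either $N=\{1\}$, so that $\Gamma$ embeds into the finite group $F$, or $N=\Gamma$, in which case $\Gamma$ is a finitely generated simple abelian group and hence cyclic of prime order.

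The principal conceptual step is the identification of the extension structure of $\Bir(\p^1\times C,\pi)$; once this is in place, the argument reduces to a routine invocation of Malcev's theorem on $\PGL_2(\kk(C))$ and to the classical description of $\aut(C)$. I do not expect any genuine obstacle beyond verifying that the induced descent to $\aut(C)$ is indeed well-defined on birational self-maps, which is automatic because any birational self-map of a smooth curve is biregular.
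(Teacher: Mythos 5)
Your proof is correct and takes essentially the same route as the paper: the semidirect-product structure of the fibration-preserving group, simplicity forcing $\Gamma$ to embed either into $\PGL_2(\kk(C))$ or into $\aut(C)$, and Malcev's theorem (residual finiteness of finitely generated linear groups) combined with simplicity to conclude finiteness. The only divergence is in the $\aut(C)$ case, where the paper invokes the Malcev property for automorphism groups of schemes (Bass--Lubotzky, Corollary 1.2 of the cited reference) while you give an elementary genus-by-genus analysis; both are valid, yours being more self-contained and the paper's more uniform.
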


\begin{proof}
	Since $\Gamma$ is simple, it is either isomorphic to a subgroup of $\PGL_2(\kk(C))$ or to a subgroup of $\aut(C)$. Since both, $\PGL_3(\kk(C))$ and $\aut(C)$  satisfy the property of Malcev by (\cite[Corollary 1.2]{MR693651}), the group $\Gamma$ is finite.  
\end{proof}

\begin{proof}[Proof of Theorem \ref{fgsimple}]
	Let $\overline{\kk}$ be the algebraic closure of $k$. Since $\Bir(S_{\kk})\subset \Bir(S_{\overline{\kk}})$, it is enough to show the statement for algebraically closed fields. 
	
	First assume that our surface $S$ is rational. By Proposition \ref{mainfg}, $\Gamma$ does not contain any loxodromic element. If $\Gamma$ contains a parabolic element, then $\Gamma$ is conjugate to a subgroup of the de Jonqui\`eres group $\J\simeq\PGL_2(\C(t))\rtimes\PGL_2(\C)$ or to a subgroup of the automorphism group $\aut(X)$ of a Halphen surface $X$. This last case is not possible by the property of Malcev for automorphism groups (\cite[Corollary 1.2]{MR693651}). If $\Gamma$ is a subgroup of $\J$, the claim follows with Lemma \ref{fgjonq}. If all elements in $\Gamma$ are elliptic, the claim follows from Theorem \ref{cantatelliptic}. In the first case, $\Gamma$ is finite by Lemma \ref{fgjonq}. As for the second case we recall that algebraic subgroups of $\Cr_2(\kk)$ are always linear. Hence $\Gamma$ is linear and therefore finite, since linear groups satisfy the property of Malcev.
	
	If $S$ is a non-rational ruled surface, the statement follows from Lemma \ref{fgjonq}. If $S$ is non-rational and not ruled, it has a unique minimal model $S'$ (see \cite[Corollary~10.22]{MR1805816}). Hence $\Gamma$ is conjugate to a subgroup of $\Bir(S')=\aut(S')$ and is therefore finite by the property of Malcev for automorphism groups (\cite[Corollary~1.2]{MR693651}). 
\end{proof}

\bibliographystyle{amsalpha}
\bibliography{/Users/christianurech/Dropbox/Literatur/bibliography_cu}

\end{document}